\numberwithin{equation}{section}
\newtheorem{theorem}{Theorem}[section]
\newtheorem*{theorem*}{Theorem}
\newtheorem{proposition}[theorem]{Proposition}
\newtheorem{problem}[theorem]{Problem}
\newtheorem*{proposition*}{Proposition}
\newtheorem{lemma}[theorem]{Lemma}
\newtheorem*{lemma*}{Lemma}
\newtheorem{corollary}[theorem]{Corollary}
\newtheorem*{corollary*}{Corollar}
\newtheorem*{fact*}{Fact}
\theoremstyle{definition}
\newtheorem{definition}[theorem]{Definition}
\newtheorem*{definition*}{Definition}
\newtheorem{claim}[theorem]{Claim}
\newtheorem*{claim*}{Claim}
\newtheorem*{conjecture*}{Conjecture}
\newtheorem{theoremi}{Theorem}
\newtheorem{corollaryi}[theoremi]{Corollary}
\newtheorem{example}[theorem]{Example}
\newtheorem*{example*}{Example}
\newtheorem{remark}[theorem]{Remark}
\newtheorem*{remark*}{Remark}
\newtheorem*{note*}{Note}
\newtheorem*{question*}{Question}
\newcommand {\M}{\mathbb M}
\newcommand{\eps}{\varepsilon}
\newcommand{\cD}{\mathcal{D}}
\newcommand{\cU}{\mathcal{U}}
\newcommand{\N}{\mathbb{N}}
\newcommand{\R}{\mathbb{R}}
\newcommand{\Lip}{\mathrm{Lip}}
\newcommand{\co}{\text{cof}}
\newcommand{\Exp}{\mathrm{Exp}}
\begin{document}

 \title{Asymptotic coarse Lipschitz equivalence}
 
\author[B. M. Braga]{Bruno M. Braga}
\address[B. M. Braga]{IMPA, Estrada Dona Castorina 110, 22460-320, Rio de Janeiro, Brazil}
\email{demendoncabraga@gmail.com}
\urladdr{https://sites.google.com/site/demendoncabraga/}
\thanks{B. M. Braga was partially supported by NSF Grant   DMS 2054860.}

\author[G. Lancien]{Gilles Lancien}
\address[G. Lancien]{ Universit\'e de Franche-Comt\'e, Laboratoire de math\'ematiques de Besan\c con, CNRS UMR-6623, 16 route de Gray, 25000 Besan\c con, France}
\email{gilles.lancien@univ-fcomte.fr}

\maketitle 

\begin{abstract} We introduce the notion of asymptotic coarse Lipschitz equivalence of metric spaces. We show that  it is strictly weaker than  coarse Lipschitz equivalence. We study its impact on the asymptotic dimension of metric spaces. Then we focus on Banach spaces. We prove that, for $2\le p<\infty$, being linearly isomorphic to $\ell_p$ is stable under asymptotic coarse Lipschitz equivalences. Finally, we establish a version of the Gorelik principle in this setting and apply it to prove the stability of various properties of asymptotic uniform smoothness of Banach spaces under asymptotic coarse Lipschitz equivalences.
\end{abstract}

\section{Introduction}\label{SectionIntro}

As Banach spaces are normed linear spaces, linear isometries    are the correct kind of equivalence if one wants to keep a complete track of all aspects of Banach spaces. After linear isometries,  linear isomorphisms are the next natural equivalence between Banach spaces which still keeps track of their linear structure; and this is  usually the notion of equivalence Banach space theorists are more concerned with. However, Banach spaces are in particular metric spaces, so (nonlinear) isometries and even  Lipschitz equivalences are also natural notions to consider. For a long time, understanding the minimal properties a certain notion of equivalence can have and still generate an interesting theory  has been the topic of intensive  study for researches in Banach space theory (e.g., \cite{MazurUlam1932,Enflo1970Israel,Ribe1976ArkMat,Ribe1984Israel,JohnsonLindenstraussSchechtman1996GAFA,Kalton2012MathAnn}). The first milestone in this area is probably the Mazur-Ulam's theorem, which says that (nonlinear) isometries between (real) Banach spaces which preserve zero are automatically linear (see \cite{MazurUlam1932}). In the 70's, P. Enflo showed that the linear structure of Hilbert spaces is determined by a much weaker notion: as long as a Banach space $X$ is uniformly equivalent to $\ell_2$,\footnote{I.e., there is a bijection $X\to \ell_2$ which is uniformly continuous and so is its inverse.} it must be (linearly) isomorphic to it (see \cite[Theorem 6.3.1]{Enflo1970Israel}). In the last few decades, researches have been interested in an even weaker notion, called \emph{coarse Lipschitz equivalence}, which is an equivalence that only takes into account large scale geometric aspects of the spaces (the precise definition will be given in the next paragraph). As we explain in details below, this paper concerns an even weaker notion of large scale equivalence between metric spaces which turns out to still be strong enough so that many linear aspects of Banach spaces can be recovered by it.

The large scale geometry of metric spaces is often regarded as the study of metric spaces by  observers positioned very far away from the objects of interest. In this sense, the local geometric aspects are not of interest and global aspects are the focus of the study. One of the main consequences of this approach is that different events happening at a uniformly bounded distance from each other should be treated as being ``morally'' the same. To formalize this, we have the concept of \emph{closeness} between maps into a metric space. Precisely, given a set $X$, a metric space $(Y,\partial)$, and maps $f,g:X\to Y$, we say that $f$ is \emph{close} to $g$, and write $f\sim g$, if 
\[\sup_{x\in X}\partial(f(x),g(x))<\infty.\]
So, $\sim$ is an equivalence relation between maps $X\to Y$ which weakens the relation of equality.  Moreover, if $(X,d)$ is also a metric space, then a function $f:X\to Y$ has a modulus of uniform continuity $\omega_f:[0,\infty)\to [0,\infty)$ given by 
\[\omega_f(t)=\sup\{\partial(f(x),f(x'))\mid d(x,x')\leq t\}\ \text{ for all } \ t\ge 0.\]
  The map $f$ is then called \emph{coarse Lipschitz} if $\omega_f$ is bounded above by an affine function, i.e., if there is $L>0$ such that \[\omega_f(t)\leq Lt+L \ \text{ for all }\ t\geq 0.\] The map $f$ is called a \emph{coarse Lipschitz equivalence} if it is coarse Lipschitz and there is a coarse Lipschitz map $g:Y\to X$ such that $g\circ f$ and $f\circ g$ are close to the identities $\mathrm{Id}_X$ and $\mathrm{Id}_Y$, respectively. A coarse Lipschitz equivalence is also often referred to as a \emph{quasi-isometry} in the literature; especially in geometric group theory.

The main focus of this paper is to study a weakening of the closeness relation as well as the large scale geometry induced by it. Moreover, our study will have a special focus on Banach spaces. Precisely, the following is the main novel definition in these notes.

\begin{definition}
Let $(X,d)$ and $(Y,\partial)$ be metric spaces. We say that maps $f,g:X\to Y$ are \emph{asymptotically close}, and write $f\sim_\infty g$ if, for some $x_0\in X$, we have that
\[\lim_{x\to \infty}\frac{\partial (f(x),g(x))}{d(x,x_0)}=0.\]
We adopt the convention that the above limit is always $0$, when $X$ is a bounded metric space. In other words, if $X$ is bounded, two maps $f,g:X \to Y$ are always asymptotically close. 
\end{definition}

It is evident  that the definition above is weaker than the one of closeness and that it is independent of the point $x_0$ above (Proposition \ref{PropDefiClose}).

\begin{definition}\label{DefiAsympCoaLipEquiv}
We say that metric spaces $X$ and $Y$ are \emph{asymptotically coarse Lipschitz equivalent} to each other if there are coarse Lipschitz maps $f:X\to Y$ and $g:Y\to X$ such that $g\circ f$ and $f\circ g$ are asymptotically close to $\mathrm{Id}_X$ and $\mathrm{Id}_Y$, respectively. The map $f$ (and $g$) is called an \emph{asymptotic coarse Lipschitz equivalence}.

Note that if $X$ and $Y$ are bounded, the above assumptions are satisfied by any maps $f:X\to Y$ and $g:Y\to X$.
\end{definition}

We show that asymptotic coarse Lipschitz equivalence is an equivalence relation in the class of metric spaces (Proposition \ref{PropTransitivity}). Notice that a  coarse Lipschitz equivalence $f$ is always an asymptotic coarse Lipschitz equivalence. The converse implication is however not true, as we see in   Example  \ref{ExWeakCoarseEqNotCoarseEqBanSp} below. In fact,   being asymptotic coarse Lipschitz equivalent is \emph{strictly} weaker than being  coarse Lipschitz equivalent: there are   asymptotically coarse Lipschitz equivalent  metric spaces $X$ and $Y$ which are not even   coarsely equivalent;  see Example \ref{ExampleAsyCoarseLipEqNotPreserveAsyDim} for an example where $\mathrm{AsyDim}(X)\neq \mathrm{AsyDim}(Y)$ and  Proposition \ref{PropAsympEquiImpliesCoarseEmb} for a more sophisticated example where $\mathrm{AsyDim}(X)=\mathrm{AsyDim}(Y)$ (where $\mathrm{AsyDim}$ stands for asymptotic dimension, see Subsection \ref{SubsectionSomeEx} for definitions).

\medskip Then, we focus on asymptotic coarse Lipschitz equivalences between Banach spaces. First, in Sections \ref{ultraproducts} and \ref{SectionAsyCLStruclp} we prove. 
 
\begin{theoremi} [Proved as Corollary \ref{CorAsyCLStructurelp} below]
If a Banach space $X$ is asymptotically coarse Lipschitz equivalent to $\ell_p$, for $p\in [2,\infty)$, then $ X$ is linearly isomorphic to $\ell_p$.  \label{ThmAsyCLStructurelpINTRO}
\end{theoremi}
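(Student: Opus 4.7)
The plan follows the classical strategy for rigidity of $\ell_p$ under nonlinear equivalences, pioneered by Heinrich--Mankiewicz and refined by Johnson--Lindenstrauss--Schechtman and Kalton--Randrianarivony: use ultrapower machinery to convert the given nonlinear equivalence into linear data, and then apply structural rigidity of $\ell_p$ to conclude $X\simeq \ell_p$. The new ingredient here, relative to existing work, is that the error terms in the inverse identities are only sublinear rather than bounded; accommodating this is the technical role of Section \ref{ultraproducts}.

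Concretely, given an asymptotic coarse Lipschitz equivalence $f:\ell_p\to X$ with partner $g:X\to \ell_p$, I would form, using a non-principal ultrafilter $\mathcal U$ on $\N$ and the rescalings $f_n(x):=n^{-1}f(nx)$ and $g_n(y):=n^{-1}g(ny)$, ultrapower maps $\widehat f:\ell_p\to X_{\mathcal U}$ and $\widehat g:X\to (\ell_p)_{\mathcal U}$. The rescaling turns the coarse Lipschitz estimates on $f$ and $g$ into genuine Lipschitz estimates at the ultraproduct level, and crucially the $n^{-1}$ factor should absorb not only bounded errors (as in the classical case) but also the sublinear errors intrinsic to $\sim_\infty$, so that after the appropriate identifications one obtains bona fide bi-Lipschitz identities between $\widehat f$ and $\widehat g$.

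With these Lipschitz ultrapower maps in hand, Section \ref{SectionAsyCLStruclp} linearizes them: for $p\in[2,\infty)$ the ultrapower $(\ell_p)_{\mathcal U}$ is (isometric to) an $L_p(\mu)$-space, which has enough differentiability structure for a Mankiewicz / Heinrich--Mankiewicz style differentiation to produce bounded linear operators with linear left inverses between $\ell_p$ and a suitable ultrapower of $X$. Classical descent arguments (finite representability, separability of $\ell_p$, local reflexivity) then convert this data into complemented linear copies of each space in the other, and Pe\l czy\'nski's decomposition method finally yields $X\simeq\ell_p$.

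The main obstacle is precisely the first step: verifying that the sublinear decay $\|g(f(x))-x\|/\|x\|\to 0$ really produces a true identity in the ultraproduct. In the bounded-error (coarse Lipschitz) case one has a fixed $C>0$ with $\|g(f(x))-x\|\leq C$, which is trivially compressed by $n^{-1}$ and vanishes in the ultrapower. Here the errors tend to zero only in ratio, and one must couple the rescaling, the coarse Lipschitz constants and the choice of ultrafilter carefully so that the decay is ``uniform enough'' along $\mathcal U$ to still give zero in the limit. This calibration between asymptotic closeness and the ultraproduct construction is presumably the technical heart of Section \ref{ultraproducts}, after which the rest of the argument runs largely parallel to its coarse Lipschitz predecessors.
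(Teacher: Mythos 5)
Your first two steps do match the paper: the rescaled ultrapower maps $f_n(x)=n^{-1}f(nx)$ do absorb the sublinear $\sim_\infty$ errors and give a Lipschitz equivalence $X^\N/\cU\simeq(\ell_p)^\N/\cU$ (this is exactly Proposition \ref{lemmaUltrapower}, and it is a short computation, not the technical heart), and Heinrich--Mankiewicz differentiation \cite{HeinrichMankiewicz1982} then makes $X$ isomorphic to a complemented subspace of an $L_p(\mu)$-space. The genuine gap is what comes next. Being complemented in $L_p$ is far from being isomorphic to $\ell_p$: for $p>2$ the space $L_p$ contains complemented copies of $\ell_2$, so a space such as $\ell_p\oplus\ell_2$ passes every test your outline imposes (complemented in $L_p$, mutually complemented with $\ell_p$ in the weak sense you can actually extract), and ``Pe\l czy\'nski decomposition'' cannot finish from this data alone. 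What is needed, and what your proposal never supplies, is a proof that $\ell_2$ does not coarse Lipschitz embed into $X$; only then does Johnson's factorization theorem \cite{Johnson1976JLMS} upgrade ``complemented in $L_p$'' to ``complemented in $\ell_p$'', after which Pe\l czy\'nski \cite{Pelczynski1960Studia} gives $X\simeq\ell_p$.

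That exclusion of $\ell_2$ is the genuinely new content of the paper (Theorem \ref{TheoremNol2}): it cannot be quoted from the coarse Lipschitz literature because an asymptotic coarse Lipschitz equivalence need not be a coarse Lipschitz embedding, so one has to combine the Kalton--Randrianarivony Hamming-graph estimate for $p$-AUS spaces \cite{KaltonRandrianarivony2008} with the asymptotic lower bound of Proposition \ref{PropAlmCLInv} (valid only for pairs of points whose mutual distance is comparable to their distance to a base point, which is exactly what the Hamming-graph vectors provide). This is also where the restriction $p\in[2,\infty)$ really comes from: the Hamming-graph argument needs $2<p$ (with $p=2$ handled separately via Enflo, Corollary \ref{CorFinDimWeakNotion}), while for $p<2$ one would need an approximate-midpoint argument, which fails in the asymptotic setting (Remark \ref{RemarkMidPoint}). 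Your proposal instead attributes the $p\ge 2$ restriction to differentiability of $L_p$ ultrapowers and locates the main obstacle in calibrating the ultrafilter against the sublinear errors; both of these are misdiagnoses, and the missing $\ell_2$-exclusion step is a real hole in the argument rather than a routine verification.
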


In fact, we prove a stronger result. We show that the asymptotically coarse Lipschitz strucutre of  $\ell_{p_1}\oplus \ldots \oplus \ell_{p_n}$ completely determines its linear structure for all $p_1,
\ldots, p_n\in [2,\infty)$ (see Theorem \ref{ThmAsyCLStructurelp}). Our methods are not enough to obtain the analogous result for $p$'s smaller than 2. This happens since the usual midpoint argument used in this range is not enough for this context (see Remark \ref{RemarkMidPoint} below for a detailed discussion about that).

 Finally, in Section \ref{asymptoticstructure}, we establish a variant of the Gorelik principle for asymptotic coarse Lipschitz equivalences. We apply it to show that some asymptotic linear properties of Banach spaces are also preserved under asymptotic coarse Lipschitz equivalences.\footnote{We point out to the reader that the word ``asymptotic'' may be a little misleading here. Indeed, while our choice for this word in the definition of our equivalences is motivated by the notion of asymptotic closeness, $\sim_\infty$, \emph{asympotitic properties} of Banach spaces are usually properties which depend on finite codimensional subspaces only. } To state our main result along these lines, we start recalling the definition of asymptotic uniform smoothness. Given a Banach space $X$, we denote by $B_X$ its closed unit ball, $S_X$ its unit sphere and $\co(X)$ the set of its closed finite codimensional subspaces. Then,  we define the \emph{modulus of asymptotic uniform smoothness of $X$} by letting
\[\bar\rho_X(\tau)=\sup_{x\in S_X}\inf_{Y\in \co(X)}\sup_{y\in S_Y}\|x+\tau y\|-1\]
for all $\tau\geq 0$. The Banach space $X$ is called  \emph{asymptotically uniformly smooth}, abbreviated as \emph{AUS}, if 
\[\lim_{\tau\to 0}\frac{\bar\rho_X(\tau)}{\tau}=0.\]
It is known that if $X$ is AUS, then there is $p\in (1,\infty)$ such that $\bar\rho_X(\tau)\leq C\tau^p$ for all $\tau\geq(0,1)$ (see \cite{KnaustOdellSchlumprecht1999Positivity} or \cite[Theorem 2.1]{Raja2013JFA}); in this case, $X$ is called \emph{$p$-asymptotically uniformly smooth}, abbreviated as \emph{$p$-AUS}. Let us also say that $X$ is $\infty$-AUS (or \emph{asymptotically uniformly flat}, AUF in short) if there exists $\tau_0>0$ such that $\bar\rho_X(\tau)=0$ for all $\tau<\tau_0$. 

Notice that AUSness and $p$-AUSness are  not   isomorphic properties, so equivalent norms may disagree on this matter. If a Banach space $X$ has an equivalent norm making it AUS, we say that $X$ is \emph{AUSable}. We define $p$-AUSable analogously.

The following is our main result about preservation of asymptotic structures. 
 
\begin{theoremi}[]
Let $p\in (1,\infty]$ and let $X$ be a $p$-AUS  Banach space. If a Banach space $Y$    is  asymptotically coarse Lipschitz equivalent to $X$, then $Y$ is $p'$-AUSable for all $p'\in (1,p)$.\label{ThmAUSPreservationINTRO}
\end{theoremi}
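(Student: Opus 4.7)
The plan is to produce an equivalent norm $N$ on $Y$ whose modulus of asymptotic uniform smoothness satisfies $\bar\rho_{(Y,N)}(\tau)=O(\tau^{p'})$ as $\tau\to 0^+$. Let $f\colon X\to Y$ and $g\colon Y\to X$ be asymptotic coarse Lipschitz equivalences with coarse Lipschitz constant $L$. The candidate $N$ comes from the ultraproduct machinery developed in Section \ref{ultraproducts}, using $g$ to transport the $X$-norm at infinity: e.g., along a suitable free ultrafilter $\cU$, set $N(y):=\lim_\cU \|g(ty)\|_X/t$ (after the standard symmetrization/convexification needed to obtain a genuine norm). Coarse Lipschitzness of $g$ ensures $N$ is well-defined and bounded above by a multiple of $\|\cdot\|_Y$; the asymptotic closeness of $f\circ g$ to $\mathrm{Id}_Y$ (together with coarse Lipschitzness of $f$) gives the lower bound, so $N$ is equivalent to $\|\cdot\|_Y$.

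The second key tool is the variant of the Gorelik principle established earlier in Section \ref{asymptoticstructure}. In the asymptotic setting, it transfers finite codimensional structure through $f$: for every $Z\in\co(Y)$ there is $Z'\in\co(X)$ (with codimension controlled by that of $Z$) such that, for every $y\in S_Y$, every $\tau>0$, and every $t$ large enough, given any $z\in B_Z$ one can find $x\in X$ with $\|x\|_X\sim t$ and $z'\in B_{Z'}$ with $\|z'\|_X\lesssim L\tau t$ satisfying $\|f(x+z')-t(y+\tau z)\|_Y=o(t)$, the error being precisely the sublinear error provided by the $\sim_\infty$ relation. In other words, displacements in a finite codimensional subspace of $Y$ lift, modulo a sublinear error, to displacements in a finite codimensional subspace of $X$.

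Now fix $y\in S_Y$ and $\tau>0$ small. By $p$-AUSness of $X$, at the lifted point $x$ (with $\|x\|_X\sim t$) one can choose, for any $\eta>0$, a finite codimensional $Z'$ so that $\sup_{z'\in S_{Z'}}\|x+\sigma z'\|_X\leq \|x\|+C\sigma^p\|x\|^{1-p}+\eta$ for all $\sigma>0$. Taking $\sigma\asymp L\tau t$, pushing the inequality through $f$ via the Gorelik transfer, dividing by $t$ and passing to $\lim_\cU$, one obtains
$$N(y+\tau z)\leq 1+C'\tau^p + o_{t\to\infty}(1)$$
for every $z$ in a finite codimensional subspace of $Y$, and hence a bound on $\bar\rho_{(Y,N)}(\tau)$. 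The $o(t)$ error from asymptotic closeness vanishes in the limit for each fixed $\tau$; however, this limit is not uniform in $\tau$, since for small $\tau$ the parameter $t$ must be taken increasingly large just to push the error below $\tau^p$. This interplay — quantifying how fast $t$ must grow as $\tau\to 0$ — is the main technical difficulty, and it is what forces us to trade a tiny amount of the exponent and conclude only $\bar\rho_{(Y,N)}(\tau)=O(\tau^{p'})$ for arbitrary $p'<p$ rather than for $p$ itself. In the classical coarse Lipschitz setting the analogous error is $O(1)$ and is absorbed harmlessly; here the best one can do is shift the exponent to any $p'<p$, which is exactly what the statement claims.
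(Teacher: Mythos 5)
There are two genuine gaps. First, the ``Gorelik transfer'' you invoke is not what the Gorelik principle gives, and its direction is wrong. The asymptotic Gorelik principle (Theorem \ref{ThmGorelik}) states that for a finite codimensional subspace $X_0\subset X$ and $t$ large there is a \emph{compact} $K\subset Y$ with $\frac{t}{M}B_Y\subset K+\eps t B_Y+f(2tB_{X_0})$: it is a covering statement saying that $f(2tB_{X_0})$ almost fills a ball of $Y$ modulo a compact set, not a device that lifts a prescribed direction $z$ lying in a given $Z\in\co(Y)$ to a vector $z'$ in some $Z'\in\co(X)$, let alone with codimension control. The only way the principle interacts with finite codimensional subspaces of $Y$ is through duality: one chooses $z^*$ in a weak$^*$-closed finite codimensional subspace of $Y^*$ that nearly annihilates the compact $K$, and then uses the inclusion to produce $w\in\sigma\|x\|B_{X_0}$ on which $z^*(-f(w))$ is large. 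This is why the paper estimates the dual modulus $\bar\theta$ of weak$^*$-asymptotic uniform convexity, using the weak$^*$ lower semicontinuous dual norms $|y^*|_k=\sup\{|y^*(f(x)-f(x'))|/\|x-x'\|:\|x-x'\|\ge 2^k\}$ built from $f$, and only afterwards passes to $\bar\rho$ by Young duality. Your primal estimate of $\bar\rho_{(Y,N)}$ never explains how the finite codimensional subspace of $Y$ (over which the supremum in $\bar\rho$ must be taken) is produced from the $p$-AUS data of $X$; as written, that step does not follow from anything established in the paper.

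Second, even granting an estimate of the form $N(y+\tau z)\le 1+C'\tau^p+o_{t\to\infty}(1)$, the passage to ``$\bar\rho_{(Y,N)}(\tau)=O(\tau^{p'})$ for every $p'<p$'' is asserted, not proved: for a \emph{fixed} norm $N$ the additive error coming from $\sim_\infty$ does not go to $0$ with $\tau$, and ``trading a tiny amount of exponent'' is not a mechanism. In the paper the exponent loss has a completely different and precise source: Theorem \ref{ThmTheta*Pres} produces, for each $\delta$ (equivalently each scale $\tau$), a \emph{different} renorming of $Y$ with uniformly bounded equivalence constant satisfying $\bar\theta_{(Y,|\cdot|)}(\tau)\ge\bar\theta_X(\tau/C)-\delta$; this $\tau$-by-$\tau$ family of renormings is exactly the renorming characterization of Causey's classes $\textsf{A}_p$ and $\textsf{N}_p$ (Theorems \ref{TheoremCauseyNp} and \ref{TheoremCFLAp}), and it is the nontrivial inclusion $\textsf{N}_p\subsetneq\bigcap_{p'<p}\textsf{T}_{p'}$ (Theorem \ref{inclusions}) that converts this family into a single $p'$-AUS norm. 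Your single ultraproduct norm $N(y)=\lim_\cU\|g(ty)\|_X/t$ (which in any case needs convexification, after which the claimed smoothness estimate must be rechecked) cannot replace this machinery without a substitute argument, so the proposal as it stands does not yield the theorem.
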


We point out that Theorem \ref{ThmAUSPreservationINTRO} cannot be improved to say that $p$-AUSness is preserved by asymptotically coarse Lipschitz equivalence. Indeed, N. Kalton showed that $p$-AUSness is not stable even under the stronger notion of coarse Lipschitz equivalence (see \cite[Theorem 5.4 and Remark in page 170]{Kalton2013Examples}).

\begin{corollaryi}\label{CorAUSPreservationINTRO}
AUSableness is preserved by asymptotically coarse Lipschitz equivalences.
\end{corollaryi}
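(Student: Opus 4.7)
The plan is to reduce the statement to a direct application of Theorem~\ref{ThmAUSPreservationINTRO} together with two soft observations: that asymptotic coarse Lipschitz equivalence is insensitive to renormings, and that any AUS Banach space is actually $p$-AUS for some $p>1$.

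Concretely, suppose $X$ is AUSable and $Y$ is asymptotically coarse Lipschitz equivalent to $(X,\|\cdot\|)$. First, I would pick an equivalent norm $\|\cdot\|'$ on $X$ for which $(X,\|\cdot\|')$ is AUS. By the theorem of Knaust-Odell-Schlumprecht quoted in the introduction (see \cite{KnaustOdellSchlumprecht1999Positivity} or \cite[Theorem~2.1]{Raja2013JFA}), there exists $p\in(1,\infty]$ such that $(X,\|\cdot\|')$ is in fact $p$-AUS. Second, I would note that the identity map $(X,\|\cdot\|)\to (X,\|\cdot\|')$ is bi-Lipschitz and thus trivially an asymptotic coarse Lipschitz equivalence; invoking the transitivity of the relation (Proposition~\ref{PropTransitivity}), one concludes that $(X,\|\cdot\|')$ is asymptotically coarse Lipschitz equivalent to $Y$.

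Now Theorem~\ref{ThmAUSPreservationINTRO} applies with $(X,\|\cdot\|')$ in the role of the $p$-AUS space, and it gives that $Y$ is $p'$-AUSable for every $p'\in(1,p)$. Fixing any such $p'$ (e.g.\ $p'=(1+p)/2$ when $p<\infty$, or any $p'\in(1,\infty)$ when $p=\infty$), we obtain an equivalent norm on $Y$ under which $Y$ is $p'$-AUS, and hence a fortiori AUS, since $\bar\rho_Y(\tau)\le C\tau^{p'}$ with $p'>1$ forces $\bar\rho_Y(\tau)/\tau\to 0$. Therefore $Y$ is AUSable.

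There is no serious obstacle here: the corollary is a packaging of Theorem~\ref{ThmAUSPreservationINTRO}. The only subtlety to record carefully is that $p$-AUSness is a \emph{norm-dependent} property, so one must first move to the AUS norm on $X$ before invoking the theorem, and then move to the $p'$-AUS norm on $Y$ before concluding AUSability; the transitivity of asymptotic coarse Lipschitz equivalence under bi-Lipschitz changes of norm is precisely what makes these two moves legitimate.
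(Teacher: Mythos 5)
Your proposal is correct and follows essentially the route the paper intends: the corollary is the immediate consequence of Theorem~\ref{ThmAUSPreservationINTRO}, obtained by passing to an AUS norm on $X$ (which is $p$-AUS for some $p>1$ by \cite{KnaustOdellSchlumprecht1999Positivity, Raja2013JFA}), noting that renorming does not affect asymptotic coarse Lipschitz equivalence, and then fixing any $p'\in(1,p)$ to get an AUS renorming of $Y$. The two bookkeeping points you flag (norm-dependence of $p$-AUSness and invariance of the equivalence under bi-Lipschitz changes of norm) are exactly the right ones, and your argument handles them correctly.
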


We finish this introduction with a quick discussion about Theorem \ref{ThmAUSPreservationINTRO} above. For $p\in (1,\infty]$, let us denote  by $\textsf{T}_p$ the class of all $p$-AUSable Banach spaces. In the literature of Banach spaces, one  often studies some small variations of the class $\textsf{T}_p$; this is important both to  better understand $\textsf{T}_p$ as well as to  pin down precise asymptotic properties which are preserved by different notions of equivalences between Banach spaces. For $p\in (1,\infty]$, some of those variations on the definition of $p$-AUSness gives rise to   classes of Banach spaces denoted by  $\textsf{A}_p$ and $\textsf{N}_p$  (we refer the reader to Subsection \ref{SubsectionAsyProp} for precise definitions). Those classes are related by the following inclusions 
\[\textsf{T}_p \subsetneq \textsf{A}_p \subsetneq \textsf{N}_p \subsetneq \bigcap_{p'<p}\textsf{T}_{p'},\ \text{if}\ p\in (1,\infty)\ \ \text{and}\ \ \textsf{T}_\infty \subsetneq \textsf{A}_\infty = \textsf{N}_\infty \subsetneq \bigcap_{p'<\infty}\textsf{T}_{p'}.\]
In  Section \ref{asymptoticstructure}, we actually prove a stronger technical result which, together with the inclusions above, implies Theorem \ref{ThmAUSPreservationINTRO}. Precisely, we shall   prove  that, for $p\in (1,\infty]$, both classes $\textsf{A}_p$ and $\textsf{N}_p$ are preserved by asymptotically coarse Lipschitz equivalences (see Theorem \ref{precise stability}).

\section{Preliminaries}

\subsection{Basic properties} In this subsection, we prove several basic properties of the new definitions given in the introduction. The material proved in here will be used throughout the rest of the paper and it serves as a warm up for the reader to get used to those new definitions. We start with a terminology which will be useful.

\begin{definition}
Let $X$ and $Y$ be asymptotically coarse Lipschitz equivalent metric spaces and let $f:X\to Y$ and $g:Y\to X$ witness this equivalence, i.e., $f$ and $g$ are as in Definition \ref{DefiAsympCoaLipEquiv}. We say that $g$ is an \emph{asymptotic coarse Lipschitz inverse of $f$} and vice-versa.
\end{definition}

\begin{proposition}\label{PropDefiClose}
Let $(X,d)$ and $(Y,\partial)$ be metric spaces, and let  $f,g:X\to Y$ be  maps. The following are equivalent:

\begin{enumerate}
\item\label{PropDefiClose.Item1} For some $x_0\in X$, we have that
\[\lim_{x\to \infty}\frac{\partial (f(x),g(x))}{d(x,x_0)}=0.\]
\item\label{PropDefiClose.Item2} For all $x_0\in X$, we have that
\[\lim_{x\to \infty}\frac{\partial (f(x),g(x))}{d(x,x_0)}=0.\]
\end{enumerate} 
\end{proposition}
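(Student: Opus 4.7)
The plan is to show that the asymptotic closeness condition is independent of the chosen basepoint, using nothing more than the triangle inequality. The implication (2) $\Rightarrow$ (1) is immediate since (2) is strictly stronger, so the entire content lies in (1) $\Rightarrow$ (2). I may assume $X$ is unbounded, as the bounded case is handled by the convention stated in the definition.

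So suppose the limit condition holds for some $x_0 \in X$, and fix an arbitrary $x_1 \in X$. The key observation is that the triangle inequality gives
\[
|d(x,x_0) - d(x,x_1)| \leq d(x_0,x_1),
\]
so, for $x$ with $d(x,x_1) > 0$, the ratio $d(x,x_0)/d(x,x_1)$ stays bounded and in fact tends to $1$ as $d(x,x_1) \to \infty$. Moreover, the same inequality shows that $d(x,x_0) \to \infty$ if and only if $d(x,x_1) \to \infty$, so the phrase ``as $x \to \infty$'' has the same meaning regardless of which basepoint is used.

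I would then write
\[
\frac{\partial(f(x),g(x))}{d(x,x_1)} = \frac{\partial(f(x),g(x))}{d(x,x_0)} \cdot \frac{d(x,x_0)}{d(x,x_1)},
\]
valid for all $x$ sufficiently far from $x_1$ (so that both denominators are positive). The first factor tends to $0$ by hypothesis, while the second is bounded, so the product tends to $0$ and (2) holds. No real obstacle arises; the proof is a direct triangle-inequality computation whose only mild subtlety is ensuring the denominators are positive, which is automatic once $x$ is far enough from $x_1$.
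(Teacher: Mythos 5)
Your proposal is correct and follows essentially the same route as the paper: both reduce (1) $\Rightarrow$ (2) to a triangle-inequality comparison of the two denominators $d(x,x_0)$ and $d(x,x_1)$ for $x$ far away (the paper simply bounds the ratio by $2$ once $d(x,x_0)>2d(x_1,x_0)$, while you note it is bounded and tends to $1$), with the bounded case handled by the convention. No gaps.
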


\begin{proof}
Suppose \eqref{PropDefiClose.Item1} holds for $x_0\in X$ and fix $x_1\in X$. First, if $X$ has finite diameter, the result follows immediately from our convention. If not, then, for any $x\in X$ with $d(x,x_0)> 2d(x_1,x_0)$, we have that 
\[\frac{\partial(f(x),g(x))}{d(x,x_1)}\leq \frac{\partial(f(x),g(x))}{d(x,x_0)-d(x_1,x_0)}\leq 2  \frac{\partial(f(x),g(x))}{d(x,x_0)}\]
 and the result follows.
\end{proof}

As mentioned in the introduction, asymptotic coarse Lipschitz equivalence is an equivalence relation on the class of metric spaces. Since reflexivity and symmetry are evident, we only need to notice its transitivity. For that, we start by proving a preliminary result which will be useful throughout these notes.

\begin{proposition}\label{PropAlmCLInv}
Let $(X,d)$ and $(Y,\partial)$ be metric spaces,  and let $f:X\to Y$ and $g:Y\to X$ be coarse Lipschitz maps with $g\circ f\sim_\infty\mathrm{Id}_X$.  For each $x_0\in X$ and $\theta>0$,  there is $L>0$, such that for  all $x,x'\in X$ we have that \[d(x,x')>  \theta\max\{  d(x,x_0),  d(x',x_0)\}\ \text{ implies }\ \partial(f(x),f(x'))\geq \frac{1}{L}d(x,x')-L.\]
\end{proposition}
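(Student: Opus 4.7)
The plan is to invert $f$ by composing with $g$, much as in the standard proof that coarse Lipschitz equivalences are quasi-isometries, but with more care because the error term $d(x, g(f(x)))$ is now only controlled asymptotically rather than uniformly. Fix a coarse Lipschitz constant $K > 0$ for $g$, so that $d(g(y), g(y')) \leq K \partial(y, y') + K$ for all $y, y' \in Y$. The triangle inequality gives
\[
d(x, x') \;\leq\; d(x, g(f(x))) \,+\, K \partial(f(x), f(x')) + K \,+\, d(g(f(x')), x')
\]
for any $x, x' \in X$, so it will suffice to show that under the hypothesis $d(x,x') > \theta \max\{d(x,x_0), d(x',x_0)\}$, the two error terms together are at most $\tfrac{1}{2} d(x,x')$ plus a constant depending only on $x_0$, $\theta$, $f$ and $g$.

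To this end I would fix $\eps \in (0, \theta/4)$ and use $g \circ f \sim_\infty \mathrm{Id}_X$ to produce $R > 0$ such that $d(x, g(f(x))) \leq \eps \, d(x, x_0)$ whenever $d(x, x_0) \geq R$. Since coarse Lipschitz maps send bounded sets to bounded sets, the map $g \circ f$ is bounded on the ball of radius $R$ around $x_0$, so there exists $M > 0$ with $d(x, g(f(x))) \leq M$ for every $x$ satisfying $d(x, x_0) \leq R$. Combining the two regimes yields the uniform bound $d(x, g(f(x))) \leq M + \eps \, d(x, x_0)$ valid for every $x \in X$, which is the key estimate.

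The final step combines everything. Using $d(x, x_0) + d(x', x_0) \leq 2 \max\{d(x, x_0), d(x', x_0)\} < \frac{2}{\theta} d(x, x')$ together with $\eps < \theta/4$, we obtain
\[
d(x, g(f(x))) + d(g(f(x')), x') \;\leq\; 2M + \eps\bigl(d(x, x_0) + d(x', x_0)\bigr) \;\leq\; 2M + \tfrac{1}{2} d(x, x').
\]
Plugging this into the inequality from the first paragraph and rearranging gives a bound of the form $\partial(f(x), f(x')) \geq \frac{1}{2K} d(x,x') - \frac{2M + K}{K}$, from which the desired $L$ is immediately read off. The main obstacle, and the reason the ordinary coarse-Lipschitz-inverse argument needs modification, is that one cannot bound $d(x, g(f(x)))$ by a single constant; the hypothesis $d(x,x') > \theta \max\{d(x,x_0), d(x',x_0)\}$ is precisely what allows the asymptotic bound $\eps \, d(x, x_0)$ to be absorbed into $\tfrac{1}{2} d(x, x')$, once $\eps$ is chosen small enough relative to $\theta$.
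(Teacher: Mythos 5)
Your proof is correct, and it is essentially the paper's argument presented directly rather than by contradiction: the same triangle-inequality decomposition through $g\circ f$, the same use of the coarse Lipschitz constant of $g$, and the same split into the regimes $d(x,x_0)\leq R$ and $d(x,x_0)\geq R$ (which in the paper appears as the bounded/unbounded dichotomy for the sequence $(x_n)_n$), with the hypothesis $d(x,x')>\theta\max\{d(x,x_0),d(x',x_0)\}$ absorbing the asymptotic error into a fraction of $d(x,x')$. The only difference is stylistic: your contrapositive formulation yields explicit constants, while the paper extracts a contradiction from sequences.
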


\begin{proof}
Suppose the proposition fails for $x_0\in X$ and $\theta>0$. So, there are sequences $(x_n)_n,(x_n')_n\subset X$  such that
\begin{enumerate}

\item\label{Enume3}  $d(x_n,x'_n)>   \theta\max\{   d(x_n,x_0),   d(x'_n,x_0)\}$ for all $n\in\N$, and 
\item \label{Enume4} $\partial(f(x_n),f(x'_n))\leq \frac{1}{n}d(x_n,x'_n)-n$ for all $n\in\N$. 
\end{enumerate}   
In particular, \eqref{Enume4} implies that $\lim_{n}d(x_n,x_n')=\infty$.

Let $g:Y\to X$ be  coarse Lipschitz and such that $g\circ f\sim_\infty\mathrm{Id}_X$ and fix $M>0$ such that 
\[d(g(y),g(y'))\leq M\partial (y,y')+M\]
for all $y,y'\in Y$. So,  \eqref{Enume4} above gives that
\[d(g(f(x_n)),g(f(x'_n)))\leq \frac{M}{n}d(x_n,x'_n)+M \]
for all $n\in\N$.\\

\begin{claim}
Passing to a subsequence if necessary, we can  assume that \[d(g(f(x_n)),x_n ) \leq \frac{d(x_n,x_n')}{3 }\ \text{ and }\ d(g(f(x_n')),x_n ') \leq \frac{d(x_n,x_n')}{3 }\]
for all $n\in\N$.
\end{claim}

 \begin{proof}
 It is enough to show the claim holds for $(x_n)_n$.  Suppose first that $(x_n)_n$ is bounded. In this case, as $f$ and $g$ are coarse, we must have  that $(d(g(f(x_n)),x_n))_n$ is bounded. Then, as  $\lim_{n}d(x_n,x_n')=\infty$, there is $n_0\in\N$ such that 
\[d(g(f(x_n)),x_n ) \leq \frac{d(x_n,x_n')}{3 }\]
for all $n>n_0$. 

Suppose now $(x_n)_n$ is unbounded. Then, passing to a subsequence, we assume that $\lim_nd(x_n,x_0)=\infty$.   As $g\circ f\sim_\infty\mathrm{Id}_X$,  there is $n_0\in\N$ such that  
\[ d(g(f(x_n)),x_n) \leq \theta\frac{d(x_n,x_0)}{3 }\]
for all $n>n_0$. As $d(x_n,x_n')$ is larger than  $\theta {d(x_n,x_0)}$,  we conclude that 
\[ d(g(f(x_n)),x_n) \leq  \frac{d(x_n,x_n')}{3 }\]
 for all $n>n_0$. 
 \end{proof}

Passing to a subsequence, we now assume the previous claim holds for $(x_n)_n$ and $(x'_n)_n$.  We then conclude that 
 \begin{align*}
\frac{M}{n}d(x_n,x'_n)+M&\geq d(g(f(x_n)),g(f(x'_n)))\\ &\geq d(x_n,x'_n)-d(g(f(x_n)),x_n)-d(g(f(x_n')),x'_n)\\
&\geq \frac{d(x_n,x_n')}{3}
\end{align*}
for all $n>n_0$. This gives us a   contradiction since $\lim_{n}d(x_n,x_n')=\infty$. 
\end{proof}

Next we isolate an immediate corollary of Proposition \ref{PropAlmCLInv}.
\begin{corollary}\label{CorPointwiseExpanssion}
Let $(X,d)$ and $(Y,\partial)$ be metric spaces and $f:X\to Y$ be an asymptotic coarse Lipschitz equivalence. Then, for all $x_0\in X$ there is $L>0$ such that 
\[\partial(f(x),f(x_0))\geq \frac{1}{L}d(x,x_0)-L\]
for all $x\in X$. In particular, for all $x_0\in X$ we have that \[\lim_{x\to \infty} \partial(f(x),f(x_0))=\infty.\]
\end{corollary}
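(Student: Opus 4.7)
The plan is a direct application of Proposition \ref{PropAlmCLInv}. Since $f$ is an asymptotic coarse Lipschitz equivalence, by definition there exists a coarse Lipschitz map $g:Y\to X$ with $g\circ f\sim_\infty \mathrm{Id}_X$, so the hypotheses of Proposition \ref{PropAlmCLInv} are satisfied.

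Fix $x_0\in X$ and any $\theta\in(0,1)$, say $\theta=\tfrac{1}{2}$. I would apply Proposition \ref{PropAlmCLInv} with $x':=x_0$. The condition $d(x,x')>\theta\max\{d(x,x_0),d(x',x_0)\}$ then reduces to $d(x,x_0)>\theta\, d(x,x_0)$, which holds for every $x$ with $d(x,x_0)>0$ simply because $\theta<1$. Hence the proposition yields a constant $L>0$ (depending on $x_0$, and on the fixed choice of $\theta$) such that
\[\partial(f(x),f(x_0))\geq \frac{1}{L}d(x,x_0)-L\]
for every $x\in X$ with $x\ne x_0$. The case $x=x_0$ is immediate, since the left-hand side is $0$ while the right-hand side equals $-L\le 0$. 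This proves the main inequality.

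The "in particular" statement follows at once: as $d(x,x_0)\to\infty$ the lower bound $\tfrac{1}{L}d(x,x_0)-L$ tends to infinity, and hence so does $\partial(f(x),f(x_0))$. There is no genuine obstacle to overcome, the corollary being just the specialization of Proposition \ref{PropAlmCLInv} to pairs of the form $(x,x_0)$ with $x_0$ fixed; the only observation needed is that for a fixed base point $x_0$ the "separation" hypothesis $d(x,x_0)>\theta d(x,x_0)$ holds trivially whenever $\theta<1$, so no further control on $x$ is required.
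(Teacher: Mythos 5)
Your proof is correct and is essentially the paper's argument: the paper presents the corollary as an immediate consequence of Proposition \ref{PropAlmCLInv}, and your specialization with $x'=x_0$ and a fixed $\theta\in(0,1)$ (so that the separation hypothesis reduces to $d(x,x_0)>\theta d(x,x_0)$, automatic for $x\neq x_0$) is exactly the intended deduction, including the trivial case $x=x_0$ and the limit statement.
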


\begin{proposition}\label{PropTransitivity}
The asymptotically coarse Lipschitz equivalence is an equivalence relation in the class of metric spaces. 
\end{proposition}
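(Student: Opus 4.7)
Reflexivity is witnessed by the identity map on $X$, and symmetry is built into Definition~\ref{DefiAsympCoaLipEquiv} by swapping $f$ and $g$. The real content of the proposition is transitivity. Suppose that $(X,d_X)$ is asymptotically coarse Lipschitz equivalent to $(Y,d_Y)$ via $f_1 : X \to Y$ and $g_1 : Y \to X$, and that $(Y,d_Y)$ is asymptotically coarse Lipschitz equivalent to $(Z,d_Z)$ via $f_2 : Y \to Z$ and $g_2 : Z \to Y$. The natural candidates are $F := f_2 \circ f_1$ and $G := g_1 \circ g_2$. Compositions of coarse Lipschitz maps are coarse Lipschitz (chain the affine bounds on their moduli), so the substance of the argument is to check that $G \circ F \sim_\infty \mathrm{Id}_X$ and, symmetrically, $F \circ G \sim_\infty \mathrm{Id}_Z$. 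By symmetry, spelling out the first case suffices.

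If $X$ has finite diameter the conclusion is immediate from the convention, so assume $X$ is unbounded and fix $x_0 \in X$. Starting from
\[
d_X(G(F(x)), x) \leq d_X\bigl(g_1(g_2(f_2(f_1(x)))),\, g_1(f_1(x))\bigr) + d_X\bigl(g_1(f_1(x)), x\bigr),
\]
the second term divided by $d_X(x,x_0)$ tends to $0$ thanks to the hypothesis $g_1 \circ f_1 \sim_\infty \mathrm{Id}_X$. Fixing an affine Lipschitz constant $M$ for $g_1$, the first term is bounded by $M\, d_Y(g_2(f_2(f_1(x))), f_1(x)) + M$, so everything reduces to showing
\[
\lim_{x \to \infty}\frac{d_Y\bigl(g_2(f_2(f_1(x))), f_1(x)\bigr)}{d_X(x, x_0)} = 0.
\]

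The crucial move is the factorization
\[
\frac{d_Y(g_2(f_2(f_1(x))), f_1(x))}{d_X(x, x_0)} \;=\; \frac{d_Y(g_2(f_2(f_1(x))), f_1(x))}{d_Y(f_1(x), f_1(x_0))} \cdot \frac{d_Y(f_1(x), f_1(x_0))}{d_X(x, x_0)}.
\]
The second factor is uniformly bounded for $x$ far from $x_0$ by the affine Lipschitz estimate on $f_1$. For the first factor, Corollary~\ref{CorPointwiseExpanssion} applied to the asymptotic coarse Lipschitz equivalence $f_1$ guarantees $d_Y(f_1(x), f_1(x_0)) \to \infty$ as $x \to \infty$; combining this with $g_2 \circ f_2 \sim_\infty \mathrm{Id}_Y$ read at the base point $f_1(x_0)$ (legitimate by Proposition~\ref{PropDefiClose}) forces the first factor to $0$.

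The main obstacle to watch is precisely the change of base point inherent in this argument: the hypothesis $g_2 \circ f_2 \sim_\infty \mathrm{Id}_Y$ is parameterized by $Y$, while the desired conclusion is parameterized by $X$. Translating between the two requires both an upper and a lower linear control on how $f_1$ moves distances from the basepoint: coarse Lipschitzness of $f_1$ supplies the upper bound, and the pointwise expansion of Corollary~\ref{CorPointwiseExpanssion} supplies the lower bound that keeps the denominator in $Y$ from staying bounded as $x \to \infty$ in $X$. Since both pieces of control are already in place, the argument goes through cleanly.
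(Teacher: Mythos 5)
Your proposal is correct and follows essentially the same route as the paper: the triangle-inequality split at $g_1(f_1(x))$, the affine bound on $g_1$, and the key factorization through $d_Y(f_1(x),f_1(x_0))$, with Corollary~\ref{CorPointwiseExpanssion} supplying the unboundedness needed to invoke $g_2\circ f_2\sim_\infty \mathrm{Id}_Y$ along the image of $f_1$. No gaps to report.
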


\begin{proof}
Reflexivity and symmetry of this relation  are evident; so, we only prove its transitivity. For this, let $(X,d_X)$, $(Y,d_Y)$, and $(Z,d_Z)$ be metric spaces, and let $f:X\to Y$ and $g: Y\to Z$   be asymptotic coarse Lipschitz equivalences with asymptotic coarse Lipschitz inverses $f':Y\to X$ and $g':Z\to Y$, respectively. As $f$ and $f'$ are coarse Lipschitz,   fix $L>0$ such that for all $t\ge 0$, 
\[\omega_f(t)\leq Lt+L \text{ and }\ \omega_{f'}(t)\leq Lt+L.\] 
Let us show that $f'\circ g'\circ g\circ f\sim_\infty \mathrm{Id}_X$. 
For that, fix $x_0\in X$ and notice that 
 \begin{align*}
 \frac{d_X(f'(g'(g(f(x)))),x)}{d_X(x,x_0)}&\leq  \frac{d_X(f'(g'(g(f(x)))),f'(f(x)))+ d_X(f'(f(x)),x)}{d_X(x,x_0)} \\
 &\leq \frac{Ld_Y(g'(g(f(x))),f(x))+L+ d_X(f'(f(x)),x)}{d_X(x,x_0)}
\end{align*}  
for all $x\in X$.  By Corollary \ref{CorPointwiseExpanssion}, we have that $\lim_{x\to \infty}d_Y(f(x),f(x_0))=\infty$. Hence, since $g'\circ g\sim_\infty \mathrm{Id}_{Y}$, it follows that
\begin{align*}\limsup_{x\to \infty} \frac{d_Y(g'(g(f(x))),f(x))}{d_X(x,x_0)}&=\limsup_{x\to \infty}\left( \frac{d_Y(g'(g(f(x))),f(x))}{d_Y(f(x),f(x_0))}\cdot \frac{d_Y(f(x),f(x_0))}{d_X(x,x_0)}\right)\\
&\le \limsup_{x\to \infty}\left( \frac{d_Y(g'(g(f(x))),f(x))}{d_Y(f(x),f(x_0))}\cdot \frac{Ld_X(x,x_0)+L}{d_X(x,x_0)}\right)\\
&=0.\end{align*}
Therefore, as $f'\circ f\sim_\infty \mathrm{Id}_X$, we conclude that 
\[\lim_{x\to x_0} \frac{d_X(f'(g'(g(f(x)))),x)}{d_X(x,x_0)}=0.\]
This shows that 
$f'\circ g'\circ g\circ g\sim_\infty \mathrm{Id}_X$ and a completely symmetric argument shows that $g\circ f\circ f'\circ g'\sim_\infty \mathrm{Id}_Y$. Since $g \circ f$ and $f' \circ g'$ are coarse Lipschitz, we conclude that $X$ and $Y$ are coarse Lipschitz equivalent.
\end{proof}

 \subsection{Some examples}\label{SubsectionSomeEx}
Obviously, any coarse Lipschitz equivalence is an asymptotic coarse Lispchitz equivalence. In this subsection, we provide some nontrivial examples of asymptotic coarse Lipschitz equivalences, i.e., examples which are not coarse Lipschitz equivalences (see also Subsection \ref{SubsectionRobEx} for  other such example).  In fact, the examples will be of spaces which are not even coarsely equivalent: recall, $(X,d)$ and $(Y,\partial)$ are \emph{coarsely equivalent} if there are maps $f:X\to Y$ and $g:Y\to X$ such that both $f$ and $g$ are \emph{coarse}, i.e., $\omega_f(t),\omega_g(t)<\infty$ for all $t\geq 0$, and $g\circ f\sim\mathrm{Id}_X$ and $f\circ g\sim \mathrm{Id}_Y$.

\begin{example}
\label{ExWeakCoarseEqNotCoarseEqBanSp}
Asymptotic coarse Lipschitz equivalences do not need to be coarse (Lipschitz) embeddings even in the class of Banach spaces; recall, a \emph{coarse embedding} (resp. \emph{coarse Lipschitz embedding}) is a coarse equivalence (resp.  coarse Lipschitz equivalence with a subset.  Let $f:\R\to \R$ be the identity map and $g:\R\to \R$ be the continuous piecewise affine  function determined by the following properties: 
\begin{enumerate}
\item $g(x)=x$ for all $x\in [-2,2]$,
\item $g$ is constant on the intervals $[2^n,2^n+n]$ and $[-2^n-n,-2^n]$ for all $n\in\N$, and
\item $g'(x)=1$ on the intervals $(2^n+n, 2^{n+1})$ and $( -2^{n+1},-2^n-n)$ for all $n\in\N$. 
\end{enumerate}
Both $f$ and $g$ are clearly  $1$-Lipschitz and \[|x-f(g(x))|=|x-g(f(x))|
\leq 1+\ldots + n\]
for all $x$   in either  $[2^n,2^{n+1}]$ or $[-2^{n+1},-2^{n}]$. Therefore, \[ f\circ g\sim_\infty \mathrm{Id}_X\ \text{ and }\ g\circ f\sim_\infty \mathrm{Id}_Y.\]
However, it is clear that 
\[\inf_{\|x-x'\|=t}  \|g(x)-g(x ')\|=0\]
for all $t>0$. So, $g$ cannot be a coarse Lipschitz embedding even though $g$ is an asymptotic coarse Lipschitz equivalence. Moreover, it is clear that $f$ can be modified (similarly to the definition of $g$), so that $f$ is not a coarse Lipschitz embedding either. 
\end{example}

\begin{remark}\label{RemarkAlmUnco}
A map $f:X\to Y$ between Banach spaces  is   called \emph{almost uncollapsed} if there is $t>0$ such that 
\[\inf_{\|x-x'\|=t}  \|f(x)-f(x ')\|>0;\]
see \cite{Rosendal2017Sigma} and \cite{Braga2018JFA} for more on almost uncollapsed maps (we restrict this definition to Banach spaces so that the condition ``$d(x,x')=\|x-x'\|=t$'' is not  vacuous). Notice that the requirement on the map $f$ not collapsing distances in the sense above is much weaker than the one of $f$ being a coarse (Lipschitz) embedding (see \cite[Proposition 2.5]{Braga2018JFA}).  Example \ref{ExWeakCoarseEqNotCoarseEqBanSp} shows much more than the fact that asymptotic coarse Lipschitz equivalences do not need to be coarse embeddings: they do not even need to be almost uncollapsed. 
\end{remark}

Although Example \ref{ExWeakCoarseEqNotCoarseEqBanSp} shows that an asymptotic coarse Lipschitz equivalence between $X$ and $Y$  does not need to be a coarse Lipschitz equivalence, the spaces in this example, i.e., $X=Y=\R$, are obviously coarse Lipschitz equivalent. In the next example, we show that this does not need to be the case: asymptotic coarse Lipschitz equivalence of metric spaces is strictly weaker than coarse Lipschitz equivalence. Before presenting the example, we recall the definition of asymptotic dimension.  Recall that, if $(X,d)$ is a metric space and $n\in\N\cup\{0\}$, then \emph{$X$ has asymptotic dimension at most $n\in\N$} if for all $r>0$ there are families $\cU_0,\ldots, \cU_n$ of subsets of $X$ such that 
 \begin{enumerate}
 \item $X=\bigcup_{i=0}^n\bigcup_{U\in \cU_i}U$,
 \item $d(U,V):=\inf\{d(x,y),\ (x,y)\in U\times V\}>r$ for all $i\in \{0,\ldots, n\}$ and all distinct  $U,V\in \cU_i$, and 
 \item $\sup_{U\in \cU_i}\mathrm{diam}(U)<\infty$ for all $i\in \{0,\ldots, n\}$.
 \end{enumerate}
The \emph{asymptotic dimension of $X$}, denoted by $\mathrm{AsyDim}(X)$, is then defined to be the minimal $n\in\N\cup\{0\}$ such that $X$ has asymptotic dimension at most $n$ (we  refer the reader to \cite{NowakYuBook} for more on asymptotic dimension).

\begin{example}\label{ExampleAsyCoarseLipEqNotPreserveAsyDim}
Consider   \[X=\{2^n\mid n\in\N\}\ \text{  and }\ Y=\{(2^n,j)\mid n\in\N,\ j\in\{0,1,\ldots, n\}\}\] endowed with their canonical metrics. Then $X$ and $Y$ are asymptotically coarse Lipschitz equivalent; but not coarse Lipschitz equivalent, in fact, they are not even coarsely equivalent.  Indeed, the maps $f:X\to Y$ and $g:Y\to X$ defined by setting \[f(2^n)=(2^n,0)\ \text{ and }\ g(2^n,j)=2^n\]
for all $n\in\N$ and all $j\in \{0,\ldots, n\}$ are clear witnesses of the  asymptotic coarse Lipschitz equivalence of $X$ and $Y$.  In particular, this implies that the asymptotic dimension of metric spaces is not preserved under asymptotic coarse Lipschitz equivalence. Indeed $X$ has asymptotic dimension $0$  and  $Y$ has asymptotic dimension  $1$, while asymptotic dimension is preserved by coarse Lipschitz embeddings (\cite[Proposition 2.2.4 and Theorem 2.2.5]{NowakYuBook}). 
\end{example}

\begin{remark} As, under the optics of coarse geometry,   $\{2^n\mid n\in\N\}$  is the smallest unbounded metric space\footnote{The word ``smallest'' is appropriate here since any unbounded metric space contains a coarse copy of $\{2^n\mid n\in\N\}$.}, Example \ref{ExampleAsyCoarseLipEqNotPreserveAsyDim} shows that containing isometric copies of $(\{0,\ldots, n\})_{n=1}^\infty$ imposes no restriction for asymptotic coarse Lipschitz equivalences. On the other hand, containing a coarse copy of $\N$ does impose some restriction. Indeed, it follows immediately from Corollary \ref{CorPointwiseExpanssion} that if $\N$ coarsely embeds into $X$ and $X$ is asymptotically coarse Lipschitz equivalent to $Y$, then there is a coarse (and therefore Lipschitz) map from $\N$ to $Y$ with unbounded image. It is easy to see that such a map from $\N$ to  $\{2^n\mid n\in\N\}$ does not exist. Therefore, $\{2^n\mid n\in\N\}$ cannot be asymptotically coarse Lipschitz equivalent to any metric space  containing   a coarse copy of $\N$. 
\end{remark}

\subsection{A more robust example}\label{SubsectionRobEx}

As we have seen in Example \ref{ExampleAsyCoarseLipEqNotPreserveAsyDim}, the notion of asymptotic coarse Lipschitz equivalence is strictly weaker than the one of coarse Lipschitz equivalence. In this subsection, we take a deeper look at this and accentuate this difference even further. In fact, Example \ref{ExampleAsyCoarseLipEqNotPreserveAsyDim} 
shows that there are metric spaces $X$ and  $Y$ which are not coarse Lipschitz equivalent, but are asymptotically coarse Lipschitz equivalent. However, the space $Y$ presented therein does not even coarsely embed into $X$. Moreover, it is easy to notice that, for such spaces, there is not even a coarse map $Y\to X$ which is also  uncollapsed.\footnote{A map $f:(X,d)\to (Y,\partial)$ is \emph{ uncollapsed} if there is $r>0 $ such that $\inf_{d(x,x')\geq r}\partial(f(x),f(x'))>0$ (cf. Remark \ref{RemarkAlmUnco}).}  In this subsection, we show that we can take asymptotic coarse Lipschitz equivalences to be much more rigid and still not imply that the metric spaces are even coarsely equivalent to each other (see Proposition \ref{PropWeakerEqForMetric}).

 Let $(X,d)$ and $(Y,\partial)$ be metric spaces, and let  $f:X\to Y$ be a map. For each $s>0$,  
\[\Exp_s(f)=\inf\left\{\frac{d(f(x),f(z))}{d(x,z)}\mid d(x,z)\geq s\right\}\]
(here we use the convention that $\inf\emptyset=\infty$), and we let
\[ \Exp_\infty(f)=\sup_{s>0}\Exp_s(f).\]
Notice that, since   $\Exp_s$ increases as $s$ increases, we have   $\Exp_\infty(f)=\lim_{s\to \infty}\Exp_s(f)$.

\begin{proposition}\label{PropAsympEquiImpliesCoarseEmb}
Let $X$ and $Y$ be metric spaces, and $f:X\to Y$ and $g:Y\to X$ be coarse Lipschitz maps. If   $\Exp_\infty(g\circ f)>0$,  then $f$ is a coarse Lipschitz embedding.\footnote{A completely analogous proof will show that if $f$ and $g$ are only assumed to be coarse, then $f$ is a coarse embedding.}
\end{proposition}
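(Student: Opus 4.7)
The plan is to extract a linear lower bound for $\partial(f(x),f(z))$ in terms of $d(x,z)$ by exploiting the expansion of $g\circ f$ on the one hand and the coarse Lipschitz character of $g$ on the other, and then match this with the coarse Lipschitz upper bound that $f$ already satisfies. By the definition of coarse Lipschitz embedding recalled in the paper (a coarse Lipschitz equivalence onto its image), it suffices to produce constants $L_0, M_0>0$ such that
\[
\tfrac{1}{L_0}d(x,z)-M_0\ \le\ \partial(f(x),f(z))\ \le\ L_0 d(x,z)+L_0
\]
for all $x,z\in X$.

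First, I would unpack the hypothesis $\Exp_\infty(g\circ f)>0$: since $\Exp_s(g\circ f)$ is nondecreasing in $s$ and its supremum is positive, there exist $s_0>0$ and $c>0$ with $\Exp_{s_0}(g\circ f)\ge c$; that is, for every $x,z\in X$ with $d(x,z)\ge s_0$ we have $d(g(f(x)),g(f(z)))\ge c\,d(x,z)$. Next, because $g$ is coarse Lipschitz, I fix $K>0$ so that $\partial(y,y')\mapsto d(g(y),g(y'))$ satisfies $d(g(y),g(y'))\le K\partial(y,y')+K$ for all $y,y'\in Y$. Chaining these two inequalities for $d(x,z)\ge s_0$ gives
\[
c\,d(x,z)\ \le\ d(g(f(x)),g(f(z)))\ \le\ K\,\partial(f(x),f(z))+K,
\]
which rearranges to $\partial(f(x),f(z))\ge \tfrac{c}{K}d(x,z)-1$.

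To get a uniform lower bound, I would handle the regime $d(x,z)<s_0$ trivially: there $\tfrac{c}{K}d(x,z)\le \tfrac{cs_0}{K}$, so $\partial(f(x),f(z))\ge 0 \ge \tfrac{c}{K}d(x,z)-\tfrac{cs_0}{K}$. Setting $M_0=\max\{1,cs_0/K\}$ yields $\partial(f(x),f(z))\ge \tfrac{c}{K}d(x,z)-M_0$ for all $x,z\in X$. Combining with the hypothesis that $f$ is coarse Lipschitz (so $\partial(f(x),f(z))\le L\,d(x,z)+L$ for some $L$), and taking $L_0=\max\{K/c,L,M_0\}$, we obtain the two-sided affine bound displayed above. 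Hence $f: X\to f(X)$ admits a coarse Lipschitz inverse (for instance any map sending each $y\in f(X)$ to a preimage under $f$), proving that $f$ is a coarse Lipschitz embedding.

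The proof is essentially routine once the hypothesis is read correctly; the only mild subtlety is not to forget the small-scale regime $d(x,z)<s_0$, where the multiplicative expansion of $g\circ f$ is not assumed and one must absorb the deficit into the additive constant. The footnote about coarse (rather than coarse Lipschitz) maps would be handled identically, replacing the affine upper bound on $\omega_g$ by the general function estimate $d(g(y),g(y'))\le \omega_g(\partial(y,y'))$ and exchanging affine bounds for arbitrary moduli throughout.
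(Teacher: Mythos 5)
Your proof is correct and rests on the same key estimate as the paper's: chaining the affine upper bound for $g$ against the expansion hypothesis $\Exp_\infty(g\circ f)>0$ to extract an affine lower bound on $\partial(f(x),f(z))$; the paper simply runs this contrapositively with sequences, while you argue directly with explicit constants and handle the small-scale regime by absorbing it into the additive constant. The concluding step of selecting preimages to produce a coarse Lipschitz inverse onto $f(X)$ is standard and follows at once from your two-sided bound, so there is no gap.
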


\begin{proof}
 If $f$ is not a coarse Lipschitz embedding, there are $(x_n)_n$ and $(z_n)_n$  in $X$ such that  
\[ \partial(f(x_n ),f(z_n))\leq \frac{1}{n} d(x_n,z_n)-n\]
for all $n\in\N$. In particular, $\lim_nd(x_n,z_n)=\infty$. As $g$ is coarse Lipschitz, there is $L>0 $ so that $\omega_g(t)\leq Lt+L$ for all $t>0$. Hence, 
\begin{align*}
\frac{d(g(f(x_n)),g(f(z_n)))}{d(x_n,z_n)}\leq  \frac{ L n^{-1} d(x_n,z_n) +L}{d(x_n,z_n)}\leq   \frac{ L}{n}  +\frac{L}{d(x_n,z_n)}\to 0,
\end{align*}
which contradicts that $\Exp_\infty(g\circ f)>0$. 
\end{proof}

\begin{remark}
Recall that the asymptotic dimension of a given metric space is always an upper bound for the asymptotic dimension of any metric space which coarsely embeds in it  (\cite[Proposition 2.2.4 and Theorem 2.2.5]{NowakYuBook}). Hence,  Example \ref{ExampleAsyCoarseLipEqNotPreserveAsyDim}  cannot be improved to hold for some asymptotic coarse Lispchitz equivalence also satisfying that $\Exp(f\circ g)>0$ and $\Exp(g\circ f)>0$.
\end{remark}

The next proposition shows that the asymptotically coarse Lipschitz equivalences can be much more rigid and still not force the spaces to be coarsely equivalent.

\begin{proposition}\label{PropWeakerEqForMetric}
There are metric spaces $X$ and $Y$ which are not coarsely equivalent but such that there is  an asymptotic coarse Lipschitz equivalence $f:X\to Y$ with asymptotic coarse Lipschitz inverse $g:Y\to X$ such that $\Exp(f\circ g)>0$ and $\Exp(g\circ f)>0$. In particular, both $f$ and $g$ are coarse Lipschitz embeddings and $\mathrm{AsyDim}(X)=\mathrm{AsyDim}(Y)$. 
\end{proposition}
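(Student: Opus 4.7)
My plan is to construct $X$ and $Y$ as explicit countable discrete metric spaces. By Proposition \ref{PropAsympEquiImpliesCoarseEmb}, the positivity of $\mathrm{Exp}_\infty(f\circ g)$ and $\mathrm{Exp}_\infty(g\circ f)$ automatically forces both $f$ and $g$ to be coarse Lipschitz embeddings, so $\mathrm{AsyDim}(X)=\mathrm{AsyDim}(Y)$ is already built into the conclusion. Consequently, the coarse invariant separating $X$ and $Y$ must be strictly finer than asymptotic dimension; this rules out a direct imitation of Example \ref{ExampleAsyCoarseLipEqNotPreserveAsyDim} and forces a more subtle construction.

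I would take both $X$ and $Y$ to be ``comb-like'' graphs sharing a common decoration pattern: each is the union of a spine (modelled on $\N$) together with finite ``teeth'' $D_k$ attached at sparse positions, where the sizes of the teeth grow slowly (sublinearly) in $k$. The teeth would be isomorphic finite metric spaces in $X$ and $Y$, matched bijectively by $f$ and $g$; this matching provides positive expansion of both compositions, since distances within any single tooth are then preserved by $f\circ g$ and $g\circ f$. The asymmetry distinguishing $X$ from $Y$ is placed entirely in the spines: the positions at which the teeth are attached will be arranged with different coarse growth rates in the two spaces (for instance, different ball-volume growth functions), so that $X$ and $Y$ have the same asymptotic dimension (namely one), but are separated by a finer coarse invariant such as volume growth.

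The maps $f\colon X\to Y$ and $g\colon Y\to X$ would send each tooth isometrically onto its counterpart in the other space and would send each spine point to the nearest spine point of the opposite space. Since the teeth are small relative to their spine positions, both maps will be coarse Lipschitz; since the teeth are preserved, the compositions do not collapse distances inside any tooth, yielding $\mathrm{Exp}_\infty(f\circ g)>0$ and $\mathrm{Exp}_\infty(g\circ f)>0$. The drift of $f\circ g$ from $\mathrm{Id}_Y$ (and similarly of $g\circ f$ from $\mathrm{Id}_X$) is governed by the maximal spine-spacing mismatch below a given radius, which by design is \emph{unbounded} but sublinear in the radius. This gives $f\circ g\sim_\infty \mathrm{Id}_Y$ and $g\circ f\sim_\infty \mathrm{Id}_X$, while the unboundedness of the drift prevents these maps from realizing $X$ and $Y$ as coarsely equivalent.

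The main obstacle is the inherent tension between demanding positive expansion of the compositions—which pushes $f$ and $g$ toward being bi-Lipschitz inverses and therefore coarse Lipschitz equivalences—and demanding that $X$ and $Y$ fail to be coarsely equivalent. The resolution is to place the asymmetry in a direction ``orthogonal'' to the tooth-matching scheme, so that $f$ and $g$ see only the teeth (which agree) while the spine drift absorbs the coarse discrepancy. The delicate calibration is the rate at which spine positions in $X$ and $Y$ diverge: it must be slow enough that the induced drift is sublinear in the basepoint distance (to give asymptotic closeness of the compositions), yet fast enough that a coarse invariant truly distinguishes $X$ from $Y$. Verifying these competing bounds simultaneously, rather than constructing either the maps or the spaces in isolation, is the step I expect to be the crux of the proof.
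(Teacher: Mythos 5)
Your construction has a structural flaw that no calibration of the teeth sizes or spine spacings can repair. As you describe the maps, every tooth of $Y$ lies exactly in $f(X)$ and the spine of $Y$ lies within uniformly bounded distance of the image of the spine of $X$, so $f$ is cobounded: $\sup_{y\in Y}d\big(y,f(X)\big)<\infty$. But you also impose $\Exp_\infty(g\circ f)>0$, which by Proposition \ref{PropAsympEquiImpliesCoarseEmb} forces $f$ to be a coarse Lipschitz embedding, and a cobounded coarse embedding is automatically a coarse equivalence (send each $y\in Y$ to a point of $X$ whose image is within the coboundedness constant of $y$). Hence your $X$ and $Y$ would be coarsely equivalent, which is precisely what the proposition must exclude. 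The only way to avoid coboundedness inside your scheme is to let the spine identification drift unboundedly away from the tooth matching (e.g.\ ``same coordinate'' on spines while the $k$-th attachment positions $a_k$ in $X$ and $b_k$ in $Y$ diverge); but then $f$ is not coarse Lipschitz, since the attachment point of the $k$-th tooth and a tooth point at distance $1$ from it are sent to points at distance at least $|a_k-b_k|-1$. Either way the plan collapses. In addition, the separating invariant you propose cannot work under your own constraints: coarse Lipschitz bounds in both directions together with the matched (sublinearly small) teeth pin the attachment positions of the two combs to each other up to affine error, so the two ball-volume functions have the same growth type; and, most importantly, you never actually prove non-coarse-equivalence, which is the crux of the statement.

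The missing idea, which is the heart of the paper's proof, is that $f(X)$ must fail to be coarsely dense in $Y$, with the extra part of $Y$ escaping from $f(X)$ at an unbounded but sublinear rate. The paper takes $X=\{(x+\log(y+1),y)\mid x,y\ge 0\}$ and $Y=X\cup(\{0\}\times[0,\infty))$, with $f$ the inclusion and $g$ the logarithmic shear $g(x,y)=(x+\log(y+1),y)$: the added ray sits at distance about $\log(y+1)$ from $X$, so $f\circ g\sim_\infty \mathrm{Id}_Y$, $g\circ f\sim_\infty\mathrm{Id}_X$ and both $\Exp_\infty$'s are positive, yet $f$ is not cobounded. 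Non-coarse-equivalence then requires a genuine argument: a hypothetical coarse equivalence $Y\to X$ must send the ray farther and farther from the image of the planar part, and restricting it to the ray yields a coarse equivalence between a space of asymptotic dimension $1$ and a subset of $X$ containing arbitrarily large balls of the two-dimensional piece, contradicting the coarse invariance of asymptotic dimension (alternatively, one can use the invariant of coarse connectedness at infinity). Your proposal contains no substitute for this final step.
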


\begin{proof}\label{PropWeakerEqForMetric}
Let \[X=\{(x+\log(y+1),y )\mid (x,y)\in [0,\infty)^2\}\] and   \[Y=X\cup \big(\{0\}\times [0,\infty)\big),\] and consider $X$ and $Y$ with their standard metrics inherited from $\R^2$. Let $f:X\to Y$ be the inclusion map and let $g:Y\to X$ be given by 
\[g(x,y)=(x+\log(y+1),y )\]
for all $(x,y)\in Y$. Being the inclusion, $f$ is coarse Lipschitz and, since the logarithm is a coarse Lipschitz function on $[1,\infty)$, so is $g$.

We now notice that $f$ and $g$ are asymptotic coarse Lipschitz inverses of  each other. Since
\begin{align*}
\|g\circ f(x,x')-(x,x')\|=\|(\log(x'+1)),0)\| \leq  \log (\|(x,x')\|+1)
\end{align*}
and $\lim_{s\to \infty}\log(s+1)/s=0$, 
it follows that  $ g\circ f\sim_\infty \mathrm{Id}_X$. Moreover, since 
\begin{align*}
\|g\circ f(x,x')-g\circ f(z,z')\|
\geq \|(x,x')-(z,z')\|-|\log(x'+1)-\log(z'+1)|,
\end{align*}
it also follows that $\Exp_\infty(g\circ f)>0$. Analogously,  we have    that $ f\circ g\sim_\infty \mathrm{Id}_Y$ and $\Exp_\infty(f\circ g)>0$. 

Let us now notice that $X$ and $ Y$ are not coarsely equivalent. Suppose towards a contradiction that there are coarse maps $f:X\to Y$ and $g:Y\to X$ such that $g\circ f\sim \mathrm{Id}_X$ and $f\circ g\sim \mathrm{Id}_Y$. Given $x\in X$ and $r>0$, we let $B(x,r)=\{z\in X\mid d(x,z)\leq r\}$. Since $g: Y\to X$ is a coarse equivalence, we have that for all $s>0$ there is $r>0$ such that \[\|(y,y')-(z,z')\|>s\ \text{ implies }\ \|g(y,y')-g(z,z')\|>r,\]
for all $(y,y'),(z,z')\in Y$. Therefore, we must have  \[\lim_{t\to \infty} d(g(0,t)-g(X))=\infty.\]
So, we can pick   a sequence $(y_n)_n$ of elements in $[0,\infty)$ such that 
\begin{equation}\label{Eq1pppp}
 B(g(0,y_n), n)\cap \big(g(Y)\setminus g(\{0\}\times [0,\infty))\big)=\emptyset
 \end{equation}
for all $n\in\N$. Let \[Z=g(\{0\}\times [0,\infty)) \cup \bigcup_n (B(g(0,y_n), n)\cap X).\]

Let $h=g\restriction \{0\}\times [0,\infty)$; so, $h$ is a coarse embedding. As $g$ is a coarse equivalence, its image is \emph{cobounded}, i.e., $\sup_{x\in X}d(x,g(Y))<\infty$. Therefore,  it follows from \eqref{Eq1pppp} that $h: \{0\}\times [0,\infty)\to Z$ is a coarse equivalence. This is a contradiction since $\{0\}\times [0,\infty)$ has asymptotic dimension $1$,  $Z$ has asymptotic dimension $2$, and coarse equivalences preserve asymptotic dimension (\cite[Theorem 2.2.5]{NowakYuBook}).
\end{proof}

\begin{remark}
We present here another approach to obtain that $X$ and $Y$ are not coarsely equivalent in the previous proposition. We can introduce the following coarse property: a metric space $(X,d)$ is said to be \emph{coarsely connected at infinity} if there is $s>0$ such that for all bounded $A\subset X$ there exists another bounded $B\subset X$ such that any $x,y\in X\setminus B$ can be connected by a discrete path $x_0=x,x_1,\ldots, x_n=y$ such that  $x_i\not\in A$ and   $d(x_i,x_{i-1})\leq s$ for all $i\in \{1,\ldots, n\}$.

It is easy to show that this is a coarse property, i.e., it is preserved by coarse equivalences. But while $X$ in the previous example has it, $Y$ does not. 
\end{remark}

Notice that both restrictions in the previous proposition and in the previous remark only work for metric spaces with asymptotic dimension at least 1 (indeed, the existence of the paths in the remark above implies either that $X$ is   bounded or that $X$ has asymptotic dimension at least 1). Moreover, Example \ref{ExampleAsyCoarseLipEqNotPreserveAsyDim}  falls in the same scenario. This justifies the following problem.

\begin{problem}
Let $X$ and $Y$ be asymptotically coarse Lipschitz equivalent metric spaces with asymptotic dimension zero. Does it follow that $X$ and $Y$ are coarse (Lipschitz) equivalent?  
\end{problem}

\section{Banach spaces and behavior under ultraproducts}\label{ultraproducts}

\subsection{Comparing asymptotic coarse Lipschitz and coarse Lipschitz equivalences}
 
The metric spaces of  main interest in this paper are Banach spaces. From now own, we will study the asymptotic coarse Lipschitz geometry of Banach spaces and show that several of the results present in the coarse Lipschitz theory are still valid under this weaker notion. As a warm up, we start with a simple result about asymptotic coarse Lipschitz equivalences which also satisfy the extra assumption that $\Exp(f\circ g)>0$ and $\Exp(g\circ f)>0$. 
 
 \begin{proposition}\label{PropFinDimStrongerNotion}
 Let $X$ and $Y$ be Banach spaces, and $f:X\to Y$ be an asymptotic coarse Lipschitz equivalence with asymptotic coarse Lipschitz inverse  $g:Y\to X$. Suppose also that   $\Exp(f\circ g)>0$ and $\Exp(g\circ f)>0$.   Then $\dim(X)=\dim(Y)$ and, if     $\dim(X)< \infty$,    $f$ (and $g$) are coarse Lipschitz equivalences.
 \end{proposition}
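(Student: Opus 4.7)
The plan is to apply Proposition \ref{PropAsympEquiImpliesCoarseEmb} in both directions to upgrade $f$ and $g$ to coarse Lipschitz embeddings, then use invariance of asymptotic dimension to deduce the dimension equality, and finally invoke a classical rigidity phenomenon for coarse Lipschitz embeddings of $\R^n$ into itself for the finite-dimensional conclusion.

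First I would apply Proposition \ref{PropAsympEquiImpliesCoarseEmb} twice: the assumption $\Exp(g\circ f)>0$ directly gives that $f$ is a coarse Lipschitz embedding, and symmetrically $\Exp(f\circ g)>0$ gives that $g$ is a coarse Lipschitz embedding. Since coarse Lipschitz embeddings preserve asymptotic dimension (\cite[Proposition 2.2.4 and Theorem 2.2.5]{NowakYuBook}), this yields $\mathrm{AsyDim}(X)=\mathrm{AsyDim}(Y)$. Combined with the two standard observations that $\mathrm{AsyDim}(Z)=\dim(Z)$ for any finite-dimensional Banach space $Z$ (since $Z$ is bi-Lipschitz to $\R^n$) and $\mathrm{AsyDim}(Z)=\infty$ for any infinite-dimensional Banach space (since it contains bi-Lipschitz copies of $\R^n$ for every $n$), this forces $\dim(X)=\dim(Y)$.

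For the finite-dimensional case, let $n:=\dim(X)<\infty$; composing $f$ and $g$ with linear isomorphisms, which are bi-Lipschitz in finite dimension, we may assume $X=Y=\R^n$. To conclude that $f$ is a coarse Lipschitz equivalence it suffices to show that $f(\R^n)$ is cobounded in $\R^n$: once coboundedness holds, we define $g':\R^n\to\R^n$ by selecting, for each $y$, a point $g'(y)$ with $\|f(g'(y))-y\|$ uniformly bounded, and the lower coarse Lipschitz estimate satisfied by $f$ ensures both that $g'$ is coarse Lipschitz and that $g'\circ f\sim \mathrm{Id}_X$ and $f\circ g'\sim \mathrm{Id}_Y$ in the closeness sense, so $f$ is a coarse Lipschitz equivalence (and $g$ is one by symmetry).

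The main obstacle is establishing the coboundedness of $f(\R^n)$ in $\R^n$, a classical rigidity statement for coarse Lipschitz embeddings of $\R^n$ into itself. My plan is to approximate $f$ by a continuous map $\tilde f:\R^n\to\R^n$ at bounded distance from $f$ (for instance via piecewise-affine interpolation on a fine net), thereby obtaining a continuous, proper, bi-coarse-Lipschitz self-map of $\R^n$. A Brouwer degree computation on the restriction $\tilde f|_{\bar B(0,R)}$, using the lower coarse Lipschitz estimate to confine preimages to a controlled ball and to compute the local degree at sufficiently interior values as $\pm 1$, shows that $\tilde f(\bar B(0,R))$ contains a ball whose radius grows linearly in $R$. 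Consequently $\tilde f$ is surjective, and therefore $f(\R^n)$ lies within $\sup_x\|\tilde f(x)-f(x)\|<\infty$ of all of $\R^n$, yielding the required coboundedness.
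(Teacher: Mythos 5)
Your overall route is the paper's: both claims are obtained by applying Proposition \ref{PropAsympEquiImpliesCoarseEmb} in both directions to upgrade $f$ and $g$ to coarse Lipschitz embeddings, and then, in the finite-dimensional case, by reducing to the statement that a coarse Lipschitz embedding of $\R^n$ into $\R^n$ is cobounded, from which a coarse Lipschitz inverse is built exactly as you describe. Your way of making the dimension equality explicit (monotonicity of $\mathrm{AsyDim}$ under coarse Lipschitz embeddings, $\mathrm{AsyDim}(\R^n)=n$, and infinite asymptotic dimension of infinite-dimensional Banach spaces) is a correct filling-in of a step the paper leaves implicit.

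The one genuine soft spot is your treatment of the coboundedness itself. The paper does not prove it; it cites \cite[Exercise 2.27]{Kapovich2014Lectures}. You instead sketch a Brouwer degree argument, and its key assertion --- that ``the local degree at sufficiently interior values is $\pm1$'' --- is unjustified. After replacing $f$ by a continuous approximation $\tilde f$, the lower coarse Lipschitz estimate only controls $\tilde f$ at large scales; at bounded scale $\tilde f$ can fold, preimages of a value need not be isolated, and local indices could a priori cancel. What actually has to be shown is that the total degree of the proper map $\tilde f$ (equivalently, of its extension to the one-point compactifications) is nonzero, and that is precisely the content of the cited exercise rather than a formality. A correct self-contained argument does exist --- for instance, if the image missed balls $B(y_k,r_k)$ with $r_k\to\infty$, rescaling by $r_k$ and passing to an ultralimit yields a bi-Lipschitz (hence continuous, injective and proper) embedding of $\R^n$ into $\R^n$ whose image misses a ball, contradicting invariance of domain --- or one can simply cite the fact as the paper does; but as written, your degree step is a gap.
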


\begin{proof}
By Proposition \ref{PropAsympEquiImpliesCoarseEmb},  both $f$ and $g$ are  coarse Lipschitz embeddings and $\dim(X)=\dim(Y)$  follows. If moreover $\dim(X)<\infty$ the result follows since any coarse Lipschitz embedding $h:\R^n\to \R^n$ is automatically \emph{cobounded}, i.e., $\sup_{y\in \R^n}d(y, h(\R^n))<\infty$ (see \cite[Exercise 2.27]{Kapovich2014Lectures}), and therefore a coarse Lipschitz equivalence.
\end{proof}

As we will see in Corollary \ref{CorFinDimWeakNotion} below, the first statement of the previous proposition also holds for asymptotic coarse Lipschitz equivalences. However, as seen in Example \ref{ExWeakCoarseEqNotCoarseEqBanSp},  the second does not. We conclude now this very short subsection with two questions.

\begin{problem} 
Let $X$ and $Y$ be two  asymptotically coarse Lipschitz equivalent Banach spaces. Are they necessarily coarse Lipschitz equivalent?
\end{problem}

\begin{problem}
Let $X$ and $Y$ be Banach spaces and let $f:X\to Y$ be as in Proposition \ref{PropFinDimStrongerNotion}. Must  $f$ be a coarse Lipschitz equivalence? If not, does it follow that the existence of such $f$ implies the existence of a coarse Lipschitz equivalence $X\to Y$?
\end{problem}

 \subsection{Ultrapowers and asymptotic coarse Lipschitz equivalences}
 
It is well known that coarse equivalences generate Lipschitz equivalences between ultraproducts. In the next lemma, we show that the same holds for asymptotic coarse Lipschitz equivalences. Recall,  if $X$ is a Banach space and $\cU$ is an ultrafilter on $\N$, we denote the ultraproduct of $X$ with respect to $\cU$ by $X^\N/\cU$. We refer the reader to \cite[Section 11.1]{AlbiacKaltonBook} for the theory of ultraproducts of Banach spaces.  
 
\begin{proposition}\label{lemmaUltrapower}
Let $X$ and $Y$ be Banach spaces and  $\mathcal{U}$ be a nonprincipal  ultrafilter on $\N$. Let $f: X\to Y$ and $g: Y\to X$ be coarse Lipschitz maps such that $ f\circ g\sim_\infty \mathrm{Id}_Y$. Define $F:X^\N/\mathcal{U}\to Y^\N/\mathcal{U}$ and $G:Y^\N/\mathcal{U}\to X^\N/\mathcal{U}$ by
\[F([(x_n)_n])=\Big[\Big(\frac{f(nx_n)}{n}\Big)_n\Big],\ \text{ for all }\ [(x_n)_n]\in X^\N/\mathcal{U}\]
\[G([(y_n)_n])=\Big[\Big(\frac{g(ny_n)}{n}\Big)_n\Big],\ \text{ for all }\ [(y_n)_n]\in Y^\N/\mathcal{U}.\]
Then $F$ and $G$ are Lipschitz and $F \circ G=\mathrm{Id}_{Y^\N/\mathcal{U}}$.

In particular, $F$ is surjective and if, furthermore, we  have that $  g\circ f\sim_\infty \mathrm{Id}_X$, then $F$ is a Lipschitz equivalence with Lipschitz inverse $G$.
\end{proposition}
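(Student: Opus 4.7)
The argument has two halves: first verifying that $F$ and $G$ are well-defined Lipschitz maps, then verifying the composition identity $F\circ G=\mathrm{Id}$. For the first half, I would fix a common constant $L>0$ such that $\omega_f(t),\omega_g(t)\le Lt+L$. For any two bounded sequences $(x_n)_n,(x'_n)_n$ in $X$,
\[
\Big\|\frac{f(nx_n)}{n}-\frac{f(nx'_n)}{n}\Big\|\le \frac{Ln\|x_n-x'_n\|+L}{n}=L\|x_n-x'_n\|+\frac{L}{n},
\]
and the estimate $\|f(nx_n)/n\|\le L\|x_n\|+(L+\|f(0)\|)/n$ shows that $(f(nx_n)/n)_n\in\ell_\infty(Y)$. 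Applying the displayed inequality with $\lim_\mathcal{U}\|x_n-x'_n\|=0$ yields that $F$ depends only on the class $[(x_n)_n]$; applying it with arbitrary representatives yields $\|F([(x_n)])-F([(x'_n)])\|\le L\,\|[(x_n)]-[(x'_n)]\|$, so $F$ is $L$-Lipschitz. The same estimates apply verbatim to $G$.

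For the composition identity, unwrapping the definitions gives
\[
F(G([(y_n)_n]))=\Big[\Big(\frac{f(g(ny_n))}{n}\Big)_n\Big],
\]
so it suffices to prove $\lim_\mathcal{U}\|f(g(ny_n))/n-y_n\|=0$. Here I would split according to whether $c:=\lim_\mathcal{U}\|y_n\|$ is zero. If $c=0$, then $[(y_n)_n]=0$ and one evaluates on the constant-zero representative, for which both $G$ and $F$ map $0$ to $0$. If $c>0$, then $A:=\{n:\|y_n\|\ge c/2\}\in\mathcal{U}$, and for $n\in A$ we have $\|ny_n\|\ge nc/2\to\infty$. Given $\varepsilon>0$, the hypothesis $f\circ g\sim_\infty\mathrm{Id}_Y$, evaluated at $0\in Y$, supplies an $R>0$ with $\|f(g(y))-y\|\le\varepsilon\|y\|$ whenever $\|y\|>R$; substituting $y=ny_n$ and dividing by $n$ gives
\[
\Big\|\frac{f(g(ny_n))}{n}-y_n\Big\|\le\varepsilon\|y_n\|\le\varepsilon M
\]
on the $\mathcal{U}$-large set $A\cap\{n\ge 2R/c\}$, where $M$ is any uniform bound for $(\|y_n\|)_n$. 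Arbitrariness of $\varepsilon$ closes the argument.

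Surjectivity of $F$ then follows trivially from $F\circ G=\mathrm{Id}$. Under the additional hypothesis $g\circ f\sim_\infty\mathrm{Id}_X$, the symmetric argument produces $G\circ F=\mathrm{Id}_{X^\N/\mathcal{U}}$, exhibiting $F$ as a bijective Lipschitz map with Lipschitz inverse $G$. The main subtlety is the case split in the composition identity: asymptotic closeness is a statement only at infinity, so one must arrange that the vectors at which $f\circ g$ is evaluated have norm tending to infinity. The dilation by $n$ built into the definition of $F$ and $G$ accomplishes exactly this whenever the class is nonzero, and the zero class has to be handled separately because that dilation does not escape the bounded region where $\sim_\infty$ has no content.
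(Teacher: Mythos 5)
Your proposal is correct and follows essentially the same route as the paper: the same ultraproduct estimates for well-definedness and Lipschitzness (which the paper leaves as routine), and the same key step of applying $f\circ g\sim_\infty\mathrm{Id}_Y$ to $ny_n$ on a $\mathcal{U}$-large set where $\|ny_n\|\to\infty$, with the zero class handled separately exactly as in the paper. No gaps.
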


\begin{proof} Notice that, as $f$ and $g$ are coarse  Lipschitz maps, it follows easily that $F$ and $G$ are well defined and Lipschitz. We only need to show that  $F \circ G=\mathrm{Id}_{Y^\N/\mathcal{U}}$, as the rest of the statement will clearly follow.  It is also obvious that $F \circ G(0)=0$. So let $[(y_n)_n]\in Y^\N/\mathcal{U}$, so that $[(y_n)_n]\neq 0$ and fix $\eps>0$.  

As $f\circ g\sim_\infty \mathrm{Id}_Y$, there exists $r>0 $  such that for all $y \in Y$ with $\|y\|>r$ we have $\|f(g(y))-y\|\le \eps\|y\|$. Since $\lim_{n,\mathcal{U}}\|y_n\|>0$, there exists $A \in \mathcal{U}$ such that $\|f(g(ny_n))-ny_n\|\le \eps\|ny_n\|$, for all $n\in A$. Unfolding definitions, we have that  \[F(G([(y_n)_n]))-[(y_n)_n]=\Big[\Big(\frac{f(g(ny_n))}{n}-y_n\Big)_n\Big]=\Big[\Big(\frac{f(g(ny_n))-ny_n}{n}\Big)_n\Big].\] 
Therefore, 
\[\Big\|F(G([(y_n)_n]))-[(y_n)_n]\Big\|=\lim_{n,\cU}\left\|\frac{f(g(ny_n))-ny_n}{n}\right\|\leq \eps \|[(y_n)_n]\|.\]
As $\eps>0$  was arbitrary, this shows that $F\circ G([(y_n)_n])=[(y_n)_n]$. 
\end{proof}

Unlike coarse Lipschitz equivalences, the range of an asymptotic coarse Lipschitz equivalence does not need to be $\delta$-dense in its codomain for some $\delta>0$ (see Example \ref{ExampleAsyCoarseLipEqNotPreserveAsyDim} or Proposition \ref{PropAsympEquiImpliesCoarseEmb}). The next corollary gives some information on what the range of an asymptotic coarse Lipschitz equivalence must be. 

\begin{corollary}
Let $X$ and $Y$ be Banach spaces, and   $f: X\to Y$ be an asymptotic coarse Lipschitz equivalence. Then $\bigcup_{n\in \N}\frac1nf(X)$ is dense in $Y$.
\end{corollary}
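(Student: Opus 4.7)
The plan is to leverage Proposition \ref{lemmaUltrapower} directly. Since $f$ is an asymptotic coarse Lipschitz equivalence, there is some coarse Lipschitz $g:Y\to X$ with $f\circ g\sim_\infty \mathrm{Id}_Y$ and $g\circ f\sim_\infty \mathrm{Id}_X$. Fix a nonprincipal ultrafilter $\cU$ on $\N$ and form the induced maps $F:X^\N/\cU\to Y^\N/\cU$ and $G:Y^\N/\cU\to X^\N/\cU$ as in the proposition. The key consequence I will use is that $F$ is surjective (in fact, a Lipschitz equivalence).

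Given any $y\in Y$, consider its image under $F^{-1}$ applied to the constant class $[(y)_n]\in Y^\N/\cU$: by surjectivity there exists $[(x_n)_n]\in X^\N/\cU$ with $F([(x_n)_n])=[(y)_n]$. Unwinding the definition of $F$, this means
\[\lim_{n,\cU}\left\|\frac{f(nx_n)}{n}-y\right\|=0.\]
Hence for every $\varepsilon>0$ the set $A_\varepsilon=\{n\in\N:\|f(nx_n)/n - y\|<\varepsilon\}$ belongs to $\cU$ and is in particular nonempty. Picking any $n\in A_\varepsilon$ and setting $x:=nx_n\in X$ produces an element $\tfrac{1}{n}f(x)\in \tfrac{1}{n}f(X)$ at distance less than $\varepsilon$ from $y$. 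Since $\varepsilon$ and $y$ were arbitrary, this gives the density of $\bigcup_{n\in\N}\tfrac{1}{n}f(X)$ in $Y$.

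There is essentially no obstacle here once Proposition \ref{lemmaUltrapower} is in hand; the entire argument is a one-line consequence of the surjectivity of $F$. The only small point to be careful about is the case $y=0$, which is trivial since $0\in \tfrac{1}{n}f(X)$ may not hold, but one can simply apply the above to any nonzero $y$ arbitrarily close to $0$, or note that $\|f(nx_n)/n\|\to 0$ along $\cU$ when $y=0$ and extract $n$ the same way. No other estimates are needed.
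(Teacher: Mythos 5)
Your argument is correct and is essentially identical to the paper's own proof: both apply Proposition \ref{lemmaUltrapower} to the constant class $[(y)_n]$, use surjectivity of $F$, and unwind $\lim_{n,\cU}\|\frac1n f(nx_n)-y\|=0$ to extract, for each $\eps>0$, an index $n$ with $\frac1n f(nx_n)$ within $\eps$ of $y$. The worry about $y=0$ is unnecessary, since the same extraction works verbatim when $y=0$.
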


\begin{proof}
Let $F$ be as in Proposition \ref{lemmaUltrapower}. Let $y\in Y$. As $F:X^\N/\mathcal{U}\to Y^\N/\mathcal{U}$ is surjective, there exists $[(x_n)_n]\in X^\N/\mathcal{U}$ such that $F([(x_n)_n])=[(y)_n]\in   Y^\N/\mathcal{U}$. Hence, $\lim_{n,\mathcal{U}}\|\frac1nf(nx_n)-y\|=0$. So, $\bigcup_{n\in \N}\frac1nf(X)$ is dense in $Y$. 
\end{proof}

 \begin{corollary}\label{CorFinDimWeakNotion}
 Let $X$ and $Y$ be asymptotically coarse Lipschitz equivalent Banach spaces. 
 \begin{enumerate}
\item\label{CorFinDimWeakNotion.Item1} If   $\dim(X)< \infty$, then $\dim(X)=\dim(Y)$.
 \item\label{CorFinDimWeakNotion.Item2} If $Y=\ell_2$, then $X$ is isomorphic to $\ell_2$.
\end{enumerate}
 \end{corollary}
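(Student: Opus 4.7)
The plan is to derive both parts from Proposition~\ref{lemmaUltrapower}: the hypothesized asymptotic coarse Lipschitz equivalence yields Lipschitz maps $F:X^\N/\cU\to Y^\N/\cU$ and $G:Y^\N/\cU\to X^\N/\cU$ with $F\circ G=\mathrm{Id}$ and $G\circ F=\mathrm{Id}$, i.e., a bi-Lipschitz equivalence between the ultraproducts.

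For \eqref{CorFinDimWeakNotion.Item1}, if $\dim X<\infty$ then $X^\N/\cU$ is canonically isometric to $X$, so $Y^\N/\cU$ is bi-Lipschitz equivalent to the finite-dimensional space $X\cong\R^n$. Restricting $G$ to the constant-sequence isometric copy of $Y$ inside $Y^\N/\cU$ gives a Lipschitz map $Y\to X$ whose composition with $F$ is the canonical isometric embedding $Y\hookrightarrow Y^\N/\cU$; hence $Y$ bi-Lipschitz embeds into $X$. Since the unit ball of an infinite-dimensional Banach space fails to be totally bounded whereas bounded subsets of $\R^n$ are, $Y$ must also be finite-dimensional. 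Reversing the roles of $X$ and $Y$ shows $X$ bi-Lipschitz embeds into $Y$, and invariance of domain then yields $\dim X=\dim Y$.

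For \eqref{CorFinDimWeakNotion.Item2}, with $Y=\ell_2$ the ultrapower $\ell_2^\N/\cU$ is itself a Hilbert space, so $X^\N/\cU$ is Lipschitz equivalent to a Hilbert space. By the Heinrich--Mankiewicz theorem (equivalently, by Ribe's rigidity theorem combined with Kwapie\'{n}'s characterization of Hilbert spaces), $X^\N/\cU$ is linearly isomorphic to a Hilbert space, and since $X$ embeds isometrically in $X^\N/\cU$ via constant sequences, $X$ itself is a Hilbert space, say $X\cong\ell_2(I)$. It only remains to show $I$ is countable. Suppose otherwise and pick an uncountable orthonormal family $(e_\gamma)_{\gamma\in\Gamma}$ in $X$. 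Fix $\eps\in(0,\tfrac{\sqrt{2}}{2})$ and, using the uniform-in-$x$ estimate $g\circ f\sim_\infty\mathrm{Id}_X$ at base point $0$, choose $R_0$ so that $\|g(f(x))-x\|\le\eps\|x\|$ for all $\|x\|\ge R_0$. For any $R\ge R_0$ the coarse Lipschitzness of $f$ forces $\{f(Re_\gamma)\}_\gamma$ into a ball of $\ell_2$ of radius $O(R)$, and separability of $\ell_2$ produces distinct $\gamma,\gamma'\in\Gamma$ with $\|f(Re_\gamma)-f(Re_{\gamma'})\|\le 1$; coarse Lipschitzness of $g$ then bounds $\|g(f(Re_\gamma))-g(f(Re_{\gamma'}))\|$ by an absolute constant, while the choice of $R_0$ forces $\|g(f(Re_\gamma))-g(f(Re_{\gamma'}))\|\ge R(\sqrt{2}-2\eps)$. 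Taking $R$ large enough produces a contradiction, so $|I|\le\aleph_0$ and $X\cong\ell_2$.

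The main obstacle is this final separability step. Corollary~\ref{CorPointwiseExpanssion} only yields expansion of $f$ from a single base point, and Example~\ref{ExWeakCoarseEqNotCoarseEqBanSp} shows that $f$ need not be a coarse embedding or even almost uncollapsed. The key observation is that the convergence defining $g\circ f\sim_\infty\mathrm{Id}_X$ is uniform in $x$, which is precisely what upgrades single-point expansion to the needed pairwise lower bound along the scaled orthonormal family.
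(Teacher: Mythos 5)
Your proof is correct and takes essentially the paper's route: both parts are deduced from Proposition \ref{lemmaUltrapower}, reducing to a Lipschitz equivalence of ultrapowers and then invoking classical Lipschitz rigidity (invariance of dimension for \eqref{CorFinDimWeakNotion.Item1}, Enflo/Heinrich--Mankiewicz for \eqref{CorFinDimWeakNotion.Item2}), the paper's own proof being a terser version of exactly this. Your separability argument in \eqref{CorFinDimWeakNotion.Item2} spells out a point the paper leaves implicit and is sound; just read ``orthonormal'' through the isomorphism $X\cong\ell_2(I)$ (which only changes the separation and boundedness constants, so the same contradiction for large $R$ goes through), and note that $I$ must be infinite by part \eqref{CorFinDimWeakNotion.Item1}, so the conclusion $X\cong\ell_2$ stands.
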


\begin{proof}
This follows immediately from Proposition \ref{lemmaUltrapower}. Indeed, if $\dim(X)<\infty$, then all of its ultraproducts also have dimension $\dim(X)$ and the dimension is preserved by Lipschitz equivalences. So, \eqref{CorFinDimWeakNotion.Item1} follows. 

For \eqref{CorFinDimWeakNotion.Item2}, recall that the ultrapower of $\ell_2$ is still a Hilbert space  and that being isomorphic to a Hilbert space is preserved by Lipschitz equivalences (see \cite[Theorem 6.3.1]{Enflo1970Israel}).
\end{proof}

 \section{Asymptotic coarse Lipschitz structure of $\ell_p$, $p\in (2,\infty)$. }\label{SectionAsyCLStruclp}
As shown in Corollary \ref{CorFinDimWeakNotion}, the asymptotic coarse Lipschitz structure of $\ell_2$ completely determines its isomorphic structure. In this subsection, we show that the same holds for $\ell_p$ for any $p \in (2,\infty)$. This is the missing part of Theorem \ref{ThmAsyCLStructurelpINTRO} of Section \ref{SectionIntro} and it is proved below as   Corollary \ref{CorAsyCLStructurelp}. In fact, the same holds for $\ell_{p_1}\oplus\ldots\oplus \ell_{p_n}$ for any $p_1,\ldots,p_n \in (2,\infty)$  (see Theorem \ref{ThmAsyCLStructurelp}). 

Asymptotic uniform smoothness will play an important role in the proof of Theorem \ref{ThmAsyCLStructurelp} below. We refer the reader to Section \ref{SectionIntro} for the precise definitions of \emph{asymptotic uniform smoothness}, abbreviated as AUS, and \emph{$p$-asymptotic uniform smoothness}, abbreviated as $p$-AUS.  For this section, however, it will be enough for the reader to know that, for $p\in (1,\infty)$,   $\ell_p$ is $p$-AUS. Moreover, if $1<p_1<p_2<\ldots< p_n<\infty$, then $\bigoplus_{i=1}^n\ell_{p_i}$ is $p_1$-AUS.

As demonstrated by N. Kalton and L. Randrianarivony \cite{KaltonRandrianarivony2008},  the Hamming  graphs are of great help when studying $p$-AUS spaces. Given an infinite $\M\subset \N$ and $k\in\N$, we let $[\M]^{k}$ be the set of all subsets of $\M$ with exactly $k$ elements and we denote each element of $[\M]^{k}$ as a tuple in an increasing order, i.e., given $\bar n\in [\M]^{k}$, we write $\bar n=(n_1,\ldots,n_k)$ where $n_1<\ldots<n_k$. The \emph{Hamming metric of $[\M]^{k}$} is the metric $d_{\mathrm H}$ given by 
\[d_{\mathrm H}(\bar n,\bar m)=|\{i\in \{1,\ldots, k\}\mid n_i\neq m_i\}|\]
for all $\bar n,\bar m\in [\M]^{k}$. 

The following relation between the Hamming metric and AUSness was found by N. Kalton and L. Randrianarivony:

\begin{lemma}\emph{(}\cite[Theorem 4.2]{KaltonRandrianarivony2008}\emph{).}
Let $p\in (1,\infty)$ and suppose   $Y$ is  a reflexive $p$-AUS Banach space. Then there is $C>0$ such that for all $k\in\N$ and all  Lipschitz maps  $f:[\N]^{k}\to Y$, there is an infinite $\M\subset \N$ such that
\[\mathrm{diam}(f([\M]^{k}))\leq C\Lip(f)k^{1/p}.\]
\label{LemmaThmKR4.2}
\end{lemma}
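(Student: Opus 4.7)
The plan is to follow the approach of Kalton and Randrianarivony: combine a martingale-type inequality for weakly null sequences, furnished by $p$-AUSness, with a Ramsey-style stabilization that exploits reflexivity.

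First, I would extract the key analytic input from $p$-AUSness. Since $\bar\rho_Y(\tau)\leq C_0\tau^p$ for small $\tau$, and since weakly null sequences can be approximated by tails supported on arbitrarily large finite-codimensional subspaces (using reflexivity), one obtains the following Kalton-type inequality: there is $C_1>0$ such that for every $y\in Y$ and every bounded weakly null sequence $(y_n)_n$ in $Y$,
$$\limsup_n \|y+y_n\|^p \leq \|y\|^p + C_1 \limsup_n \|y_n\|^p.$$
This is the precise form in which $p$-AUSness will enter, and it is where reflexivity is really needed.

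Second, I would perform a recursive Ramsey/weak-compactness extraction. After rescaling assume $\Lip(f)=1$, so that $\|f(\bar n)-f(\bar m)\|\leq d_{\mathrm H}(\bar n,\bar m)$. By a diagonal construction over the $k$ coordinate positions, one produces an infinite $\M\subset\N$ such that for every $1\leq i\leq k$ and every admissible tuple $n_1<\cdots<n_{i-1}<n_{i+1}<\cdots<n_k$ in $\M$, the partial map $m\mapsto f(n_1,\ldots,n_{i-1},m,n_{i+1},\ldots,n_k)$ is weakly convergent as $m$ ranges over the tail of $\M$ between $n_{i-1}$ and $n_{i+1}$. The Ramsey ingredient handles the combinatorial dependence on the tuple (selecting ``colors'' that make weak limits behave uniformly), while weak sequential compactness of bounded sets in $Y$ (reflexivity) produces the weak limits at each level.

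Third, I would bound the diameter by coordinate-wise interpolation. Given $\bar n,\bar m\in[\M]^k$, choose an auxiliary $\bar\ell\in[\M]^k$ whose entries are vastly larger than those of $\bar n$ and $\bar m$. Interpolate from $\bar n$ to $\bar\ell$ by swapping one coordinate at a time, producing a chain of $k$ tuples each at Hamming distance $1$ from the next. By the stabilization of the second step, each successive increment becomes arbitrarily close to a weakly null vector (as the coordinates of $\bar\ell$ tend to infinity through $\M$), with norm at most $1$ by the Lipschitz bound. Iterating the Kalton inequality of the first step exactly $k$ times along this chain yields $\|f(\bar n)-f(\bar\ell)\|^p\leq C_1 k$, and symmetrically for $\bar m$. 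The triangle inequality then gives $\|f(\bar n)-f(\bar m)\|\leq 2(C_1 k)^{1/p}$, establishing the diameter estimate with $C=2C_1^{1/p}$.

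The main obstacle is engineering the Ramsey extraction so that all $k$ coordinate directions become simultaneously weakly-null-compatible; this requires iterating the extraction $k$ times through thinner infinite subsets while preserving the Lipschitz control that feeds back into the $p$-AUS estimate. A subtlety in the final step is that the successive increments are not exactly weakly null but only become so along appropriate tails of $\M$, so one must apply the inequality of the first step to limits along those tails and verify by induction on $k$ that the $\ell_p$-type aggregation of $k$ unit-sized weakly null perturbations genuinely produces the exponent $1/p$ rather than $1$ — this is where the sharp dependence on the $p$-AUS exponent materializes.
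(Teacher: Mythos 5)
Two preliminary remarks: the paper does not prove this lemma at all — it is quoted verbatim from Kalton--Randrianarivony — so the relevant comparison is with their original argument; and your steps 1 and 2 do follow that argument (the $\ell_p$-type upper estimate $\limsup_n\|y+y_n\|^p\le\|y\|^p+C_1\limsup_n\|y_n\|^p$ coming from $\bar\rho_Y(\tau)\le C_0\tau^p$, plus extraction of weak limits). Note, though, that reflexivity is what guarantees the existence of the weak limits in step 2; the inequality in step 1 only needs weak nullity of $(y_n)$ and the modulus estimate.

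The genuine gap is in your third step. The increments along your Hamming chain from $\bar n$ to a far-away $\bar\ell$ are \emph{not} approximately weakly null, not even along tails of $\M$: writing $\partial f(n_1,\dots,n_{k-1})$ for the weak limit of $f(n_1,\dots,n_{k-1},m)$ as $m\to\infty$ in $\M$ (this exists after your step 2), the first increment $f(n_1,\dots,n_{k-1},\ell_k)-f(n_1,\dots,n_k)$ converges weakly to $\partial f(n_1,\dots,n_{k-1})-f(\bar n)$, which is in general a nonzero vector of norm up to $\Lip(f)$. Being weakly close to a weakly null net is not a norm statement, so it cannot be fed into the inequality of step 1, and "iterating the Kalton inequality along the chain" fails as described; your closing caveat ("the increments become weakly null along appropriate tails") is precisely the point that is false. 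The correct iteration, as in Kalton--Randrianarivony, compares $f(\bar n)$ with the iterated weak limit $\partial^k f$ rather than with $f(\bar\ell)$: one proves by induction on $k$ that, after passing to a suitable infinite subset, $\|f(\bar n)-\partial^k f\|^p\le C_1 k\,\Lip(f)^p+\eps$ for all $\bar n$, where at each stage the weakly null vector is the deviation $f(n_1,\dots,n_k)-\partial f(n_1,\dots,n_{k-1})$ (of norm at most $\Lip(f)$ by weak lower semicontinuity), and the fixed vector $y$ is $\partial f(n_1,\dots,n_{k-1})-\partial^k f$, controlled by the inductive hypothesis applied to $\partial f$, which is again $\Lip(f)$-Lipschitz for $d_{\mathrm H}$. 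The diameter bound then follows by the triangle inequality through $\partial^k f$. The auxiliary tuple $\bar\ell$ cannot serve as a substitute for $\partial^k f$: $f(\bar\ell)$ converges to $\partial^k f$ only weakly, and bounding $\|f(\bar n)-f(\bar\ell)\|$ by $Ck^{1/p}\Lip(f)$ is exactly the statement you are trying to prove, so the detour through $\bar\ell$ gains nothing.
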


\begin{theorem}\label{TheoremNol2}
Let $1\leq r< p_1< \ldots< p_n<\infty$. If a Banach space $X$ is asymptotically coarse Lipschitz equivalent to $\bigoplus_{k=1}^n\ell_{p_k}$, then $\ell_r$ does not   coarse Lipschitz embed  into $X$. 
\end{theorem}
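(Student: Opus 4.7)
The plan is to derive a contradiction by combining the Kalton--Randrianarivony concentration on Hamming graphs (Lemma \ref{LemmaThmKR4.2}) with the asymptotic expansion supplied by Proposition \ref{PropAlmCLInv}. Assume for contradiction that $\phi\colon \ell_r \to X$ is a coarse Lipschitz embedding, and let $f\colon X\to Y:=\bigoplus_{k=1}^n \ell_{p_k}$ be an asymptotic coarse Lipschitz equivalence with asymptotic coarse Lipschitz inverse $g\colon Y\to X$. Since each $\ell_{p_k}$ is reflexive and $p_k$-AUS with $p_k\ge p_1$, the direct sum $Y$ is reflexive and $p_1$-AUS, so Lemma \ref{LemmaThmKR4.2} applies to maps into $Y$.

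For each $k\in\N$ I will view $\ell_r$ as indexed by $\N\times\N$ and define $\psi_k\colon [\N]^k\to \ell_r$ by $\psi_k(\bar n)=\sum_{i=1}^k e_{i,n_i}$, so that $\|\psi_k(\bar n)-\psi_k(\bar m)\|_r=(2d_{\mathrm H}(\bar n,\bar m))^{1/r}$ and $\|\psi_k(\bar n)\|_r=k^{1/r}$. Setting $F_k:=f\circ\phi\circ\psi_k$ and using that $\phi$ and $f$ are coarse Lipschitz with constants independent of $k$, the $Y$-distance between the images of adjacent tuples in the Hamming graph is bounded by a uniform constant $C$, and a triangle-inequality argument along Hamming paths upgrades this to $\Lip(F_k)\le C$ for the Hamming metric. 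Lemma \ref{LemmaThmKR4.2} then yields, for each $k$, an infinite $\M_k\subset \N$ with
\[\mathrm{diam}(F_k([\M_k]^k))\le C'\, k^{1/p_1}.\]

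On the other hand, I choose $\bar n,\bar m\in[\M_k]^k$ with $d_{\mathrm H}(\bar n,\bar m)=k$, which is possible since $\M_k$ is infinite. Writing $x_0:=\phi(0)$, $x:=\phi(\psi_k(\bar n))$, $x':=\phi(\psi_k(\bar m))$, the coarse Lipschitz embedding constants of $\phi$ give $\max\{\|x-x_0\|,\|x'-x_0\|\}\le Lk^{1/r}+L$ while $\|x-x'\|\ge L^{-1}(2k)^{1/r}-L$. After fixing $\theta$ slightly below $2^{1/r}/L^2$, the pair $(x,x')$ satisfies the hypothesis of Proposition \ref{PropAlmCLInv} at basepoint $x_0$ for all large $k$. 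That proposition (applied to $f$ and $g$) supplies a constant $L'$ with
\[\|F_k(\bar n)-F_k(\bar m)\|_Y\ge \tfrac{1}{L'}\|x-x'\|-L'\ge c\,k^{1/r}-O(1),\]
for some $c>0$ and all large $k$. Comparing with the diameter bound yields $c\,k^{1/r}\le C'\,k^{1/p_1}+O(1)$, which is impossible as $k\to\infty$ since $1/r>1/p_1$.

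The main subtle point is checking the ratio hypothesis of Proposition \ref{PropAlmCLInv}; the rest is a fairly mechanical application of the $p_1$-AUS concentration. The gap $2^{1/r}$ between the growth $(2k)^{1/r}$ of the mutual distance and the growth $k^{1/r}$ of the basepoint distance is precisely what survives the multiplicative losses from the coarse Lipschitz distortion of $\phi$, which is what permits a positive $\theta$, chosen independently of $k$, to drive the whole argument through.
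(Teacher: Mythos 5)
Your proof is correct and follows essentially the same route as the paper's: compose the hypothetical coarse Lipschitz embedding of $\ell_r$ with the asymptotic coarse Lipschitz equivalence $f$, get the upper bound $\lesssim k^{1/p_1}$ from the Kalton--Randrianarivony concentration (Lemma \ref{LemmaThmKR4.2}), and get the lower bound $\gtrsim k^{1/r}$ from Proposition \ref{PropAlmCLInv} applied with a fixed basepoint and a fixed $\theta$, which the $2^{1/r}$ gap between mutual and basepoint distances makes possible. The differences (the doubly-indexed $\psi_k$, working at basepoint $\phi(0)$ rather than normalizing the embedding, choosing $\theta$ just below $2^{1/r}/L^2$) are cosmetic.
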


\begin{proof}
To simplify   notation, let  $Y=\bigoplus_{k=1}^n\ell_{p_k}$. Assume $X$ is asymptotically coarse Lipschitz equivalent to $Y$ and let $f:X\to Y$ be such an equivalence. Suppose towards a contradiction that there is a coarse Lipschitz embedding $g:\ell_r\to X$. Replacing $g$ by an appropriate translation of itself if necessary, we can assume that $g(0)=0$. Moreover, as $g$ is a coarse Lipschitz embedding, replacing $g$ by $\alpha g(\beta \cdot)$ for appropriate $\alpha,\beta>0$, we can also assume that there is $L>1$ for which 
\[ \|z-z'\|\leq  \|g(z)-g(z')\|\leq L\|z-z'\|\ \text{ for all }\ z,z'\in \ell_r\ \text{ with }\ \|z-z'\|\geq 1.\]
 As $f$ is coarse Lipschitz, replacing $L$ by a larger number if necessary, we can also assume that 
\[\|f(x)-f(x')\|\leq L\|x-x'\|\ \text{ for all }\ x,x'\in X\ \text{ with }\ \|x-x'\|\geq 1.\]

 For each $k\in \N$, define $\varphi_k:[\N]^{k}\to \ell_r$ as 
\[\varphi_k(\bar n)=\sum_{i=1}^ke_{n_i} \ \text{ for all } \bar n\in [\N]^{k}.\]
Notice that $\varphi_k$ is $2^{1/r}$-Lipschitz and that its image is $1$-separated. So, by our choice of $L$, we have that 
\[\Lip(f\circ g\circ \varphi_k)\leq  2^{1/r}L^2\ \text{  for all }\ k\in\N.\] 
Therefore, as $Y$ is $p_{1}$-AUS, it follows from Lemma \ref{LemmaThmKR4.2} that there is $C>0$ such that, for each $k\in\N$, there is an infinite $\M_k\subset \N$ such that  
\begin{equation}\label{Eqkarb1}
\mathrm{diam}(f\circ g\circ \varphi_k([\M_k]^{k}))\leq C 2^{1/r}L^2k^{1/p_1}.
\end{equation}
For a fixed $k\in\N$,  pick  $\bar n,\bar m\in [\M_k]^{k}$ with $n_k<m_1$. Then 
\[\|g(\varphi_k(\bar n))-g(\varphi_k(\bar m))\|\geq 2^{1/r} k^{1/r}\]
and, as we assume that $g(0)=0$, 
\[ \|g(\varphi_k(\bar n))\|\leq L k^{1/r} \ \text{ and } \   \|g(\varphi_k(\bar m))\|\leq L k^{1/r}. \]
Applying Proposition \ref{PropAlmCLInv}  to   $x_0=0$ and $\theta=2^{1/r}/L$, we obtain   $M>1$ (independent on $k$) and $k_0\in \N$ such that, if $k$ was previously chosen larger than $k_0$,
\begin{equation}\label{Eqkarb2}
\|f(g(\varphi(\bar n)))-f(g(\varphi(\bar m)))\|\geq \frac{1}{M}\|g(\varphi(\bar n))-g(\varphi(\bar m))\|\geq \frac{k^{1/r}}{M}
\end{equation}
Then, \eqref{Eqkarb1} and \eqref{Eqkarb2} imply that for all $k\ge k_0$, 
\[\frac{k^{1/r}}{M}\leq C 2^{1/r}L^2k^{1/p_{1}}\]
As $r<p_1$, this gives us a contradiction for large values of $k\in\N$. 
\end{proof}

\begin{remark}\label{RemarkForthcomingPaper} The above statement  could actually be stated in terms  of ``asymptotically coarse Lipschitz embeddings''.  However, the right definition of this type of embedding is not clear: indeed, one could naturally define such embedding as being an asymptotic coarse Lipschitz equivalence between $X$ and a \emph{subset} of $Y$. But this definition has a big fault: it is not clear that the composition of such embeddings would still be an embedding of this sort. For this reason,  we chose to focus only on equivalences in this paper. 

We point out however that the proof of Theorem \ref{TheoremNol2} shows the following stronger result: if $1\leq r< p_1< \ldots< p_n<\infty$, then there is no coarse Lipschitz map $f:\ell_r\to \ell_{p_1}\oplus \ldots \oplus \ell_{p_n}$ which also satisfies the conclusion of Proposition \ref{PropAlmCLInv}. We will study maps  between Banach spaces which are coarse Lipschitz and satisfy the conclusion of Proposition \ref{PropAlmCLInv} (as well as modifications of it) in  a forthcoming paper  (see \cite{BragaLancienAsyEmb}).
\end{remark}

\begin{remark}\label{RemarkMidPoint}
The reader familiar with the nonlinear theory of Banach spaces knows that, for the classic coarse Lipschitz equivalences, Theorem \ref{TheoremNol2} also holds if $r$ is larger than all $p_i$'s. More precisely, $\ell_r$ does not coarse Lipschitz embeds into $\ell_{p_1}\oplus\ldots\oplus\ell_{p_n}$ for all $1\leq p_1<\ldots<p_n<r$. 
The proof of this result does not use Hamming graphs but instead uses what is called the ``approximate midpoints technique'' (see \cite[Proposition 3.1]{KaltonRandrianarivony2008}). This however does not hold in our settings. Indeed, we show in our forthcoming paper  (see \cite{BragaLancienAsyEmb}) that, for $p>q$,  $\ell_p$ can be mapped into $\ell_q$ by a coarse Lipschitz maps which satisfies the conclusion of  Proposition 2.3 (spoiler: the  Mazur map does that). See Remark \ref{RemarkMidPointEquiv} below.
\end{remark}

We can now state and prove our extension of \cite[Theorem 2.2]{JohnsonLindenstraussSchechtman1996GAFA} and  \cite[Theorem 5.3]{KaltonRandrianarivony2008} to asymptotically coarse Lipschitz equivalences.

\begin{theorem} Let $2< p_1< \ldots< p_n<\infty$. If a Banach space $X$ is asymptotically coarse Lipschitz equivalent to $\bigoplus_{k=1}^n\ell_{p_k}$, then $X$ is linearly isomorphic to $\bigoplus_{k=1}^n\ell_{p_k}$.\label{ThmAsyCLStructurelp}
\end{theorem}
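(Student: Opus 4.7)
My plan is to pass to ultrapowers via Proposition~\ref{lemmaUltrapower}, use Heinrich--Mankiewicz differentiation to extract a linear embedding of $X$ into an $L_p$-sum, and then combine the classical subspace structure theory of $L_p$ (for $p>2$) with the Pe\l{}czy\'nski decomposition method.

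First, applying Proposition~\ref{lemmaUltrapower} to the asymptotic coarse Lipschitz equivalence $f\colon X\to Y:=\bigoplus_{k=1}^n\ell_{p_k}$ and its inverse $g\colon Y\to X$ yields a Lipschitz equivalence $F\colon X^{\N}/\cU \to Y^{\N}/\cU$, where $\cU$ is a non-principal ultrafilter on $\N$. I then compose with the canonical isometric inclusion $\iota\colon X\hookrightarrow X^{\N}/\cU$, $x\mapsto[(x)_n]$, to obtain the map $\Phi:=F\circ\iota\colon X\to Y^{\N}/\cU$ given by $\Phi(x)=[(f(nx)/n)_n]$. This $\Phi$ is clearly Lipschitz; the key point is that it is actually a bi-Lipschitz embedding. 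Indeed, for any fixed $x\neq x'$ in $X$, applying Proposition~\ref{PropAlmCLInv} to $(f,g)$ (with basepoint $0$ and $\theta$ small) together with the scaling $nx,nx'\to\infty$ produces, in the ultrapower limit, the lower Lipschitz estimate $\|\Phi(x)-\Phi(x')\| \geq \|x-x'\|/L$.

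Second, $Y^{\N}/\cU$ is isometric to $\bigoplus_{k=1}^n L_{p_k}(\mu_k)$ for some measures $\mu_k$, and since this space is reflexive, it has the Radon--Nikodym property. Applying the Heinrich--Mankiewicz differentiation theorem to the bi-Lipschitz embedding $\Phi$ (after reducing to a separable subspace of $X$, which suffices by the local nature of the conclusion), we obtain a point $x_0$ at which $\Phi$ is G\^ateaux differentiable. The derivative $T:=D\Phi(x_0)\colon X\to Y^{\N}/\cU$ is a bounded linear operator, and the bi-Lipschitz estimate on $\Phi$ propagates to the lower bound $\|Th\|\geq \|h\|/L$ for all $h\in X$. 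Thus $X$ linearly embeds as a closed subspace of $\bigoplus_{k=1}^n L_{p_k}(\mu_k)$.

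Finally, the hypothesis $p_k>2$ becomes decisive. Theorem~\ref{TheoremNol2} excludes $\ell_r$ (coarse Lipschitz, hence linearly) from embedding into $X$ for every $r<p_1$, so in particular $\ell_2\not\hookrightarrow X$. Combining the linear embedding $X\hookrightarrow \bigoplus L_{p_k}(\mu_k)$ with the exclusion of $\ell_2$ and of $\ell_r$ for $r<p_1$, classical structure theorems for subspaces of $L_p$ with $p>2$ (the Kadec--Pe\l{}czy\'nski dichotomy and Rosenthal's theorems) yield that $X$ is isomorphic to a complemented subspace of $\bigoplus_{k=1}^n\ell_{p_k}$. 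The symmetric argument, starting from the inverse map $g$ and repeating the ultrapower--differentiation step, produces a complemented copy of $\bigoplus_{k=1}^n\ell_{p_k}$ inside $X$. The Pe\l{}czy\'nski decomposition method then concludes $X\cong \bigoplus_{k=1}^n\ell_{p_k}$. The main technical obstacle will be verifying \emph{complementation} in the embedding $X\hookrightarrow\bigoplus L_{p_k}$; this is precisely where the assumption $p>2$ does essential work, reflecting the gap (flagged in Remark~\ref{RemarkMidPoint}) between the asymptotic and the classical coarse Lipschitz setting for $p\le 2$.
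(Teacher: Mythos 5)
Your opening moves match the paper's (ultrapowers via Proposition~\ref{lemmaUltrapower}, then Heinrich--Mankiewicz, then exclusion of $\ell_2$ via Theorem~\ref{TheoremNol2}), but the endgame has a genuine gap: the complementation you yourself flag as ``the main technical obstacle'' is not something you can recover from subspace structure theory. From a mere linear embedding $X\hookrightarrow\bigoplus_k L_{p_k}(\mu_k)$ together with $\ell_2\not\hookrightarrow X$, the Kadec--Pe\l{}czy\'nski dichotomy and the Johnson--Odell/Rosenthal results only give that $X$ embeds into $\bigoplus_k\ell_{p_k}$; they do \emph{not} give that $X$ is isomorphic to a complemented subspace of it. Indeed, for $p>2$ the space $\ell_p$ has plenty of subspaces containing no copy of $\ell_2$ which are uncomplemented and not isomorphic to $\ell_p$ (for instance subspaces failing the approximation property), so this step fails as stated. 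The symmetric half of your Pe\l{}czy\'nski decomposition scheme is also unsubstantiated: differentiating $g$ (or $G$) produces a linear map into $X^{\N}/\cU$, not into $X$, and again gives no complemented copy of $\bigoplus_k\ell_{p_k}$ inside $X$. In the paper the complementation comes from the nonlinear side: the Heinrich--Mankiewicz theorems are applied to the Lipschitz \emph{equivalence} of the ultrapowers (not just to an embedding), and they yield directly that $X$ is isomorphic to a complemented subspace of $\bigoplus_k L_{p_k}$; only then does Johnson's theorem (a complemented subspace of $L_p$, $p>2$, with no copy of $\ell_2$ is isomorphic to a complemented subspace of $\ell_p$) apply, followed by Pe\l{}czy\'nski's theorem, the Edelstein--Wojtaszczyk decomposition for sums of totally incomparable spaces, and an induction on $n$ to rule out the degenerate case where some summand comes out finite dimensional. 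Note that Johnson's theorem needs complementation in $L_p$ as a hypothesis, which is exactly what your route never supplies.

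A smaller point: your derivation of the lower Lipschitz estimate for $\Phi=F\circ\iota$ via Proposition~\ref{PropAlmCLInv} produces a constant $L$ depending on $\theta$, hence on the ratio $\|x-x'\|/\max\{\|x\|,\|x'\|\}$, so it does not give a uniform bi-Lipschitz bound by itself. This is easily repaired: since $G\circ F=\mathrm{Id}_{X^{\N}/\cU}$ (Proposition~\ref{lemmaUltrapower} and its symmetric counterpart), one gets $\|x-x'\|\leq \mathrm{Lip}(G)\,\|\Phi(x)-\Phi(x')\|$ directly. But repairing this does not close the main gap above, which is structural: without complementation obtained from the nonlinear equivalence itself, the linear theory of subspaces of $L_p$, $p>2$, cannot force $X\cong\bigoplus_k\ell_{p_k}$.
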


\begin{proof}
The proof follows by induction on $n\in\N$. Indeed, suppose $X$ is asymptotically coarse Lipschitz equivalent to $\ell_{p_1}$. Then, by Proposition \ref{lemmaUltrapower}, we have that $X^\N/{\cU}$ is Lipschitz equivalent to $Y^\N/{\cU}$, where $\cU$ is any nonprincipal ultrafilter on $\N$. In particular, it follows from classic results that  $X$ is isomorphic to a complemented subspace of $L_{p_1}$ (this follows for instance from \cite[Theorems 1.3 and 2.3, and Proposition 2.1]{HeinrichMankiewicz1982}). By Theorem \ref{TheoremNol2},   $X$ does not contain an isomorphic copy of $\ell_2$. Hence,   $X$ must be isomorphic to a complemented subspace of  $\ell_{p_1}$ (see the main result in \cite{Johnson1976JLMS}), which in turn is either isomorphic to $\ell_{p_1}$ itself or finite dimensional (\cite[Theorem 1]{Pelczynski1960Studia}). Since $X$ cannot be finite dimensional, the case $n=1$ follows. 

Let $n>1$ and suppose the result follows for any $m<n$. If $X$ is asymptotically coarse Lipschitz equivalent to $\bigoplus_{k=1}^n\ell_{p_k}$,   arguing exactly as above, we obtain that  $X$ is isomorphic to a complemented subspace of $\bigoplus_{k=1}^nL_{p_k}$. Since $X$ does not contain $\ell_2$  (Theorem \ref{TheoremNol2}), this implies that $X$ must be isomorphic to a complemented subspace of $\bigoplus_{k=1}^n\ell_{p_k}$ (this follows again from the main result of \cite{Johnson1976JLMS})). As this is the direct sum of completely incomparable Banach spaces, this implies that $X$ is isomorphic to $\bigoplus_{k=1}^nE_k$, where each $E_k$ is a complemented subspace of $\ell_{p_i}$ (\cite[Corollary 3.7]{EdelsteinWojtaszczyk1976Studia}); so, each $E_k$ is either finite dimensional or  isomorphic to $\ell_{p_k}$ itself. In order to conclude, we need to show that each $E_i$ is isomorphic to $\ell_{p_i}$. Otherwise $X$ would be isomorphic to $Y=\bigoplus_{i \in I}\ell_{p_i}$, for some  $I \subsetneq \{p_1,\ldots,p_n\}$ and thus, $\bigoplus_{k=1}^n\ell_{p_k}$ would be asymptotically coarse Lipschitz equivalent to $Y$, contradicting our inductive assumption. 
\end{proof}

The following is an immediate consequence of Corollary \ref{CorFinDimWeakNotion} and  Theorem  \ref{ThmAsyCLStructurelp}.

\begin{corollary} [Theorem \ref{ThmAsyCLStructurelpINTRO} in Section \ref{SectionIntro}]
Let $p\in [2,\infty)$. If a Banach space is asymptotically coarse Lipschitz equivalent to $\ell_p$, then $ X$ is linearly isomorphic to $\ell_p$. \label{CorAsyCLStructurelp}
\end{corollary}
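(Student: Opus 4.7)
The plan is to observe that the corollary is exactly the union of two results already established in the paper, and the only work needed is to split into two cases according to the value of $p$.

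First, I would handle the case $p = 2$. Here the conclusion is precisely the content of Corollary \ref{CorFinDimWeakNotion}(\ref{CorFinDimWeakNotion.Item2}): if $Y = \ell_2$ and $X$ is asymptotically coarse Lipschitz equivalent to $Y$, then $X$ is isomorphic to $\ell_2$. That result was in turn proved by passing to ultrapowers via Proposition \ref{lemmaUltrapower}, which converts the asymptotic coarse Lipschitz equivalence into a genuine Lipschitz equivalence between $X^\N/\cU$ and $(\ell_2)^\N/\cU$, and then invoking Enflo's theorem on the stability of being isomorphic to a Hilbert space under Lipschitz equivalences.

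Second, for $p \in (2, \infty)$, I would apply Theorem \ref{ThmAsyCLStructurelp} with $n = 1$ and $p_1 = p$. That theorem directly gives that any Banach space asymptotically coarse Lipschitz equivalent to $\bigoplus_{k=1}^{1} \ell_{p_k} = \ell_p$ is linearly isomorphic to $\ell_p$, which is exactly what is needed.

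Combining these two cases covers all $p \in [2, \infty)$ and proves the corollary. Since both ingredients are already available, there is no obstacle and no further calculation is required; the proof is essentially a one-line case analysis. The only minor stylistic point is that one may prefer to state both cases uniformly, but the cleanest presentation is to just invoke Corollary \ref{CorFinDimWeakNotion}(\ref{CorFinDimWeakNotion.Item2}) when $p = 2$ and Theorem \ref{ThmAsyCLStructurelp} when $p > 2$.
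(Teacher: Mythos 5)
Your proof is correct and matches the paper's own argument exactly: the paper derives Corollary \ref{CorAsyCLStructurelp} as an immediate consequence of Corollary \ref{CorFinDimWeakNotion} (the $p=2$ case, via ultrapowers and Enflo's theorem) and Theorem \ref{ThmAsyCLStructurelp} (the case $2<p<\infty$, taking $n=1$). Nothing further is needed.
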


\begin{remark}\label{RemarkMidPointEquiv}
We point out that we do not know if Corollary \ref{CorAsyCLStructurelp} is valid for $p$'s in the range $[1,2)$. We only know that, if true, the proof would have to use different ideas than the ones from coarse Lipschitz equivalences (see Remark \ref{RemarkMidPoint} above for more details on that).
\end{remark}

\section{The Gorelik principle and applications to the asymptotic structure}\label{asymptoticstructure}
In this section, we establish a version of the Gorelik principle for asymptotically coarse Lipschitz equivalences. Then, we apply it to  extend to asymptotically coarse Lipschitz equivalences a few results on the stability of asymptotic smoothness properties of Banach spaces under nonlinear equivalences. The first results in this direction can be found in \cite{GodefroyKaltonLancien2001Trans} where it is shown that being $p$-AUSable is stable under Lipschitz equivalences and that being $p'$-AUSable for all $p'<p$ is stable under uniform homeomorphism. However, Kalton proved that being $p$-AUSable is not stable under uniform homeomorphisms or coarse Lipschitz equivalences (\cite[Theorem 5.4 and Remark in page 170]{Kalton2013Examples}). In order to describe our results precisely, we 
 will first introduce  a few other asymptotic properties.

\subsection{Relevant asymptotic properties} \label{SubsectionAsyProp}

It will be useful to define a modulus that is dual to  the modulus of asymptotic uniform smoothness of a Banach space $X$, namely, the modulus of weak$^*$-asymptotic uniform convexity of $X^*$. Firstly, denote the set of all weak$^*$-closed subspaces of $X^*$ with finite codimension by  $\co^*(X^*)$. We can then define the  \emph{modulus of weak$^*$-asymptotic uniform convexity of $X^*$}  by letting
\[\bar\theta_X(\tau)=\inf_{x^*\in  S_{X^*}}\sup_{Y\in \co^*(X^*)}\inf_{y^*\in S_{Y}}\|x^*+\tau y^*\|-1
\]
for all $\tau\geq 0$. We say that $X^*$ is  \emph{weak$^*$-asymptotically uniformly convex}, abbreviated as \emph{w$^*$-AUC}, if 
\[\bar\theta_X(\tau)>0\ \text{ for all }\  \tau >0.\]
It is easy to notice (and we will use this in the proof of Theorem \ref{ThmTheta*Pres}) that $\bar\theta_X(\tau)/\tau$ is increasing. These moduli are related to each other in the sense that $X$ is AUS if and only if $X^*$ is $w^*$-AUC. This follows from the following precise quantitative result that we shall also need.

\begin{proposition}\emph{(}\cite[Proposition 2.1]{DilworthKutzarovaLancienRandrianarivony2017}\emph{).}
Let $X$ be a Banach space and $\tau,\sigma\in (0,1)$. 
\begin{enumerate}
\item If $\bar\rho_X(\sigma)<\sigma\tau$, then $\bar\theta_X(6\tau)\geq \sigma\tau $
\item If $\bar \theta_X(\tau)>\sigma\tau$, then $\bar\rho_X(\sigma)\leq \sigma\tau$.
\end{enumerate} \label{PropositionRelationRhoTheta}
\end{proposition}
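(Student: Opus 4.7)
The proposition is a quantitative form of the duality between $\bar\rho_X$ and $\bar\theta_X$: part (2) goes from the $w^*$-AUC modulus of $X^*$ to the AUS modulus of $X$, and part (1) goes the other way. Both follow the classical Hahn--Banach/weak$^*$-compactness template for duality between uniform convexity and uniform smoothness, so the core plan is, for each $x\in S_X$ (resp.\ $x^*\in S_{X^*}$), to exhibit the right finite-codimensional subspace of $X$ (resp.\ weak$^*$-closed finite-codimensional subspace of $X^*$) by picking a near-norming functional and then dualizing the hypothesis.

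For (2), I would fix $x\in S_X$, pick $x^*\in S_{X^*}$ with $x^*(x)\ge 1-\epsilon$, and apply the hypothesis to $x^*$ to obtain a finite-dimensional $F\subset X$ (so that $F^\perp\in \co^*(X^*)$) with $\|x^*+\tau z^*\|>1+\sigma\tau$ for every $z^*\in S_{F^\perp}$. A suitable finite-codimensional $Y\subset X$ --- namely a refinement of $\ker x^*$ that additionally annihilates a spanning set of functionals detecting $F$ --- would then witness $\bar\rho_X(\sigma)\le\sigma\tau$ at $x$: if some $y\in S_Y$ satisfied $\|x+\sigma y\|>1+\sigma\tau$, a norming $u^*\in S_{X^*}$ for $x+\sigma y$ would obey $u^*(y)>\tau$ and $u^*(x)$ close to $1$, so splitting off its $x^*$-component and renormalizing the remainder (which lies essentially in $F^\perp$ by choice of $Y$) would produce a unit $z^*\in F^\perp$ whose pairing with $x^*+\tau z^*$ contradicts the assumed strict inequality.

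Direction (1) would proceed symmetrically: fix $x^*\in S_{X^*}$, near-norm it by some $x\in S_X$, apply AUS of $X$ at $x$ to produce a finite-codimensional $Y\subset X$ with $\|x+\sigma y\|<1+\sigma\tau$ on $S_Y$, and let $F\subset X$ be a finite-dimensional subspace transverse to $Y$. For $z^*\in S_{F^\perp}$ I would test $x^*+6\tau z^*$ against a vector of the form $\alpha x+\beta y$ with $y\in S_Y$ chosen so that $z^*(y)$ is nearly maximal and with $\alpha,\beta>0$ tuned so that $\|\alpha x+\beta y\|\le 1$; the bound $\|x+\sigma y\|\le 1+\sigma\tau$ on $Y$ is exactly what makes the resulting test vector admissible after rescaling. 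The main obstacle will be extracting the explicit factor $6$: a careless execution of this scheme gives some unspecified absolute constant, and landing on exactly $6$ requires optimizing over the Hahn--Banach slack $\epsilon$, the ratio $\alpha/\beta$ in the test vector, and the final triangle inequality. The constant $6$ presumably arises as the optimized product of these three bounded multiplicative losses; its extraction is purely quantitative and requires no ideas beyond the Hahn--Banach/$w^*$-compactness framework already in play.
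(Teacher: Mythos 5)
First, note that the paper does not prove this proposition: it is quoted verbatim from [DKLR17, Proposition 2.1], so your sketch has to be measured against the standard argument there rather than against anything in this paper.

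Your plan for part (2) has a genuine gap at the step ``splitting off its $x^*$-component and renormalizing the remainder (which lies essentially in $F^\perp$ by choice of $Y$).'' Choosing a finite-codimensional subspace $Y\subset X$ constrains the vector $y$, not the Hahn--Banach functional $u^*$ norming $x+\sigma y$: no refinement of $\ker x^*$ by functionals ``detecting $F$'' forces $u^*-\lambda x^*$ to (almost) annihilate the finite-dimensional space $F$, and without that you cannot feed the remainder into the hypothesis on $S_{F^\perp}$. In the actual proof one argues by contradiction over the whole net: for every finite-codimensional $Y$ one picks a bad $y_Y\in S_Y$ and a norming functional $u_Y^*$ of $x+\sigma y_Y$, passes to a weak$^*$-convergent subnet $u_\alpha^*\to u^*$, and it is the differences $u_\alpha^*-u^*$ (weak$^*$-null, hence asymptotically small on any fixed finite-dimensional set) that can be perturbed into the finite-codimensional weak$^*$-closed subspace $Z$; moreover $Z$ must be the subspace the hypothesis attaches to the limit functional $u^*/\|u^*\|$, which is not known in advance, so one cannot start from a prechosen $x^*$ nearly norming $x$. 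There is also a quantitative trap your outline does not see: $\|u^*\|$ is only bounded below by $1-\sigma(1-\tau)$, and if one normalizes naively and uses the hypothesis only at scale $\tau$, the resulting estimate proves (2) only when $\tau>1/2$. To get the clean statement one must first upgrade the hypothesis by convexity/homogeneity to $\|w^*+z^*\|\ge 1+\sigma\|z^*\|$ for all $z^*\in Z$ with $\|z^*\|\ge\tau$, so that the excess of $\|u_\alpha^*-u^*\|$ over $\tau$ exactly offsets the deficiency $1-\|u^*\|$.

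For part (1), taking $F$ merely ``transverse to $Y$'' does not suffice. The test-vector estimate needs every $z^*\in S_{F^\perp}$ to satisfy $\sup_{y\in B_Y}z^*(y)\ge 1/2$, i.e.\ to be uniformly far from the finite-dimensional space $Y^\perp$; for an arbitrary complement $F$ of $Y$ the best lower bound is $1/\|P\|$, $P$ the projection onto $Y$ along $F$, which is not uniform. One must instead build the defining vectors of $Z=F^\perp$ from $x$ together with a finite set of almost-norming vectors for $Y^\perp$ (a compactness choice), and one also needs $x\in F$ (so $z^*(x)=0$) and $Y$ shrunk into $\ker x^*$ (so $x^*(y)=0$); otherwise the cross terms $6\tau z^*(x)$ and $\sigma x^*(y)$ can be negative enough to destroy the lower bound when testing $x^*+6\tau z^*$ against $(x+\sigma y)/\|x+\sigma y\|$. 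Once these choices are made, the constant $6$ is not the outcome of a delicate three-parameter optimization: it simply absorbs the factor $2$ coming from $z^*(y)\ge 1/2$ and the division by $\|x+\sigma y\|\le 1+\sigma\tau$, with room to spare for the $\varepsilon$-slack in the choice of $x$.
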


This can be rephrased in terms of Young's duality between $\bar\rho_X$ and $\bar\theta_X$. Recall that, given a continuous function $f:[0,1]\to \R$, its \emph{dual Young function} $f^*:[0,1]\to \R$ is defined as
\[f^*(s)=\sup \{st-f(t)\mid t\in [0,1]\}\]
for all $s\in [0,1]$. Notice that, if $f,g:[0,1]\to \R$ are so that $f(t/C)\leq g(t)$ for all $t\in [0,1]$ and some $C>0$, then $g^*(t/C)\leq f^*(t) \text{ for all }\ t\in [0,1]$. The following is then a simple consequence of Proposition \ref{PropositionRelationRhoTheta}.

\begin{proposition}\emph{(}\cite[Corollary 6.2]{DaletLancien2017NWEJM}\emph{).}
Given a Banach space $X$, we have that 
\[\bar \rho_X(s/2)\leq (\bar\theta_X)^*(s)\ \text{ and } \ (\bar\theta_X)^*(s/6)\leq \bar \rho_X(s)\]
 for all $ s\in [0,1]$.\label{PropositionPhoThetaEquiv}
\end{proposition}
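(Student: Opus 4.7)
The statement is essentially a rewriting of Proposition \ref{PropositionRelationRhoTheta} in the language of the Young transform, so my plan is to unpack the definition of $(\bar\theta_X)^*$ and translate each of the two implications in that proposition into one of the two claimed inequalities. Throughout I will use the trivial bound $\bar\rho_X(\tau)\le \tau$ coming from the triangle inequality, which ensures that the test values of the parameter I choose stay inside $[0,1]$.

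For the first inequality $\bar\rho_X(s/2)\le(\bar\theta_X)^*(s)$, the plan is to apply the contrapositive of part (2) of Proposition \ref{PropositionRelationRhoTheta}. Fix $s\in[0,1]$, let $c<\bar\rho_X(s/2)$ be arbitrary, and set $\tau:=2c/s$. Because $c<\bar\rho_X(s/2)\le s/2$, we have $\tau\in [0,1]$. Then $\bar\rho_X(s/2)>c=(s/2)\tau$, so the contrapositive of (2) gives $\bar\theta_X(\tau)\le(s/2)\tau=c$. Hence
\[
s\tau-\bar\theta_X(\tau)\ge 2c-c=c,
\]
so $(\bar\theta_X)^*(s)\ge c$. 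Letting $c\uparrow \bar\rho_X(s/2)$ finishes this half.

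For the second inequality $(\bar\theta_X)^*(s/6)\le \bar\rho_X(s)$, the plan is to establish the pointwise bound
\[
\frac{s}{6}t-\bar\theta_X(t)\le \bar\rho_X(s) \quad \text{for every } t\in[0,1],
\]
and then take the supremum. I will split on whether $\bar\rho_X(s)$ dominates $st/6$ or not. If $\bar\rho_X(s)\ge st/6$, the bound is immediate since $(s/6)t-\bar\theta_X(t)\le st/6\le\bar\rho_X(s)$. Otherwise, apply part (1) of Proposition \ref{PropositionRelationRhoTheta} with the choice $\sigma=s$ and $\tau=t/6$: from $\bar\rho_X(s)<s\cdot(t/6)=\sigma\tau$ we conclude $\bar\theta_X(t)=\bar\theta_X(6\tau)\ge \sigma\tau=st/6$, which makes the left-hand side nonpositive and hence $\le \bar\rho_X(s)$.

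There is no real obstacle here beyond bookkeeping: the only point that requires a moment of care is confirming that the auxiliary parameters $\tau=2c/s$ and $\tau=t/6$ stay in the unit interval (so that they are legitimate test values for the Young transform and for $\bar\theta_X$), and handling the strict-versus-non-strict inequalities appearing in the hypotheses of Proposition \ref{PropositionRelationRhoTheta} by a routine limiting argument, as above.
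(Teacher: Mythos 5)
Your proposal is correct and takes essentially the approach the paper intends: the paper offers no proof of Proposition \ref{PropositionPhoThetaEquiv}, quoting it from the literature as ``a simple consequence'' of Proposition \ref{PropositionRelationRhoTheta}, and your contrapositive-of-(2) argument for the first inequality and two-case pointwise bound via (1) for the second are exactly that consequence. The only points to make explicit are the standard facts $0\le\bar\theta_X(t)$ and $0\le\bar\rho_X(t)\le t$ that you use implicitly, and the endpoint values $s=0,1$ (and $t=0$), where Proposition \ref{PropositionRelationRhoTheta} is stated only for $\sigma,\tau\in(0,1)$ and the routine limiting argument you mention indeed suffices.
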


We now turn to the asymptotic isomorphic properties that we shall consider.  First, if $\cD$ is a set and $k\in\N$, we write  $\cD^{\leq k}=\bigcup_{i=1}^k\cD^i$ and $\cD^{<\omega}=\bigcup_{i=1}^\infty\cD^i$.  Given a Banach space $X$ and a  weak neighborhood basis of $0\in X$, say $\cD$, we call a family $(x_{\bar n})_{\bar n\in \cD^{\leq k}}$ (respectively $(x_{\bar n})_{\bar n\in \cD^{<\omega}}$) a \emph{weakly null tree} if for each $\bar n\in \{\emptyset\}\cup \cD^{\leq k-1}$ (respectively for each $\bar n\in \{\emptyset\}\cup \cD^{<\omega}$) the net $(x_{(\bar n,n_k)})_{n_k\in \cD}$ is weakly null; where here we consider $\cD$ as a directed set with the usual reverse inclusion order.

\begin{definition}\label{DefinitionTp}
Let $p\in (1,\infty]$ and $X$ be a Banach space. We say that $X$ has property $\textsf{{T}}_p$ if there is $c>0$ such that for all weak neighborhood basis $\cD$ of $0\in X$,  and all weakly null trees $(x_{\bar n})_{\bar n\in \cD^{<\omega}}$ in $B_X$, there is $\bar m=(m_1,\ldots,m_k,\ldots)\in \cD^{\N}$ such that 
\[\Big\|\sum_{i=1}^\infty a_i x_{(m_1,\ldots, m_i)}\Big\|\leq c\|a\|_{p}\]
for all $a=(a_i)_{i=1}^\infty \in \ell_p$. 
\end{definition}

It was proved by R.M. Causey in  that $X$ is $p$-AUSable if and only if $X$ has property $\textsf{{T}}_p$ (\cite[Theorem 1.1(i)]{CauseyPositivity2018}). We shall concentrate on two slightly weaker properties. 

\begin{definition}\label{DefinitionAp}
Let $p\in (1,\infty]$ and $X$ be a Banach space. We say that $X$ has property $\textsf{{A}}_p$ if there is $c>0$ such that for all weak neighborhood basis $\cD$ of $0\in X$, all $k\in\N$, and all weakly null trees $(x_{\bar n})_{\bar n\in \cD^{\leq k}}$ in $B_X$, there is $\bar m=(m_1,\ldots,m_k)\in \cD^{ k}$ such that 
\[\Big\|\sum_{i=1}^ka_i x_{(m_1,\ldots, m_i)}\Big\|\leq c\|a\|_{p}\]
for all $a=(a_1,\ldots, a_k)\in \ell_p^k$. 
\end{definition}

\begin{definition}\label{DefinitionAp}
Let $p\in (1,\infty]$ and $X$ be a Banach space. We say that $X$ has property $\textsf{{N}}_p$ if there is $c>0$ such that for all weak neighborhood basis $\cD$ of $0\in X$, all $k\in\N$, and all weakly null trees $(x_{\bar n})_{\bar n\in \cD^{\leq k}}$ in $B_X$, there is $\bar m=(m_1,\ldots,m_k)\in \cD^{ k}$ such that 
\[\Big\|\sum_{i=1}^kx_{(m_1,\ldots, m_i)}\Big\|\leq ck^{1/p}\]
(if $p=\infty$, we use the convention that $1/\infty=0$).
\end{definition}

The next theorem gathers the relations between the classes $\textsf{T}_p$, $\textsf{A}_p$, and $\textsf{N}_p$.

\begin{theorem}\emph{(}\cite[Theorem 1.1]{Causey2018ThreeAndHalf}\emph{).} Let $p\in (1,\infty)$. Then
$$\textsf{\emph{T}}_p \subsetneq\textsf{\emph{A}}_p \subsetneq \textsf{\emph{N}}_p \subsetneq \bigcap_{p'<p}\textsf{\emph{T}}_{p'}\ \ \text{and}\ \ \textsf{\emph{T}}_\infty \subsetneq\textsf{\emph{A}}_\infty = \textsf{\emph{N}}_\infty \subsetneq \bigcap_{p'<\infty}\textsf{\emph{T}}_{p'}$$\label{inclusions} 
\end{theorem}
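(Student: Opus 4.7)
The plan is to derive the chain of inclusions by moving from the simplest definitional containments to the more subtle analytical inclusion $\textsf{N}_p \subseteq \bigcap_{p'<p}\textsf{T}_{p'}$, then address the equality $\textsf{A}_\infty = \textsf{N}_\infty$ separately, and finally indicate how the strict separations are witnessed.

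First I would prove the direct inclusions. For $\textsf{T}_p \subseteq \textsf{A}_p$, given a weakly null tree of height $k$ in $B_X$, extend it to an infinite weakly null tree by appending $0$ at all higher levels (the zero vector lies in every weak neighborhood of $0$, so weak nullness is preserved). Any branch supplied by $\textsf{T}_p$ restricts to the required branch of length $k$ since we may plug coefficients $(a_1,\ldots,a_k,0,0,\ldots) \in \ell_p$ into the $\textsf{T}_p$ estimate. The inclusion $\textsf{A}_p \subseteq \textsf{N}_p$ is obtained by specializing the $\textsf{A}_p$ inequality to $a = (1,\ldots,1)$, for which $\|a\|_p = k^{1/p}$; the case $p = \infty$ works similarly using the convention $1/\infty = 0$.

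Next I would address the reverse $\textsf{N}_\infty \subseteq \textsf{A}_\infty$. Given coefficients $(a_i)_{i=1}^k$ with $\max_i |a_i| \leq 1$, I would approximate each $a_i$ by a signed dyadic rational, writing $a \approx \sum_{j \geq 0} 2^{-j} \eps_j$ with $\eps_j \in \{-1,0,1\}^k$. For each $j$, the set $E_j = \{i : \eps_{j,i} \neq 0\}$ indexes a weakly null subtree, and applying $\textsf{N}_\infty$ (after relabelling $E_j$ to $\{1,\ldots,|E_j|\}$) bounds $\bigl\|\sum_{i \in E_j} \eps_{j,i} x_{(m_1,\ldots,m_i)}\bigr\|$ by a constant independent of $|E_j|$. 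Summing the geometric series in $j$ yields the $\textsf{A}_\infty$ bound. A branch-stabilization argument is needed so that the same branch works simultaneously across all $j$, accomplished by iterated extraction relative to the basis $\cD$.

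The analytically hardest inclusion is $\textsf{N}_p \subseteq \textsf{T}_{p'}$ for $p' < p < \infty$. Fix $a \in \ell_{p'}$ with $\|a\|_{p'} = 1$, and decompose the index set into dyadic blocks $B_j = \{i : 2^{-j-1} < |a_i| \leq 2^{-j}\}$. Along a branch where $\textsf{N}_p$ applies uniformly to each subtree indexed by $B_j$, I expect $\bigl\|\sum_{i \in B_j} x_{(m_1,\ldots,m_i)}\bigr\|\leq c\,|B_j|^{1/p}$, so the triangle inequality yields $\bigl\|\sum_i a_i x_{(m_1,\ldots,m_i)}\bigr\| \leq c \sum_j 2^{-j} |B_j|^{1/p}$. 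A Hölder estimate on the factors $2^{-j} |B_j|^{1/p}$ exploiting the exponent gap $1/p' - 1/p > 0$ then controls this sum by a constant multiple of $\|a\|_{p'}$. The main obstacle is the uniform branch selection: one needs a diagonal/fusion extraction so that a single infinite branch validates $\textsf{N}_p$ simultaneously for the countably many dyadic blocks, performed by exhausting the directed set $\cD$ level by level. The strict separations $\textsf{T}_p \subsetneq \textsf{A}_p \subsetneq \textsf{N}_p \subsetneq \bigcap_{p'<p}\textsf{T}_{p'}$ are not forced by these quantitative comparisons and must be exhibited by explicit counterexamples of Tsirelson-type with carefully calibrated asymptotic moduli; I do not see a purely abstract route to the strictness, so finding such examples is the decisive difficulty.
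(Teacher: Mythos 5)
This statement is not proved in the paper at all: it is quoted verbatim from Causey \cite[Theorem 1.1]{Causey2018ThreeAndHalf}, so there is no internal argument to compare yours with, and your sketch has to stand on its own. Its easy part does: extending a height-$k$ tree by zero vectors to get $\textsf{T}_p\subseteq\textsf{A}_p$, and plugging $a=(1,\dots,1)$ to get $\textsf{A}_p\subseteq\textsf{N}_p$, are correct. But everything beyond that has genuine gaps, and those are precisely the substantive content of the theorem.

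Concretely: (a) in the definitions of $\textsf{A}_p$ and $\textsf{T}_p$ a \emph{single} branch $\bar m$ must satisfy the estimate for \emph{all} coefficient sequences $a$ simultaneously, whereas in both your $\textsf{N}_\infty\subseteq\textsf{A}_\infty$ and $\textsf{N}_p\subseteq\textsf{T}_{p'}$ arguments you fix $a$ (and its dyadic blocks) first and then choose branches adapted to it; this inversion of quantifiers is exactly what makes the implications nontrivial. (b) $\textsf{N}_p$ only controls the unsigned sum with all coefficients equal to $1$ along some branch; there is no unconditionality available, so the bounds you claim for signed block sums $\sum_{i\in E_j}\eps_{j,i}x_{(m_1,\dots,m_i)}$ with $\eps_{j,i}\in\{-1,0,1\}$ do not follow from $\textsf{N}_\infty$ or $\textsf{N}_p$, and the "relabelled" block families are not literally weakly null trees of the required form without an additional construction. (c) $\textsf{T}_{p'}$ demands an infinite branch while $\textsf{N}_p$ is a purely finite-height property; the proposed "diagonal/fusion extraction" is not available in general ($\cD$ need not be countable, and good branches for different heights or blocks need not cohere), and the strictness $\textsf{T}_p\subsetneq\textsf{A}_p$ asserted in the very statement shows that finite-height control does not formally upgrade to infinite-height control. (d) The equality $\textsf{A}_\infty=\textsf{N}_\infty$ and all four strict inclusions are left unestablished; in Causey's work these are obtained through renorming/Szlenk-type characterizations of the classes (of the kind the paper later quotes as Theorems \ref{TheoremCauseyNp} and \ref{TheoremCFLAp}) together with purpose-built examples, not through elementary tree manipulations. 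As it stands, your proposal proves only $\textsf{T}_p\subseteq\textsf{A}_p\subseteq\textsf{N}_p$; for the rest the correct move in this paper's context is simply to invoke \cite{Causey2018ThreeAndHalf}.
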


We can now state our most precise result, which is the main result of this section and will imply Theorem \ref{ThmAUSPreservationINTRO}. 

\begin{theorem}\label{precise stability} Let $p\in (1,\infty]$. Then properties $\textsf{\emph{A}}_p$ and $\textsf{\emph{N}}_p$ are stable under asymptotically coarse Lipschitz equivalences. 
\end{theorem}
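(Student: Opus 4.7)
The plan is to first establish an asymptotic version of the Gorelik principle, which lies at the heart of the argument. In its classical form (for uniform or coarse Lipschitz equivalences), the Gorelik principle says that if $g:Y\to X$ is an almost inverse of $f:X\to Y$, then for every finite codimensional subspace $E\subset X$ and every $d>0$, the image $g^{-1}$ of the ball $dB_X\cap E$ is essentially a ball in $Y$ up to a compact perturbation. In our weaker setting we only know $\|g\circ f(x)-x\|=o(\|x\|)$ and $\|f\circ g(y)-y\|=o(\|y\|)$, so the resulting statement will be \emph{scale-dependent}: working at a scale $r$ large enough, the errors become negligible compared to $r$, and we recover the usual conclusion with constants that deteriorate only by a controlled amount. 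Concretely, we will show that for every $E\in\co(X)$, every $d>0$ and every $\varepsilon>0$, there exists $r_0$ such that for $r\ge r_0$ the set $g(f(rE\cap dB_X))$ contains a net of $rE\cap (d-\varepsilon)B_X$, modulo a compact set of controlled diameter. This is the analogue of \cite[Lemma]{GodefroyKaltonLancien2001Trans}-style Gorelik estimates in our setting, and the proof follows the usual recipe (Brouwer-type fixed point arguments on a compact subset of $E$) but with all thresholds chosen after the asymptotic closeness error is absorbed.

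With the asymptotic Gorelik principle in hand, the proof of Theorem \ref{precise stability} proceeds by a transfer argument. Assume $X$ has property $\textsf{A}_p$ (resp.\ $\textsf{N}_p$) and that $Y$ is asymptotically coarse Lipschitz equivalent to $X$ via $f:X\to Y$ and $g:Y\to X$. Suppose, towards a contradiction, that $Y$ fails the corresponding property. Then for arbitrarily large $c$ there exist $k\in\N$ (arbitrarily large, in the $\textsf{A}_p$ case) and weakly null trees $(y_{\bar n})_{\bar n\in\cD^{\le k}}\subset B_Y$ whose every branch satisfies the "bad" lower bound. We rescale by a large parameter $t$ (to be chosen) and consider the tree $(ty_{\bar n})$ in $tB_Y$, which we transfer to $X$ by applying $g$.

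The subtle step is to make the transferred tree weakly null. At each node $\bar n$, the net $(y_{(\bar n,n_{k+1})})_{n_{k+1}\in\cD}$ is weakly null in $Y$, so one would like to pick the lifted point $x_{(\bar n,n_{k+1})}$ to lie in a preassigned finite codimensional subspace $E_{\bar n}\subset X$. This is where the Gorelik principle enters: applied at scale $t$, it guarantees that after a small perturbation (of size $o(t)$ in norm) one can choose $x_{(\bar n,n_{k+1})}\in E_{\bar n}$ with $\|f(x_{(\bar n,n_{k+1})}) - ty_{(\bar n,n_{k+1})}\|$ small compared to $t$. Using the coarse Lipschitz bound on $g$ together with $f\circ g\sim_\infty \mathrm{Id}_Y$ (invoked via Proposition~\ref{PropAlmCLInv}), the resulting tree $(x_{\bar n}/t)\subset X$ is weakly null, lies in a bounded multiple of $B_X$, and its branch sums are (up to multiplicative constants depending on $f,g$) bounded below by the corresponding sums in $Y$. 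Applying $\textsf{A}_p$ (resp.\ $\textsf{N}_p$) for $X$ contradicts the "bad" lower bound once $c$ is taken large enough relative to the Lipschitz constants.

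The main obstacle is calibrating the scale $t$ against the depth $k$ of the tree and the weak neighborhoods in $\cD$. Because the asymptotic closeness only yields $o(\|x\|)$ errors, one must take $t$ large enough that the aggregate error along all $k$ levels of the tree remains negligible compared to the branch norm; for $\textsf{A}_p$ with variable $k$ this requires $t=t(k)$ to grow sufficiently fast, and for $\textsf{N}_p$ the same choice must beat the $k^{1/p}$ growth. Handling this quantitative interplay—together with the combinatorial choice of good branches inside $\cD$ to accommodate the finite codimensional restrictions imposed by the Gorelik selection—will be the most technical part of the argument, and is precisely the reason why the asymptotic Gorelik principle must be formulated quantitatively rather than just qualitatively.
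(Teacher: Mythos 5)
Your first step (a scale--dependent Gorelik principle) is in line with what the paper does (Proposition \ref{PropGorelik} and Theorem \ref{ThmGorelik}), but the second step contains a genuine gap. The Gorelik principle does \emph{not} let you choose, for a prescribed finite codimensional $E_{\bar n}\subset X$, a point $x_{(\bar n,n_{k+1})}\in E_{\bar n}$ with $\|f(x_{(\bar n,n_{k+1})})-ty_{(\bar n,n_{k+1})}\|=o(t)$. What it gives is an inclusion of the form $\frac{t}{M}B_Y\subset K+\eps t B_Y+f(2tB_{X_0})$, where the compact set $K$ has diameter comparable to $t$ (in the proof it is $-f(A)$ with $A\subset tB_X$). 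This compact error term is unavoidable (it is exactly what shift-type maps force), so the lift of $ty_{(\bar n,n_{k+1})}$ is only close to a point of $f(2tB_{E_{\bar n}})$ \emph{modulo a translate of size comparable to $t$}; your lower estimates on the branch sums of the transferred tree, and hence the contradiction with $\textsf{A}_p$ or $\textsf{N}_p$ in $X$, collapse at this point. There is also a second, related problem you do not resolve: $g$ is nonlinear, so applying it to the weakly null nets $(y_{(\bar n,n_{k+1})})_{n_{k+1}\in\cD}$ gives no weak nullity whatsoever, and membership of the lifts in prescribed finite codimensional subspaces is only a proxy for it that still has to be reconciled with the net formulation in Definitions of $\textsf{A}_p$ and $\textsf{N}_p$.

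The way the paper neutralizes the compact set is to dualize rather than lift trees: one builds equivalent dual norms $|y^*|_k=\sup\{|y^*(f(x)-f(x'))|/\|x-x'\|:\ \|x-x'\|\ge 2^k\}$ on $Y^*$ and, in Lemma \ref{ClaimInTec}, chooses the functionals $z^*$ in a weak$^*$-closed finite codimensional subspace $Z$ that (by compactness of $K$) almost annihilates $K$; the Gorelik inclusion then yields a lower bound on $|y^*+z^*|_k$ in terms of the w$^*$-AUC modulus $\bar\theta_X$. Averaging over $k$ gives the renorming transfer of Theorem \ref{ThmTheta*Pres}, Young duality (Proposition \ref{PropositionPhoThetaEquiv}) converts it into an estimate on $\bar\rho$ (Theorem \ref{ThmRhoPresEquivNorm}), and the conclusion for $\textsf{A}_p$ and $\textsf{N}_p$ is obtained by combining this with their renorming characterizations (Theorems \ref{TheoremCauseyNp} and \ref{TheoremCFLAp}), which your sketch bypasses entirely. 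If you want to salvage a direct tree-transfer argument, you would have to explain how to absorb a compact perturbation of size comparable to $t$ at every node, which is precisely the difficulty the dual/renorming route is designed to avoid.
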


\subsection{The Gorelik principle}

Before we proceed with the proof of Theorem \ref{precise stability}, we need to establish  a variant of the  
 Gorelik principle that is valid for such equivalences. This subsection takes care of this. This principle was initially manufactured for uniform homeomorphisms and named after Gorelik's pioneer work (\cite{Gorelik1994}). Here is our version for continuous asymptotically coarse Lipschitz equivalences.

\begin{proposition}\label{PropGorelik}
Let $X$ and $Y$ be Banach spaces and assume that there exist  a continuous asymptotic coarse Lipschitz equivalence $f:X\to Y$ with continuous asymptotic coarse Lipschitz inverse $g:Y\to X$. Assume also that $f(0)=0$ and $g(0)=0$. Then there exists $M\geq 1$ such that for all $\eps>0$, there exists $t_0>0$  such that for any finite codimensional subspace $X_0$ of $X$ and  any $t>t_0$, there is a compact subset  $K\subset Y$ such that
\[\frac{t}{M }B_Y \subset K+\eps tB_Y+f(2tB_{X_0}).\]
\end{proposition}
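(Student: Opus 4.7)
The plan is to adapt the classical Gorelik principle to the asymptotic coarse Lipschitz setting. The topological input is the standard Gorelik lemma: for any closed finite-codimensional subspace $X_0$ of $X$ and any $d>0$, there is a compact set $A\subset dB_X$ such that every continuous $\varphi\colon A\to X$ with $\sup_{x\in A}\|\varphi(x)-x\|\leq d/2$ satisfies $\varphi(A)\cap X_0\neq\emptyset$. Starting from a target $y\in\frac{t}{M}B_Y$, I will construct a continuous self-map $\varphi_y$ of such a compact set whose displacement is small enough to trigger Gorelik's lemma, extract a point mapping into $X_0$, and then transport the conclusion back to $Y$ via $f$.

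Let $L\geq 1$ be a common coarse Lipschitz constant for $f$ and $g$; combined with $f(0)=g(0)=0$ this also yields $\|f(x)\|\leq L\|x\|+L$ and $\|g(y)\|\leq L\|y\|+L$. The plan is to set $M:=4L^3$. Given $\varepsilon>0$, I would choose $\eta:=\min(1/8,\varepsilon/4)$, and using $g\circ f\sim_\infty \mathrm{Id}_X$ and $f\circ g\sim_\infty \mathrm{Id}_Y$, fix $R>0$ so that $\|g(f(x))-x\|\leq\eta\|x\|$ for $\|x\|\geq R$ and $\|f(g(z))-z\|\leq\eta\|z\|$ for $\|z\|\geq R$. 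Controlling the regime $\|\cdot\|<R$ by the linear growth bounds upgrades these to global estimates $\|g(f(x))-x\|\leq\eta\|x\|+C$ and $\|f(g(z))-z\|\leq\eta\|z\|+C$, for a constant $C=C(L,R)$.

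For $t>t_0$ (with $t_0$ chosen large below), set $d:=t/L^2$ and apply Gorelik's lemma to $X_0$ with parameter $d$ to obtain a compact set $A_t\subset dB_X$. For $y\in\frac{t}{M}B_Y$, define
\[\varphi_y\colon A_t\to X,\qquad \varphi_y(x):=g(f(x)+y),\]
which is continuous. Splitting $\|\varphi_y(x)-x\|\leq \|g(f(x)+y)-g(f(x))\|+\|g(f(x))-x\|\leq (L\|y\|+L)+(\eta\|x\|+C)\leq t/(4L^2)+L+\eta t/L^2+C$ and using $\eta\leq 1/8$ and $t_0$ large makes this $\leq t/(2L^2)=d/2$, so Gorelik's lemma yields $x^*\in A_t$ with $x_0:=g(f(x^*)+y)\in X_0$. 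Since $\|f(x^*)\|\leq Ld+L=t/L+L$, a direct computation gives $\|x_0\|\leq L(\|f(x^*)\|+\|y\|)+L\leq t+L^2+t/(4L^2)+L$, which is $\leq 2t$ once $t_0$ is large enough.

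To conclude, apply the second asymptotic estimate to $z:=f(x^*)+y$, whose norm is bounded by $2t+L$:
\[\|f(x_0)-(f(x^*)+y)\|=\|f(g(z))-z\|\leq\eta\|z\|+C\leq\eta(2t+L)+C\leq \varepsilon t,\]
the last bound holding for $t_0$ large because $\eta\leq\varepsilon/4$. Rearranging, $y\in -f(x^*)+f(x_0)+\varepsilon t\, B_Y$, and taking $K:=-f(A_t)$, which is compact because $A_t$ is and $f$ is continuous, yields $\frac{t}{M}B_Y\subset K+\varepsilon t B_Y+f(2tB_{X_0})$, as required. The main delicacy is balancing the two asymptotic-closeness errors simultaneously: $\eta$ must be small enough both to activate Gorelik's lemma (requiring $\eta\leq 1/8$) and to beat $\varepsilon$ in the final estimate (requiring $\eta\leq\varepsilon/4$), while the additive coarse-Lipschitz constants must be absorbed into terms linear in $t$ by choosing $t_0$ large.
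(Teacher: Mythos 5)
Your proposal is correct and follows essentially the same route as the paper: the same Gorelik lemma from Benyamini--Lindenstrauss, the same test map $\varphi_y(a)=g(f(a)+y)$, and the same compact set $K=-f(A)$. The only differences are bookkeeping — you upgrade the asymptotic closeness to global affine estimates $\eta\|\cdot\|+C$ and absorb $C$ into $t$ (where the paper splits into the cases $\|a\|\le \delta t$ and $\|a\|>\delta t$), and you run the lemma at radius $t/L^2$ getting $M=4L^3$ instead of the paper's $M=12L$ — none of which affects the validity of the argument.
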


The following lemma will be needed in the proof of Proposition \ref{PropGorelik}.

\begin{lemma}
\emph{(}\cite[Claim (i) of Theorem 10.12]{BenyaminiLindenstraussBook}\emph{).}
Let $X$ be a Banach space, $X_0\subset X$ be a subspace with finite codimension, and $t>0$. There is a compact $A\subset tB_X$ satisfying the following: if  $\varphi:A\to X$ is a continuous map such that $\|\varphi(a)-a\|\leq t/2$ for all $a\in A$, then  $\varphi(A)\cap X_0\neq \emptyset$. \label{LemmaClaim}
\end{lemma}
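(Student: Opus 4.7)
The plan is to prove the classical Gorelik lemma (cited from Benyamini--Lindenstrauss) by a Brouwer degree argument carried out in the finite-dimensional quotient $X/X_0$. Let $n := \dim(X/X_0)$ and let $Q : X \to X/X_0$ denote the quotient map, so $\|Q\| = 1$ and $Q$ is open.

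For the construction of $A$, I would fix $r \in (t/2, t)$ (say $r = 3t/4$) and build a continuous piecewise-affine section of $Q$ over the closed ball $rB_{X/X_0}$. Concretely, take a simplicial triangulation of $rB_{X/X_0}$; at each vertex $v$, use the definition of the quotient norm to choose a lift $x_v \in X$ with $Q(x_v) = v$ and $\|x_v\| \leq t$, which is possible because $\|v\| \leq r < t$. Extending by barycentric interpolation along simplices produces a continuous $\sigma : rB_{X/X_0} \to tB_X$ with $Q \circ \sigma = \mathrm{Id}$ (the latter because $Q$ is linear and the interpolation is convex). Setting $A := \sigma(rB_{X/X_0})$ gives a compact subset of $tB_X$.

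For the degree argument, suppose $\varphi : A \to X$ is continuous with $\|\varphi(a) - a\| \leq t/2$ on $A$, and define $F := Q \circ \varphi \circ \sigma : rB_{X/X_0} \to X/X_0$. Since $\|Q\| = 1$ and $Q \circ \sigma = \mathrm{Id}$, we have $\|F(y) - y\| = \|Q(\varphi(\sigma(y)) - \sigma(y))\| \leq t/2$ on $rB_{X/X_0}$. Consider the straight-line homotopy $H_s(y) := (1-s)y + sF(y)$ for $s \in [0,1]$. On the sphere $y \in rS_{X/X_0}$, $\|H_s(y)\| \geq \|y\| - s \cdot \|F(y) - y\| \geq r - t/2 > 0$, so $H_s$ avoids $0$ uniformly on $rS_{X/X_0}$. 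By homotopy invariance of the Brouwer degree, $\deg(F, rB_{X/X_0}, 0) = \deg(\mathrm{Id}, rB_{X/X_0}, 0) = 1$, so $F^{-1}(0)$ is nonempty. For any $y^* \in rB_{X/X_0}$ with $F(y^*) = 0$, the point $a := \sigma(y^*) \in A$ satisfies $Q(\varphi(a)) = 0$, i.e.\ $\varphi(a) \in X_0$, which is the desired conclusion.

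The main obstacle is the construction of $\sigma$ with range \emph{exactly} in $tB_X$ rather than in a slightly larger ball. A naive linear section built from an Auerbach basis of $X/X_0$ has norm of order $n$ and therefore fails as soon as $n \geq 2$, since it would force $A \subset ntB_X$. The nonlinear, piecewise-affine section (alternatively, one obtained via Michael's continuous selection theorem applied to the lower semicontinuous convex-valued multifunction $y \mapsto Q^{-1}(y) \cap tB_X$ on $rB_{X/X_0}$) is what gives the correct size bound; the finite-dimensionality of the quotient is essential both for this construction and for the applicability of the Brouwer degree in the second half.
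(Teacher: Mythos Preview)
The paper does not prove this lemma; it simply quotes it from \cite{BenyaminiLindenstraussBook}. Your argument is essentially the standard proof one finds there: build a continuous section $\sigma$ of the quotient map $Q:X\to X/X_0$ over a ball of radius strictly between $t/2$ and $t$, set $A$ equal to the image of $\sigma$, and run a Brouwer degree argument on $Q\circ\varphi\circ\sigma$. The degree computation you give is correct and is exactly the point of the lemma.

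One small wrinkle in your construction of $\sigma$: the ball $rB_{X/X_0}$ is generally not a polytope, so it cannot be triangulated \emph{exactly} by affine simplices, which your barycentric interpolation needs in order to be defined on all of $rB_{X/X_0}$ with $Q\circ\sigma=\mathrm{Id}$. This is easily repaired. Either triangulate instead a polytope $P$ with $rB_{X/X_0}\subset P\subset r'B_{X/X_0}$ for some $t/2<r<r'<t$ and run the degree argument on $P$ (every boundary point of $P$ still has norm exceeding $t/2$, so the homotopy avoids $0$ on $\partial P$), or use the Michael selection you already mention, or invoke the Bartle--Graves theorem to get a continuous homogeneous section with $\|\sigma(y)\|\le (1+\varepsilon)\|y\|$. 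With any of these adjustments your proof is complete, and your closing remark about why a linear section is inadequate is on point.
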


\begin{proof}[Proof of Proposition \ref{PropGorelik}]
Let $L\geq 1$ be such that $\omega_f(t)\leq Lt+L$ and $\omega_g(t)\leq Lt+L$, for all $t\in [0,\infty)$. Let us show the proposition holds for $M=12 L$. For that, fix $\eps>0$ and let $\delta=\min\{\frac{1}{12L^2},\frac{\eps}{4L^2}\}$. Since  $g\circ f\sim_\infty \mathrm{Id}_X$ and $f\circ g\sim_\infty \mathrm{Id}_Y$, there is   $t_0>0$  such that \[\|g(f(x))-x\|\leq \delta \|x\|\text{ and } \|f(g(y))-y\|\leq \delta \|y \| \]
for all $x\in X$ and $y\in Y$ with  $\|x\|\geq \delta t_0$ and $\|y\|\geq \delta t_0$. Furthermore, we assume that $\delta t_0\geq 12L^2$.

Let $t\geq t_0$ and $X_0\subset X$ be a subspace with finite codimension. By Lemma \ref{LemmaClaim},   there is a compact subset $A\subset t B_X$ such  that if $\varphi:A\to X$ is a continuous map satisfying   $\|\varphi(a)-a\|\leq \frac{1}{2}t$, for all $a\in A$, then   $\varphi(A)\cap X_0\neq \emptyset$. 

Fix $y\in \frac{t}{12L} B_Y$ and define a map $\varphi:A\to X$ by  $\varphi(a)=g(y+f(a))$, for all $a\in A$. As $g$ is continuous, so is $\varphi$. If $a\in A\cap \delta tB_X$, we have
\begin{align*}
\|\varphi(a)-a\|&\leq\|g(y+f(a))-g(f(a))\|+\|g(f(a))\|+\|a\|\\
&\leq L\|y\|+L+L(L\delta t+L)+L +\delta t\\
& \leq \frac{t}{2}.
\end{align*}
On the other hand, if $a\in A\setminus \delta t B_Y$, as $t\geq t_0$, our choice of $t_0$ gives us that     
\begin{align*}
\|\varphi(a)-a\|&\leq\|g(y+f(a))-g(f(a))\|+\|g(f(a))-a\|\\
&\leq L\|y\|+L+\delta \|a\|\\
&\leq \frac{t}{2}.
\end{align*} 
So, $\|\varphi(a)-a\|\leq \frac{t}{2}$, for all $a\in A$. Hence, by our choice of $A$, there exists $a_y\in A$ such that $\varphi(a_y)\in X_0$. Since, $\|a_y\|\leq t$ and $\|\varphi(a_y)-a_y\|\leq \frac{t}{2}$, we have that $\varphi(a_y)\in 2tB_{X_0}$.

Now let us notice that 
\[\|f(g(y+f(a_y)))-(y+f(a_y))\|\leq \eps t.\]
Indeed, if $y+f(a_y)\in \delta t B_Y$, then, since $f(0)=g(0)=0$, we have that
\begin{align*}
\|f(g(y+f(a_y)))-(y+f(a_y))\|&	\leq \|f(g(y+f(a_y)))\|+\|y+f(a_y)\|\\
&\leq L(L\delta t+L)+L+ \delta t\\
&\leq \eps t.
\end{align*}
On the other hand, if $y+f(a_y)\not\in \delta t B_Y$, our choice of $t_0$ and the fact that $f(0)=0$ gives that  
\begin{align*}
\|f(g(y+f(a_y)))-(y+f(a_y))\|&\leq \delta \|y+f(a_y)\|\\
&\leq \delta\|y\|+ \delta (Lt+L)\\
&\leq \eps t.
\end{align*}
Therefore, we conclude that 
\[y\in K+\eps t B_Y+f(2tB_{X_0}),\]
where $K=-f(A)$.  As $f$ is continuous, $K$ is compact.
\end{proof}

We now explain how to drop the continuity assumptions in Proposition \ref{PropGorelik} in order to get the following version of the Gorelik principle for asymptotically coarse Lipschitz equivalences.

\begin{theorem}[Gorelik principle for asymptotic coarse Lipschitz equivalences]
Let $X$ and $Y$ be asymptotically coarse Lipschitz equivalent Banach spaces. Then there exist an asymptotic coarse Lipschitz equivalence   $f: X\to Y$  satisfying the following:  there exists $M\geq 1$ such that for all $\eps>0$, there is   $t_0>0$ such that for any   finite codimensional subspace $X_0$ of $X$ and any   $t>t_0$, there is a compact subset  $K\subset Y$ such that
\[\frac{t}{M }B_Y \subset K+\eps tB_Y+f(2tB_{X_0}).\]\label{ThmGorelik}
\end{theorem}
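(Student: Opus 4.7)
The plan is to reduce Theorem \ref{ThmGorelik} to Proposition \ref{PropGorelik} by replacing an arbitrary asymptotic coarse Lipschitz equivalence $f_0:X\to Y$ (with inverse $g_0:Y\to X$) by a continuous one that vanishes at the origin and admits a continuous asymptotic coarse Lipschitz inverse also vanishing at the origin. The strategy is the standard partition-of-unity smoothing, followed by a translation to normalize the values at $0$.

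For the smoothing step, I would use that every metric space is paracompact, so there exists a locally finite partition of unity $(\phi_\alpha)_\alpha$ on $X$ subordinate to the open cover $\{B(x,1):x\in X\}$. Choosing $z_\alpha\in \mathrm{supp}(\phi_\alpha)$, set
\[f_1(x)=\sum_\alpha \phi_\alpha(x)f_0(z_\alpha).\]
The sum is locally finite, so $f_1$ is continuous. Whenever $\phi_\alpha(x)\neq 0$ one has $\|x-z_\alpha\|\leq 2$, whence the convexity estimate
\[\|f_1(x)-f_0(x)\|\leq \sum_\alpha \phi_\alpha(x)\|f_0(z_\alpha)-f_0(x)\|\leq \omega_{f_0}(2)<\infty.\]
Hence $f_1$ is uniformly close to $f_0$, and the triangle inequality
\[\|f_1(x)-f_1(x')\|\leq 2\omega_{f_0}(2)+\omega_{f_0}(\|x-x'\|)\]
shows that $f_1$ remains coarse Lipschitz. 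Construct $g_1$ from $g_0$ in the same manner. Finally define $f(x)=f_1(x)-f_1(0)$ and $g(y)=g_1(y)-g_1(0)$; these are continuous coarse Lipschitz maps with $f(0)=g(0)=0$.

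It remains to check that $(f,g)$ is still an asymptotic coarse Lipschitz equivalence. Both $f$ and $f_0$ (respectively $g$ and $g_0$) differ by a uniformly bounded vector. Since $g_1$ is coarse Lipschitz, composition preserves uniform closeness up to an additive constant; a short triangle inequality chain then shows that $g\circ f$ differs from $g_0\circ f_0$ by a uniformly bounded vector. As uniform closeness is strictly stronger than asymptotic closeness, and $g_0\circ f_0\sim_\infty \mathrm{Id}_X$ by hypothesis, it follows that $g\circ f\sim_\infty \mathrm{Id}_X$, and symmetrically $f\circ g\sim_\infty \mathrm{Id}_Y$. Applying Proposition \ref{PropGorelik} to the continuous pair $(f,g)$ yields the conclusion with the same constant $M\geq 1$. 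The only mildly delicate point is the verification that the smoothed map $f_1$ remains coarse Lipschitz, which is settled by the estimate above; everything else is routine bookkeeping based on the elementary fact that bounded perturbations of inputs to coarse Lipschitz maps produce bounded perturbations of outputs.
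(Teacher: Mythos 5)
Your proof is correct and follows essentially the same route as the paper: replace the given equivalence by continuous maps uniformly close to the originals, translate so that they vanish at the origin, note that uniform closeness preserves the asymptotic coarse Lipschitz equivalence, and then apply Proposition \ref{PropGorelik}. The only difference is that you construct the continuous approximations explicitly via a partition of unity subordinate to the cover by unit balls, whereas the paper simply cites \cite[Theorem 1.4]{Braga2017JFA} for this step; your version is self-contained but not a substantively different approach.
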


\begin{proof}
Let $f:X\to Y$ and $g:Y\to X$ witness that $X$ and $Y$ are asymptotically coarse Lipschitz equivalent.  Without loss of generality, we can assume that $f(0)=0 $ and $g(0)=0$. By  \cite[Theorem 1.4]{Braga2017JFA}, there exist continuous maps $\tilde{f}:X\to Y$ and $\tilde{g}:Y\to X$ such that \[\sup_{x\in X}\|f(x)-\tilde{f}(x)\|<\infty\ \text{  and }\ \sup_{x\in X}\|g(x)-\tilde{g}(x)\|<\infty.\]  Without loss of generality, we assume that  $\tilde{f}(0)=0$ and $\tilde{g}(0)=0$. Since $f$ is close to $\tilde f$ and $g$ is close to $ \tilde g$, it immediately follows that $\tilde f$ is also  an asymptotic coarse Lipschitz equivalence   with asymptotic coarse Lipschitz  inverse $\tilde g$. The result now follows from Proposition \ref{PropGorelik} applied to $\tilde{f}$ and $\tilde{g}$.
\end{proof}

We finish this subsection with a remark about Theorem \ref{ThmGorelik} above. For this, recall: if $L\geq 1$, $Y$ is a vector space, and $\|\cdot\|$ and  $|\cdot|$ are norms on $Y$ such that
\[\frac{1}{L}\|y\|\leq |y|\leq L \|y\|\ \text{ for all }\ y\in Y,\]
we say that $\|\cdot\|$ and $|\cdot|$ are \emph{$L$-equivalent} and write $\|\cdot\|\sim_L|\cdot|$.

\begin{remark}\label{RemarkConstantGorelik}
Notice that the constant $M$ obtained in the proof of Proposition \ref{PropGorelik} equals $12L$, where $L$ is simply a number such that $\omega_f(t)\leq Lt+L$ and $\omega_g(t)\leq Lt+L$ for all $t\in [0,\infty)$. This implies in particular that if $L'\geq 1$ and  $|\cdot|$ is a norm on $X$ which is  $L'$-equivalent to the original norm of $X$, then Theorem \ref{ThmGorelik}  holds for the Banach spaces $Y$ and $(X,|\cdot|)$ with  $M=12LL'$. This will be useful below.
\end{remark}

\subsection{The technical renorming result} We present in this subsection the key renorming result for Banach spaces that are asymptotically coarse Lipschitz equivalent to an AUS Banach space. Its proof follows ideas in \cite[Theorem 5.3]{GodefroyKaltonLancien2001Trans}  (see also \cite[Theorem 3.12]{GodefroyLancienZizler2014Rocky} and \cite{DaletLancien2017NWEJM}). 

\begin{theorem}\label{ThmTheta*Pres}
Let $X$ be an AUS Banach space and let  $Y$ be a Banach space   asymptotically coarse Lipschitz equivalent to $X$. There are $L,C\geq 1$ such that  for all $\delta \in (0,1)$ there is a norm $|\cdot|$ on $Y$   such that $|\cdot|\sim_L\|\cdot\|_Y$ and 
\[\bar\theta_{(Y,|\cdot|)}(\tau)\geq \bar\theta_X(\tau/C)-\delta\ \text{ for all }\ \tau\in (0,1).\]
\end{theorem}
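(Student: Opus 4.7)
The plan is to adapt the renorming strategy of \cite[Theorem 5.3]{GodefroyKaltonLancien2001Trans} (see also \cite{DaletLancien2017NWEJM}) to our setting, using the asymptotic Gorelik principle of Theorem \ref{ThmGorelik} in place of the classical one. Fix the continuous asymptotic coarse Lipschitz equivalence $f:X\to Y$ with inverse $g:Y\to X$ provided by Theorem \ref{ThmGorelik}, normalised so that $f(0)=g(0)=0$, and let $L_0\geq 1$ satisfy $\omega_f(t),\omega_g(t)\leq L_0 t+L_0$, so the Gorelik constant is $M=12L_0$. Given $\delta\in(0,1)$, fix an auxiliary $\eta=\eta(\delta)>0$, small compared to $\delta$ and $L_0$, to be determined at the end of the argument.

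The new norm $|\cdot|$ on $Y$ will be obtained via its dual. For $y^*\in Y^*$, the pullback $y^*\circ f:X\to\R$ is coarse Lipschitz with slope $\leq L_0\|y^*\|_{Y^*}$, so
\[N(y^*) := \limsup_{\|x\|_X\to\infty}\frac{|y^*(f(x))|}{\|x\|_X}\]
defines a seminorm on $Y^*$ bounded by $L_0\|\cdot\|_{Y^*}$. Setting $|y^*|_* := \max\{\|y^*\|_{Y^*},\, M N(y^*)\}$ and letting $|\cdot|$ be the corresponding norm on $Y$ (its predual, with standard adjustments to ensure weak$^*$-lower-semicontinuity), we clearly have $|y|\leq \|y\|_Y$. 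For the reverse, I test any $y^*$ with $|y^*|_*\leq 1$ against $(t/M)B_Y$ via the Gorelik covering $(t/M)B_Y\subset K+\eta tB_Y+f(2tB_X)$: the compact term contributes $o(t)$, the second term at most $\eta t\|y^*\|_{Y^*}$, and $f(2tB_X)$ is controlled by $2N(y^*)t+o(t)$; letting $t\to\infty$ yields $\|y^*\|_{Y^*}\leq L|y^*|_*$ for a universal $L=L(L_0)$, which gives the desired $L$-equivalence.

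The modulus bound reduces by duality (Propositions \ref{PropositionRelationRhoTheta} and \ref{PropositionPhoThetaEquiv}) to a matching upper bound on $\bar\rho_{(Y,|\cdot|)}$ in terms of $\bar\rho_X$. Fix $y\in S_{(Y,|\cdot|)}$ and a finite codimensional $Y_0\in\co(Y)$. By definition of $|\cdot|$, one finds $(x_n)\subset X$ with $\|x_n\|_X\to\infty$ such that $f(x_n)/\|x_n\|_X$ approximates $y$ in $|\cdot|$. Apply Theorem \ref{ThmGorelik} with $X_0\in\co(X)$ chosen so that $f(X_0)$ is asymptotically contained in $Y_0$; here Proposition \ref{PropAlmCLInv} ensures that the preimage under $f$ of a finite codimensional subspace of $Y$ asymptotically contains a finite codimensional subspace of $X$, and the compact correction $K$ is absorbed into a further refinement of $Y_0$. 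The AUS inequality for $X$ applied at $x_n$ in direction $X_0$, pushed forward via $f$ and renormalised using the coarse Lipschitz constant $L_0$, yields
\[\sup_{z\in S_{Y_0}}|y+\sigma z|\;\leq\; 1+L_0\,\bar\rho_X(\sigma/L_0)+\eta\]
for all sufficiently small $\sigma>0$. Taking the infimum over $Y_0\in\co(Y)$, the supremum over $y$, dualising via Proposition \ref{PropositionRelationRhoTheta}, and choosing $\eta<\delta/2$ yields the stated inequality after a final adjustment of the constant $C$.

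The main obstacle is controlling the compact correction $K$ in the Gorelik covering simultaneously with the finite codimensional quantifier appearing in $\bar\rho$ and $\bar\theta$: the elements of $K$ must be absorbed by further refining the finite codimensional subspace, forcing the argument to be carried through a net indexed by $\co(Y)$ rather than a sequential construction, with careful tracking of the dependence of $K$ on $t$ and $X_0$.
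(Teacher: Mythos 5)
Your overall strategy (Gorelik principle plus a Godefroy--Kalton--Lancien style dual renorming) is the right one, but the execution has two gaps that I think are fatal as written. First, the step ``by definition of $|\cdot|$, one finds $(x_n)\subset X$ with $\|x_n\|_X\to\infty$ such that $f(x_n)/\|x_n\|_X$ approximates $y$ in $|\cdot|$'' is false: the unit ball of the predual of $\max\{\|\cdot\|_{Y^*},MN(\cdot)\}$ is a closed \emph{convex hull} involving normalized image points and a multiple of $B_Y$, so a generic $|\cdot|$-unit vector is a convex combination and need not be close to any single $f(x)/\|x\|_X$. This is precisely why the paper never tries to estimate $\bar\rho$ directly on $Y$: it works on the dual side with $\bar\theta$, where the norms are defined as suprema of difference quotients $|y^*(f(x)-f(x'))|/\|x-x'\|$ over $\|x-x'\|\ge 2^k$, so that a nearly norming pair $(x,x')$ always exists and can legitimately be fed into the Gorelik principle. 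Relatedly, your claim that Proposition \ref{PropAlmCLInv} lets you choose $X_0\in\co(X)$ with $f(X_0)$ asymptotically inside a prescribed $Y_0\in\co(Y)$ is unsupported: that proposition is only a lower expansion bound for separated pairs, and nothing in the Gorelik principle relates $f(X_0)$ to a \emph{given} finite codimensional subspace of $Y$. In the paper the quantifiers go the other way: $X_0$ is chosen from the AUS modulus of $X$ at the near-optimal point $x$, Gorelik produces a covering of a ball of $Y$ by $K+\eps tB_Y+f(\sigma\|x\|B_{X_0})$, and only \emph{afterwards} is the finite codimensional $Z\in\co^*(Y^*)$ chosen, using compactness of $K$ and the single vector $f(x)$ (namely $z^*(f(x))=0$ and $|z^*|$ small on $K$). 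Your worry about ``absorbing $K$ into a refinement of $Y_0$'' is resolved there, not by refining a predual subspace.

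Second, the single limsup-type norm $N(y^*)=\limsup_{\|x\|\to\infty}|y^*(f(x))|/\|x\|_X$ cannot replace the paper's family $|\cdot|_k$. Besides the weak$^*$ lower semicontinuity problem (a limsup is an infimum of suprema of $w^*$-continuous functions and need not be $w^*$-lsc, whereas each fixed-scale supremum is), a lower bound on a limsup would have to be certified along a sequence of scales tending to infinity, each requiring its own Gorelik covering with its own compact set, and no single finite codimensional $Z$ can approximately annihilate all of them. The paper's mechanism is different and essential: for each dyadic scale one proves $|y^*+z^*|_k\ge 2|y^*|_{k+1}-|y^*|_k+\bar\theta_X(\tau/C)$, and the mismatch between scales is only removed by averaging $|y^*|=\frac1N\sum_{k=k_0+1}^{k_0+N}|y^*|_k$ and telescoping, which is exactly where the $-\delta$ in the statement comes from (via $\frac2N|y^*|_{k_0+1}<\delta$). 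Your construction has no mechanism that produces this $\delta$ loss, and the translation trick ($x=-x'$, $f(x)=-f(x')$) used to align the Gorelik covering with the optimizing pair is unavailable for your one-point quotient $|y^*(f(x))|/\|x\|$. So the proposal needs to be restructured along the dual, multi-scale, averaged construction to go through.
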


\begin{proof} 
 Let $f:X\to Y$ be the asymptotic coarse Lipschitz  equivalence given by Theorem  \ref{ThmGorelik},  $g:Y\to X$ be an asymptotic coarse Lipschitz inverse of $f$, and $M>0$ be given by Theorem  \ref{ThmGorelik} for $f$. Moreover, let $t_0>0$ be given by Theorem  \ref{ThmGorelik} for $\eps=1/(8M)$, i.e.,   for all   finite codimensional subspaces $X_0\subset X$ and all  $t>t_0$, there is a compact subset  $K\subset Y$ such that
\[\frac{t}{M }B_Y \subset K+\frac{t}{8M} B_Y+f(2tB_{X_0}).\]
As $f$ and $g$ are coarse Lipschitz, we can pick  $L>0$ large enough so that 
\begin{itemize}
\item $\|f(x)-f(x')\|\leq L\max \{\|x-x'\|,1\}$ for all $x,x'\in X$, and 
\item $\|g(y)-g(y')\|\leq L \max \{\|y-y'\|,1\}$ for all $y,y'\in Y$. 
\end{itemize}

For each $k\in\N$, we define an equivalent norm $|\cdot|_k$ on $Y^*$ by letting 
\[|y^*|_k=\sup\left\{\frac{|y^*(f(x)-f(x'))|}{\|x-x'\|}\mid x,x'\in X \text{ with }\|x-x'\|\geq 2^k\right\}\]
Clearly, $|y^*|_{k+1}\leq |y^*|_k$ for all $y^*\in Y^*$ and all $k\in\N$. The next claim shows that this is indeed an equivalent norm on $Y^*$.

\begin{lemma}\label{ClaimEquivNorm}
For all $k\in\N$ and all $y^*\in Y^*$,   we have that \[\frac{1}{L}\|y^*\|\leq |y^*|_k\leq L\|y^*\|.\]
\end{lemma}

\begin{proof}
Fix $k\in\N$ and $y^*\in Y^*$. The inequality $ |y^*|_k\leq L\|y^*\|$ is immediate, so, we only prove the lower bound for $|y^*|_k$.
Fix a sequence $(y_n)_n$ in $S_Y$ such that $\|y^*\|=\lim_n|y^*(y_n)|$. By our choice of $L$, we have that 
\begin{equation}\label{Eq1}
\|g(ny_n)-g(0)\|\leq L\|ny_n\|
\end{equation}
for all $n\in\N$ large enough. It follows that for all $n\in \N$,
\begin{align*}
&\frac{|y^*(f(g(ny_n))-f(g(0)))|}{\|g(ny_n)-g(0)\|}\\
&\geq \frac{1}{L}\frac{|y^*(f(g(ny_n))-f(g(0)))|}{\|ny_n\|}\\
&\geq  \frac{1}{L}\left(\frac{|y^*(ny_n)|}{\|ny_n\|}- \frac{|y^*(f(g(ny_n))-ny_n)|}{\|ny_n\|}-\frac{|y^*(f(g(0)))|}{\|ny_n\|}\right).
\end{align*}
As $\lim_{n\to \infty}\|ny_n\|=\infty$ and, as $f$ and $g$ are asymptotic coarse Lipschitz inverses of each other,  we have   
\[\lim_{n\to\infty} \frac{\|f(g(ny_n))-ny_n\|}{\|ny_n\|}=0.\]
Therefore,  we conclude that  
\[\lim_{n\to \infty}\frac{|y^*(f(g(ny_n))-f(g(0)))|}{\|g(ny_n)-g(0)\|}\ge \frac{1}{L}\|y^*\|\]
and the lemma follows.
\end{proof}

Note that $|\cdot|_k$ is clearly weak$^*$ lower semi-continuous and is the dual norm of an equivalent norm  on $Y$ whose closed unit ball is the closed convex hull of 
$$\left\{\frac{f(x)-f(x')}{\|x-x'\|}\mid x,x'\in X \text{ with }\|x-x'\|\geq 2^k\right\}.$$

\begin{lemma}\label{ClaimInTec}
Let $\delta \in (0,1)$. There is $C>0$ such that for all $k\in\N$ with $2^kC\delta^{-1}\bar\theta_X(\delta/C)>2t_0$, the following holds: for all  $\tau\in (\delta,1)$ and all $y^*\in LB_Y$, there is  $Z\in \co^*(Y^*)$  such that 
\[|y^*+z^*|_k\geq 2|y^*|_{k+1}-|y^*|_k+\bar\theta_X(\tau/C)\]
for all $z^*\in Z$ with $\|z^*\|\geq \tau/L$.
\end{lemma}

\begin{proof}
The lemma above is only nontrivial for elements $z^*$ with a moderately small norm, i.e.,   it is enough to show that  there are $C>0$ such that for all $k\in\N$ as above, all  $\tau\in (\delta,1)$,  and all $y^*\in LB_Y$ there is  $Z\in \co^*(Y^*)$  such that 
\[|y^*+z^*|_k\geq 2|y^*|_{k+1}-|y^*|_k+\bar\theta_X(\tau/C)\]
for all  $z^*\in Z$ with $\|z^*\|\in ( \tau/L,s)$; where $s>\tau/L$ is a number large enough depending on $L$ only. For now on, fix such $s$  and pick  some
\begin{equation}\label{EqC}
C>16ML+64 ML^3+32s ML^2.
\end{equation}
For the remainder of the proof, we show that $C$  has the required properties. For that, fix  $k\in\N$ as required,  $\tau\in (\delta,1)$   and $y^*\in LB_Y$. We also  fix $\gamma>0$  and some positive $\beta<\bar\theta_X(\tau/C)$ throughout the proof (this is possible because $X$ is AUS and therefore $X^*$ is $w^*$-AUC). Moreover, since $\bar\theta_X(t)/ t$ is increasing, our choice of $k$ allows us to   assume that $\beta$ also satisfies  $2^k\beta C\tau^{-1}> 2t_0$.

By the definition of the norm $|\cdot|_{k+1}$, we can pick $x,x'\in X$ with $\|x-x'\|\geq 2^{k+1}$ such that 
\[y^*(f(x)-f(x'))\geq (1-\gamma)\|x-x'\||y^*|_{k+1}.\]
In order to simplify notation, notice that, replacing $f$ by $f(\cdot - x_1)+x_2$, for appropriate $x_1,x_2\in X$, we can assume  that $x=-x'$ and $f(x)=-f(x')$.   In particular, $\|x\|\geq 2^k$ and 
\begin{equation}\label{Eq3}
y^*(f(x))=\frac{1}{2}y^*(f(x)-f(x'))\geq (1-\gamma)|y^*|_{k+1}\|x\|.
\end{equation}

Letting  $\sigma=\beta C /\tau$, item $(2)$ of Proposition \ref{PropositionRelationRhoTheta} implies that  $\bar\rho_X(\sigma)\leq \beta$. Therefore, there is a finite codimensional $X_0\subset X$ such that 
\begin{equation}\label{Eq6}\|x+z\|\leq (1+2\beta)\|x\|\ \text{ for all }\ z\in \sigma\|x\|B_{X_0}.
\end{equation}
Replacing $X_0$ by a smaller finite codimensional subspace, we can also assume without loss of generality that 
\begin{equation}\label{Eq9}
\|x+z\|\geq \|x\|\geq 2^k \ \text{ for all } \ z\in \sigma\|x\|  B_{X_0}.
\end{equation}
 
As $\|x\|\geq 2^k$, it follows from our choice of $\beta$ that  $\sigma\|x\|>2t_0$. Therefore, our choice of $t_0$ implies that  there is a compact $K\subset Y$ such that    
\begin{equation}\label{Eq2}
\frac{\sigma \|x\|}{2M}B_Y\subset K+ \frac{\sigma \|x\|}{16M} B_Y+f(\sigma\|x\|B_{X_0}).
\end{equation}
Since $K$ is compact, there is $Z\in \co^*(Y^*)$ such that 
\[z^*(f(x))=0\ \text{ and }\ |z^*(y)|\leq \frac{\sigma \|x\|\|z^*\|}{8M}\]
for all $y\in K$ and all $z^*\in sB_Z$.

We will now proceed to show that     $Z$ chosen above has the desired properties. For that, fix $z^*\in Z$ with $\|z^*\|\in [\tau/L,s)$ and let us estimate $|y^*+z^*|_{k}$ from below. For that, let  $z\in \frac{\sigma \|x\|}{2M} S_Y$ be such that \[z^*(z)\geq  \frac{\sigma \|x\|\|z^*\|}{4M}.\] Then, using \eqref{Eq2} for $-z$, we obtain   $w\in \sigma\|x\| B_{X_0}$ such that 
\begin{equation}\label{Eq5}
z^*(-f(w))\geq \frac{\sigma \|x\|\|z^*\|}{16M}\geq \frac{\beta C\|x\|}{16ML}  .
\end{equation}

Since $x=-x'$, $f(x)=-f(x')$, $\|w-x'\|\geq 2^k$ (this follows from  \eqref{Eq9}), and $\|x'-w\|\le (1+2\beta)\|x\|$ \eqref{Eq6}, we have that 
\[y^*(f(w)+f(x))=y^*(f(w)-f(x'))\leq (1+2\beta)|y^*|_k\|x\|.\]
The inequality above and  \eqref{Eq3} give us that
\begin{equation}
\label{Eq4}
y^*(f(w))\leq \big((1+2\beta)|y^*|_k-(1-\gamma)|y^*|_{k+1}\big)\|x\|.
\end{equation}
Hence, as $z^*(f(x))=0$, it follows from \eqref{Eq3}, \eqref{Eq5}, and \eqref{Eq4}  that
\begin{align*}
(y^* & +z^*)(f(x)-f(w))
\geq  \Big((2-2\gamma)|y^*|_{k+1} -|y^*|_k - 
2\beta|y^*|_k 
+\frac{  \beta C}{16ML}\Big)\|x\|
\end{align*}
Since, by the definition of $|\cdot|_k$, \eqref{Eq6}, and \eqref{Eq9}, we have 
\[ (y^*+z^*)(f(x)-f(w))\leq (1+2\beta) |y^*+z^*|_{k}\|x\|,\] 
and since $\gamma>0$ was arbitrary,  we conclude that 
\begin{align}\label{Eq11}
(1+2\beta) |y^*+z^*|_{k} \geq & 2|y^*|_{k+1} -|y^*|_k 
-2\beta|y^*|_k+\frac{ \beta C}{16ML}.
\end{align}

As $\|y^*\|\leq L$ and $\|z^*\|\leq s$,  we have that   $|y^*|_k\leq L^2$ and $|z^*|_k\leq s L$. Hence,   \eqref{Eq11}  gives us 
\begin{align*}
|y^*+z^*|_{k}\geq  2|y^*|_{k+1} -|y^*|_k 
+ \frac{   C-64 ML^3-32s ML^2}{16ML}\beta.
\end{align*}
By our choice of $C$ (see \eqref{EqC}) and as $\beta$ is any positive number  smaller than $\bar\theta_X(\tau/C)$, the lemma is proven. 
\end{proof}

We now conclude the proof of the theorem. For that, fix $\delta \in (0,1)$ and let $C\geq 1$   be given by Lemma \ref{ClaimInTec}. Since $\bar\theta_X(\tau/C)\leq \tau/C\leq \tau$ for all $\tau\in (0,1)$, we only need to find a renorming of $Y$ such that 
\[\bar\theta_Y(\tau)\geq \bar\theta_X(\tau/C)-\delta\ \text{ for all }\ \tau\in (\delta ,1).\]

Fix $k_0\in\N$ with $2^{k_0}C\delta^{-1}\bar\theta_X(\delta/C)>2t_0$ and   $N\in\N$ with $2L^2<\delta N$.   Define a norm $|\cdot|$ on $Y^*$ by letting 
\[|y^*|=\frac{1}{N}\sum_{k=k_0+1}^{k_0+N}|y^*|_{k}\ \text{ for all }\ y^*\in Y^*.\]
Clearly, $|\cdot|$ is weak$^*$ lower semi-continuous and, by Lemma \ref{ClaimEquivNorm}, it is equivalent to the original norm of  $Y^*$; in fact,
\[\frac{1}{L}\|y^*\|\leq |y^*|\leq L\|y^*\|\]
for all $y\in Y^*$.  Hence, $|\cdot|$  is a dual norm, i.e., there is a norm on $Y$ equivalent to $Y$'s original norm whose dual norm is $|\cdot|$; moreover, this norm is  $L$-equivalent to $Y$'s original norm. By abuse of notation, we also denote this norm on $Y$ by $|\cdot|$.

In order to conclude, let   us estimate $\bar\theta_{(Y,|\cdot|)}(\tau)$ from below. For that, pick $y^*\in Y^*$ with $|y^*|=1$; so $\|y^*\|\leq L$. Let   $Z\in \co^*(Y^*)$ be given by Claim \ref{ClaimInTec} for $\tau$ and $y^*$ so that 
\[|y^*+z^*|_k\geq 2|y^*|_{k+1}-|y^*|_k+\bar\theta_X(\tau/C)\]
for all $k\in \{k_0+1,\ldots,k_0+N\}$ and all $z^*\in Z$ with $\|z^*\|\geq \tau/L$. Hence, adding those inequalities, we obtain that 
\[ |y^*+z^*| \geq  |y^* |-\frac{2}{N}|y^*|_{k_0+1}+\bar\theta_X(\tau/C).\]
 Since $\frac{2}{N}|y^*|_{k_0+1}\leq \frac{2L}{N}\|y^*\|\leq \frac{2L^2}{N}<\delta$, we are done. 
\end{proof}

We point out a quantitative strengthening of Theorem \ref{ThmTheta*Pres} which we will need. Precisely, we notice that our proof of Theorem \ref{ThmTheta*Pres} allows us to change the norm of $X$ by a fixed amount without having to change the constants $L$ and $C$.

\begin{theorem}
\label{ThmTheta*PresEquivNorm}
Let $X$ be an AUS Banach space and let  $Y$ be a Banach space   asymptotically coarse Lipschitz equivalent to $X$. For all $K\geq 1$, there are $L=L(K),\ C=C(K)\geq 1$ satisfying the following: if $|\cdot|_X$ is a norm on $X$ with $|\cdot|_X\sim_K\|\cdot\|_X$ and  $\delta \in (0,1)$, then there is a norm $|\cdot|_Y$ on $Y$   such that $|\cdot|\sim_L\|\cdot\|_Y$ and 
\[\bar\theta_{(Y,|\cdot|_Y)}(\tau)\geq \bar\theta_{(X,|\cdot|_X)}(\tau/C)-\delta\ \text{ for all }\ \tau\in (0,1).\]
\end{theorem}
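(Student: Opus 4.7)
The plan is to run the proof of Theorem \ref{ThmTheta*Pres} essentially verbatim, but with $(X,|\cdot|_X)$ in place of $(X,\|\cdot\|_X)$, and to verify that every numerical constant appearing in that proof can be bounded purely in terms of $K$ and the fixed data $(X,\|\cdot\|_X,Y,\|\cdot\|_Y,f,g)$. In other words, one observes that the constants $L$ and $C$ produced by the proof of Theorem \ref{ThmTheta*Pres} depend only on: (a) a coarse Lipschitz constant shared by $f$ and $g$ when viewed as maps between the underlying normed spaces, (b) the rate at which $g\circ f\sim_\infty \mathrm{Id}_X$ and $f\circ g\sim_\infty\mathrm{Id}_Y$, and (c) the Gorelik constant $M$ together with the threshold $t_0$ furnished by Theorem \ref{ThmGorelik}. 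If one can control all three of these quantities uniformly over the set of norms $|\cdot|_X$ on $X$ with $|\cdot|_X\sim_K\|\cdot\|_X$, the result follows.

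First I would verify (a): if $f$ is coarse Lipschitz with constant $L_0$ between $(X,\|\cdot\|_X)$ and $Y$, then $f$ is coarse Lipschitz between $(X,|\cdot|_X)$ and $Y$ with constant at most $L_0(K+1)$, since $|x-x'|_X\ge 1$ forces $\|x-x'\|_X\le K|x-x'|_X$. A symmetric estimate handles $g:Y\to(X,|\cdot|_X)$, giving a constant at most $KL_0(K+1)$. Thus the coarse Lipschitz constants depend only on $K$ and $L_0$. For (b): the condition $\|g(f(x))-x\|_X\le\eta\|x\|_X$ for $\|x\|_X\ge \eta^{-1}\cdot s$ translates to $|g(f(x))-x|_X\le K^2\eta|x|_X$ whenever $|x|_X\ge K\eta^{-1}s$, and analogously for $f\circ g$. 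Hence the asymptotic closeness rate is controlled by $K$ and the original rate. For (c): Remark \ref{RemarkConstantGorelik} tells us precisely that changing the norm on $X$ to a $K$-equivalent one replaces the Gorelik constant $M$ of Theorem \ref{ThmGorelik} by $12LK$, where $L$ is the new coarse Lipschitz constant obtained in (a); in particular the new Gorelik constant is bounded by a function of $K$ and $L_0$ alone.

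With these three uniform bounds in hand, I would now rerun the proof of Theorem \ref{ThmTheta*Pres} with $(X,|\cdot|_X)$ playing the role of $X$. The construction of the norms $|\cdot|_k$ on $Y^*$ uses distances $\|x-x'\|_X$ in the denominator and the threshold $\|x-x'\|_X\ge 2^k$ in the supremum; I would simply replace both by $|x-x'|_X$. Lemma \ref{ClaimEquivNorm} then yields $\frac{1}{L}\|y^*\|\le |y^*|_k\le L\|y^*\|$ with $L$ now depending only on $K$ and $L_0$. Lemma \ref{ClaimInTec}, whose conclusion drives everything, goes through with a new $C=C(K)$: the choice of $C$ in inequality \eqref{EqC} involves only $M,L,s$, and all three are now bounded in terms of $K$ and the original data. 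The final averaging over $k\in\{k_0+1,\dots,k_0+N\}$ produces a norm $|\cdot|_Y$ on $Y$ that is $L(K)$-equivalent to $\|\cdot\|_Y$ and satisfies
\[
\bar\theta_{(Y,|\cdot|_Y)}(\tau)\ge \bar\theta_{(X,|\cdot|_X)}(\tau/C)-\delta
\]
for all $\tau\in(0,1)$, which is what was to be proved.

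The main obstacle, as so often in quantitative renorming arguments, is the bookkeeping: one must verify that none of the constants $L,M,t_0,s,C,N,k_0$ arising in the proof of Theorem \ref{ThmTheta*Pres} secretly depends on the specific norm $|\cdot|_X$ beyond the factor $K$. Once the three uniform bounds in the previous paragraph are established, there is no further geometric input required; the rest is a routine (if tedious) substitution. A minor point worth checking carefully is that the condition $2^{k_0}C\delta^{-1}\bar\theta_X(\delta/C)>2t_0$ used to activate Lemma \ref{ClaimInTec} remains satisfiable by an appropriate choice of $k_0$, given that $\bar\theta_{(X,|\cdot|_X)}$ and $\bar\theta_{(X,\|\cdot\|_X)}$ are comparable in a $K$-dependent way.
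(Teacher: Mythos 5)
Your proposal is correct and is essentially the paper's own argument: the paper proves this theorem precisely by re-examining the proof of Theorem \ref{ThmTheta*Pres}, observing that $L$ depends only on the affine upper bounds for $f$ and $g$ (so it may be replaced by $LK$ after the renorming of $X$), that $C$ is any constant satisfying \eqref{EqC} and hence depends only on $L$, $s$ and the Gorelik constant $M$, and invoking Remark \ref{RemarkConstantGorelik} to control $M$ for the renormed space — exactly your items (a) and (c), with your item (b) only affecting $t_0$ and hence $k_0$, which may depend on the norm. One small caution: your closing justification that $\bar\theta_{(X,|\cdot|_X)}$ and $\bar\theta_{(X,\|\cdot\|_X)}$ are comparable in a $K$-dependent way is not available in general (AUS, hence w$^*$-AUC of the dual norm, is not an isomorphic property), but it is also not needed, since the desired inequality is trivial at those $\tau$ where $\bar\theta_{(X,|\cdot|_X)}(\tau/C)\leq\delta$, and where it exceeds $\delta$ one has $\bar\theta_{(X,|\cdot|_X)}(\tau/C)/\tau>\delta$, so any $k_0$ with $2^{k_0}C\delta>2t_0$ activates Lemma \ref{ClaimInTec}.
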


\begin{proof}
 Let $f:X\to Y$ be an asymptotic coarse Lipschitz equivalence with asymptotic coarse Lipschtiz inverse $g:Y\to X$. Examining the proof of Theorem \ref{ThmTheta*Pres}, we see that  $L$  only depends on the affine upper bounds for the asymptotic coarse Lipschitz equivalences $f$ and $g$. Therefore, if we renorm $X$ with a $K$-equivalent norm, we can replace $L$ with $LK$ in the proof of Theorem \ref{ThmTheta*Pres} and the result will still hold.

As for $C$, this constant is taken in the proof of Theorem \ref{ThmTheta*Pres} to be any which satisfies \eqref{EqC}. Notice that $s$ depends on $L$ only, so $C$ depends only on $L$ and $M$, where $M$ is given by Theorem \ref{ThmGorelik}. The result then  follows from    Remark \ref{RemarkConstantGorelik}.
\end{proof}

We now translate this result in terms of the modulus of asymptotic smoothness.

\begin{theorem}
\label{ThmRhoPresEquivNorm}
Let $X$ be an AUS Banach space and let  $Y$ be a Banach space   asymptotically coarse Lipschitz equivalent to $X$. For all $K\geq 1$, there are $L=L(K),\ C=C(K)\geq 1$ satisfying the following: if $|\cdot|_X$ is a norm on $X$ with $|\cdot|_X\sim_K\|\cdot\|_X$ and  $\delta \in (0,1)$, then there is a norm $|\cdot|_Y$ on $Y$   such that $|\cdot|\sim_L\|\cdot\|_Y$ and 
\[\bar\rho_{(Y,|\cdot|_Y)}(\tau/C)\leq \bar\rho_{(X,|\cdot|_X)}(\tau)+\delta\ \text{ for all }\ \tau\in (0,1).\]
\end{theorem}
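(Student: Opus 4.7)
The plan is to derive Theorem~\ref{ThmRhoPresEquivNorm} from Theorem~\ref{ThmTheta*PresEquivNorm} purely by Young duality, using Proposition~\ref{PropositionPhoThetaEquiv} to convert statements about $\bar\theta$ into statements about $\bar\rho$ with only a controlled loss in the constants.

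First, I apply Theorem~\ref{ThmTheta*PresEquivNorm} with the given $K$, $|\cdot|_X$, and $\delta$ to obtain constants $L_0=L_0(K)$, $C_0=C_0(K)$ and a norm $|\cdot|_Y$ on $Y$ with $|\cdot|_Y\sim_{L_0}\|\cdot\|_Y$ such that
\[
\bar\theta_{(Y,|\cdot|_Y)}(\tau)\ \ge\ \bar\theta_{(X,|\cdot|_X)}(\tau/C_0)-\delta\quad\text{for all }\tau\in(0,1).
\]
This $|\cdot|_Y$ is precisely the norm I will exhibit in the conclusion; all that remains is to show it delivers the desired inequality for $\bar\rho$.

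Next, I pass to Young duals. Writing $f(s)=\bar\theta_{(X,|\cdot|_X)}(s/C_0)-\delta$ and $g(s)=\bar\theta_{(Y,|\cdot|_Y)}(s)$, the displayed inequality is $f\le g$, hence $g^*\le f^*$. A change of variable $u=s/C_0$ inside the supremum defining $f^*$ yields, for every $t\in[0,1/C_0]$,
\[
(\bar\theta_{(Y,|\cdot|_Y)})^*(t)\ \le\ (\bar\theta_{(X,|\cdot|_X)})^*(C_0 t)+\delta.
\]
The only bookkeeping needed is the restriction $t\le 1/C_0$, which comes from the fact that the new integration variable ranges over $[0,1/C_0]\subset[0,1]$ rather than all of $[0,1]$; since we will eventually evaluate at points much smaller than $1/C_0$, this causes no trouble.

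Finally, I invoke Proposition~\ref{PropositionPhoThetaEquiv} at both ends: $\bar\rho_{(Y,|\cdot|_Y)}(t/2)\le(\bar\theta_{(Y,|\cdot|_Y)})^*(t)$ on the $Y$ side, and $(\bar\theta_{(X,|\cdot|_X)})^*(u)\le\bar\rho_{(X,|\cdot|_X)}(6u)$ on the $X$ side (the latter being the reformulation, valid for $u\in[0,1/6]$, of the bound $(\bar\theta_X)^*(s/6)\le\bar\rho_X(s)$). Chaining these with the dual inequality above, I obtain
\[
\bar\rho_{(Y,|\cdot|_Y)}(t/2)\ \le\ \bar\rho_{(X,|\cdot|_X)}(6 C_0 t)+\delta
\]
for every $t\in[0,1/(6C_0)]$. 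Substituting $\tau=6C_0 t$, so that $t/2=\tau/(12C_0)$, rewrites this as
\[
\bar\rho_{(Y,|\cdot|_Y)}\!\bigl(\tau/(12C_0)\bigr)\ \le\ \bar\rho_{(X,|\cdot|_X)}(\tau)+\delta\quad\text{for all }\tau\in(0,1),
\]
which is exactly the conclusion with $L=L_0$ and $C=12\,C_0$. I expect no genuine obstacle here: the whole argument is a Legendre-transform computation layered on top of Theorem~\ref{ThmTheta*PresEquivNorm}, and the only care needed is to track the admissible ranges through the two changes of variable.
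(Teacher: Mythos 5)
Your argument is correct and is essentially the paper's own proof: the paper likewise deduces the statement from Theorem~\ref{ThmTheta*PresEquivNorm} by the order-reversing property of Young duality (the fact that $\varphi\ge\psi(\cdot/D)-\delta$ implies $\varphi^*(\cdot/D)\le\psi^*+\delta$) combined with Proposition~\ref{PropositionPhoThetaEquiv} on both sides. You simply carry out the bookkeeping the paper leaves implicit, ending with the admissible choice $L=L_0$, $C=12C_0$.
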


\begin{proof} Let $\varphi,\psi$ be continuous monotone non decreasing on $[0,1]$ with $\varphi(0)=\psi(0)=0$. If there exists $D\ge 1$ and $\delta >0$ such that for all $\tau\in (0,1)$, $\varphi(\tau)\ge \psi(\tau/D)-\delta$, it is easy to  show that for all $\tau\in (0,1)$, $\varphi^*(\tau/D)\le \psi^*(\tau)+\delta$. So the conclusion follows clearly from Theorem \ref{ThmTheta*PresEquivNorm} and Proposition    \ref{PropositionPhoThetaEquiv}.
\end{proof}

\subsection{Preservation of asymptotic structures}

We are almost ready to prove our results on the preservation of asymptotic structures. For this we will exploit the following two renorming characterizations of the classes. 

\begin{theorem}\emph{(}\cite[Corollary 4.15]{Causey2018ThreeAndHalf}\emph{).} Let $X$ be a Banach space and $p\in (1,\infty]$. Then $X$ has property $\textsf{\emph{N}}_p$ if and only if there exist a constant $L\ge 1$ and a constant $C>0$, such that for any $\tau \in (0,1]$ there exists a norm $|\cdot |$ on $X$ such that $|\cdot|\sim_L \|\cdot\|_X$  and
\begin{enumerate}[(a)]
\item if $1<p<\infty$, $\overline{\rho}_{|\cdot |}(\tau)\leqslant C \tau^p$ 
\item if $p=\infty$, $\overline{\rho}_{|\cdot |}(C) \leqslant \tau$. 
\end{enumerate}  \label{TheoremCauseyNp}
\end{theorem}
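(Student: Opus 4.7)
The plan is to prove the biconditional in two directions, with careful attention to how the renorming depends on the scale $\tau$. For the direction that the family of renormings implies property $\textsf{N}_p$: I would fix $k \in \N$ and a weakly null tree $(x_{\bar n})_{\bar n \in \cD^{\le k}}$ in $B_X$, and choose $\tau = k^{-1/p}$ (for $p < \infty$; for $p = \infty$ set $\tau = 1/k$). Applying the hypothesis produces a norm $|\cdot|_\tau$ that is $L$-equivalent to $\|\cdot\|_X$ with $\bar\rho_{|\cdot|_\tau}(\tau) \le C\tau^p$. Working inside $(X,|\cdot|_\tau)$, I would build a branch greedily: at stage $j$, use the definition of $\bar\rho_{|\cdot|_\tau}$ applied to the normalized current partial sum $s_{j-1} = \sum_{i<j} x_{(m_1,\ldots,m_i)}$ to extract a finite-codimensional subspace on which adding the next term increases the norm by at most a factor $1 + \bar\rho_{|\cdot|_\tau}(L/|s_{j-1}|_\tau)$, and use the weak-nullity of the $j$-th level of the tree to choose $m_j$ from (arbitrarily close to) that subspace. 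Iterating the estimate $|s_j|_\tau^p \le |s_{j-1}|_\tau^p + pL^p C$ (the standard induction in $p$-AUS spaces) yields $|s_k|_\tau \le C' k^{1/p}$, and converting back to $\|\cdot\|_X$ costs one more factor of $L$.

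For the converse, the plan is to fix $\tau \in (0,1]$, set $k = \lceil \tau^{-p}\rceil$, and use property $\textsf{N}_p$ to construct an explicit renorming. A natural candidate is to define the $|\cdot|_\tau$-unit ball on $X$ as the closed convex hull of suitably scaled branch partial sums coming from weakly null trees of height $k$, or dually to declare the $|\cdot|_\tau$-unit ball on $X^*$ to be the closed convex hull of certain weak$^*$-null branch sums. Property $\textsf{N}_p$ with constant $c$ bounds the new unit ball above and below in terms of the original, yielding $L$-equivalence with $L$ depending only on $c$. A direct computation then shows that in this renormed $X$, a small weakly null perturbation can be absorbed into the tree structure at cost at most $C\tau^p$, giving $\bar\rho_{|\cdot|_\tau}(\tau) \le C\tau^p$. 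An alternative route is to dualize and work with the weak$^*$-AUC modulus $\bar\theta$ on $X^*$, then transfer the estimate to $\bar\rho$ via the Young duality in Proposition \ref{PropositionPhoThetaEquiv}.

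The main obstacle lies in the converse direction. The $\textsf{N}_p$ tree condition is purely combinatorial and operative only at a single discrete scale $k$, while $\bar\rho$ is an analytic quantity requiring uniform control across all directions on the unit sphere. The key difficulty is packaging the tree data into a convex-geometric renorming whose modulus can actually be read off; this demands a careful duality between weakly null trees in $B_X$ and weak$^*$-null trees in $B_{X^*}$, and the constants $L,C$ must remain independent of $\tau$. Crucially, the renorming $|\cdot|_\tau$ genuinely depends on $\tau$: a single renorming working for all $\tau$ simultaneously would upgrade $\textsf{N}_p$ to the strictly stronger property $\textsf{T}_p$, which is forbidden by the strict inclusion $\textsf{T}_p \subsetneq \textsf{N}_p$ in Theorem \ref{inclusions}.
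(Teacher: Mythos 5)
This statement is not proved in the paper at all: it is imported verbatim from Causey (\cite[Corollary 4.15]{Causey2018ThreeAndHalf}) and used as a black box in the proof of Theorem \ref{precise stability}, so there is no internal argument to compare yours against --- what you are proposing is to reprove Causey's renorming characterization of $\textsf{N}_p$. Judged on its own terms, your first direction (the $\tau$-dependent renormings imply $\textsf{N}_p$) is right in outline but contains a real, though fixable, slip: the iteration you invoke, $|s_j|_\tau^p \le |s_{j-1}|_\tau^p + pL^pC$, is the standard estimate for a norm satisfying $\bar\rho(t)\le Ct^p$ for \emph{all} $t$, whereas the hypothesis controls $\bar\rho_{|\cdot|_\tau}$ only at the single scale $t=\tau$. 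The correct patch is to split into cases: as long as $|s_{j-1}|_\tau\le L/\tau = Lk^{1/p}$ use the triangle inequality, and once $|s_{j-1}|_\tau> L/\tau$ use monotonicity of $\bar\rho_{|\cdot|_\tau}$ to get $|s_j|_\tau\le |s_{j-1}|_\tau\bigl(1+\bar\rho_{|\cdot|_\tau}(\tau)\bigr)\le |s_{j-1}|_\tau(1+C/k)$, which after $k$ steps gives $|s_k|_\tau\le e^{C}(Lk^{1/p}+L)$, hence the desired $c\,k^{1/p}$ bound after paying the factor $L$ to return to $\|\cdot\|_X$.

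The converse direction, which you yourself flag as the main obstacle, is not a proof but a restatement of the problem. Declaring the unit ball to be ``the closed convex hull of suitably scaled branch partial sums'' and asserting that ``a direct computation then shows'' the perturbation can be absorbed at cost $C\tau^p$ skips exactly the content of the theorem: nothing in your outline produces, for a given unit vector of the new norm, the finite-codimensional subspace witnessing $\bar\rho_{|\cdot|_\tau}(\tau)\le C\tau^p$ with constants independent of $\tau$, and the case $p=\infty$ (where the required estimate has the different shape $\bar\rho_{|\cdot|}(C)\le\tau$) is not addressed at all. Causey's actual argument does not build the norm from trees in $X$; it runs through dual machinery (weak$^*$-null trees and Szlenk-type derivations of $B_{X^*}$), which is the route your ``alternative'' sentence gestures at via Proposition \ref{PropositionPhoThetaEquiv} but does not carry out. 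Your closing observation is correct and worth keeping: the norm must genuinely depend on $\tau$, since a single norm working for all $\tau$ would make $X$ $p$-AUS, i.e.\ give $\textsf{T}_p$, contradicting the strict inclusion $\textsf{T}_p\subsetneq\textsf{N}_p$ of Theorem \ref{inclusions}.
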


\begin{theorem}\emph{(}\cite[Theorem A]{CauseyFovelleLancien2021}\emph{).} Let $X$ be a Banach space and $p\in (1,\infty)$. Then $X$ has property $\textsf{\emph{A}}_p$ if and only if there exist a constant $L\ge 1$ and a constant $C>0$, such that for any $\tau \in (0,1]$ there exists a norm $|\cdot |$ on $X$ such that $|\cdot|\sim_L \|\cdot\|_X$  and
$$\forall t\ge \tau,\ \ \overline{\rho}_{|\cdot |}(t)\le Ct^p.$$\label{TheoremCFLAp}
\end{theorem}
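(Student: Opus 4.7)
The theorem provides a renorming characterization of $\textsf{A}_p$ that parallels Causey's characterization of $\textsf{T}_p$ via $p$-AUS renormings (Theorem \ref{TheoremCauseyNp}): instead of a single uniform $p$-AUS renorming, property $\textsf{A}_p$ yields a family of renormings indexed by $\tau$, each of which satisfies the $p$-AUS-type modulus estimate only for step sizes $t \ge \tau$. My strategy is to prove the two directions separately.

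For the easy direction (renorming $\Rightarrow \textsf{A}_p$), fix $k\in\N$ and a weakly null tree $(x_{\bar n})_{\bar n\in\cD^{\le k}}$ in $B_X$, together with $a=(a_1,\ldots,a_k)\in S_{\ell_p^k}$. I choose $\tau_k$ small enough that $Lk\tau_k\le 1$ (say $\tau_k=(2Lk)^{-1}$) and work in the norm $|\cdot|_{\tau_k}$; the tree lies in $L B_{|\cdot|_{\tau_k}}$. Run the inductive branch selection familiar from the $\textsf{T}_p$-via-$p$-AUS argument: having selected $m_1,\ldots,m_{j-1}$ with partial sum $s_{j-1}$, use the weakly null hypothesis together with the infimum over finite-codimensional subspaces in the definition of $\bar\rho_{|\cdot|_{\tau_k}}$ to choose $m_j$. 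Split into two cases based on the ratio $r_j=|a_j|/|s_{j-1}|_{\tau_k}$: when $r_j\ge\tau_k$, apply $\bar\rho_{|\cdot|_{\tau_k}}(r_j)\le Cr_j^p$ to obtain $|s_j|_{\tau_k}^p\le|s_{j-1}|_{\tau_k}^p+C'|a_j|^p$; when $r_j<\tau_k$, simply bound $|s_j|_{\tau_k}\le(1+L\tau_k)|s_{j-1}|_{\tau_k}$. Telescoping and using $(1+L\tau_k)^k\le e$ yields $|s_k|_{\tau_k}\le c\|a\|_p$ for a constant $c$ independent of $k$, and the $L$-equivalence with $\|\cdot\|_X$ transfers this bound to the original norm, witnessing $\textsf{A}_p$.

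For the harder direction ($\textsf{A}_p\Rightarrow$ renorming), given $\tau\in(0,1]$ I select a depth $k_\tau$ of order $\tau^{-p}$ and construct $|\cdot|_\tau$ on the dual $X^*$ as a weak$^*$-lower-semicontinuous James-style norm defined by a supremum over weak$^*$-null families at depth $k_\tau$ with $\ell_q$ coefficients ($1/p+1/q=1$), then take the predual norm on $X$. The uniform $L$-equivalence (with $L$ depending only on the $\textsf{A}_p$-constant $c$ from Definition \ref{DefinitionAp}) is immediate from the fact that the $\textsf{A}_p$ estimate is uniform in $k$, so the depth-$k_\tau$ truncated supremum is bounded independently of $\tau$. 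To establish the modulus estimate $\bar\rho_{|\cdot|_\tau}(t)\le Ct^p$ for $t\ge\tau$, I pass to the dual side via Proposition \ref{PropositionPhoThetaEquiv}: bound $\bar\theta_{(X^*,|\cdot|_\tau^*)}(t)$ from below using the tree construction, then Young-dualize to obtain the desired estimate on $\bar\rho_{|\cdot|_\tau}$.

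The main obstacle is this $(\Rightarrow)$ direction, specifically matching the scalings of $\tau$, $t$, and $k_\tau$ so that the Young duality produces the bound $Ct^p$ on the whole interval $[\tau,1]$ while the norm remains $L$-equivalent to $\|\cdot\|_X$ with $L$ independent of $\tau$. The depth choice $k_\tau\asymp\tau^{-p}$ is what allows the tree construction to \emph{see} the modulus down to $t=\tau$, and the restriction $t\ge\tau$ in the conclusion is intrinsic — for smaller $t$ a depth-$k_\tau$ tree cannot detect the $p$-AUS-type behavior, since the relevant product of scale and depth drops below $1$. Carrying out this balancing act cleanly, so that the constants $L$ and $C$ depend only on the $\textsf{A}_p$-constant $c$, is the technical heart of the argument.
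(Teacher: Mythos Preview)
The paper does not contain a proof of this theorem; it is quoted from \cite[Theorem A]{CauseyFovelleLancien2021} and used as a black box in the proof of Theorem~\ref{precise stability}. There is therefore nothing in the present paper to compare your proposal against.

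That said, your sketch of the direction (renormings $\Rightarrow\textsf{A}_p$) has a genuine gap. You fix $a\in S_{\ell_p^k}$ \emph{before} selecting the branch, and your inductive choice of $m_j$ is made relative to the partial sum $s_{j-1}=\sum_{i<j}a_ix_{(m_1,\ldots,m_i)}$, which depends on $a$. But the definition of $\textsf{A}_p$ demands a single branch $\bar m$ for which the bound holds for \emph{all} $a\in\ell_p^k$ simultaneously; what you have written yields only the weaker quantifier order ``for every $a$ there exists $\bar m$''. The repair is standard but needs to be said: at stage $j$ the previously chosen vectors span a finite-dimensional subspace $F_{j-1}$, and by compactness of $S_{F_{j-1}}$ one can find a single finite-codimensional $X_0$ witnessing the modulus estimate (up to a small error) uniformly over all normalized $s\in F_{j-1}$; then choose $m_j$ so that $x_{(m_1,\ldots,m_j)}$ lies approximately in $X_0$. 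With the branch thus fixed independently of $a$, your case split on $r_j$ becomes a pure estimate carried out \emph{after} the selection, and the telescoping argument goes through.

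Your outline for ($\textsf{A}_p\Rightarrow$ renormings) is reasonable in spirit --- a dual tree-type norm at depth $k_\tau\asymp\tau^{-p}$ is a natural construction --- but it remains a sketch; the delicate part, as you yourself acknowledge, is controlling the constants $L$ and $C$ uniformly in $\tau$, and nothing in the proposal actually carries this out.
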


We can now give the proof of Theorem \ref{precise stability}, which will follow from Theorem \ref{ThmRhoPresEquivNorm} and the above renorming characterizations of $\textsf{{A}}_p$ and $\textsf{{N}}_p$, for $p\in (1,\infty)$ and of $\textsf{{A}}_\infty=\textsf{{N}}_\infty$.

\begin{proof}[Proof of Theorem \ref{precise stability}] We only detail the case of $\textsf{{A}}_p$ for $p\in (1,\infty)$. So assume that $X$ has $\textsf{{A}}_p$ and that $Y$ is asymptotically coarse Lipschitz equivalent to $X$. As $X$ has property $\textsf{{A}}_p$, let $L '\geq 1$ and  $C'>0$ be given by Theorem \ref{TheoremCFLAp}, i.e.,   for all $\tau\in (0,1]$, we can find an $L'$-equivalent   renorming $|\cdot|_X$ of  $X$ so that  \begin{equation}\label{Eq.Ap.2}
\bar \rho_{(X,|\cdot|_X)}(t)\leq C't^p\ \text{ for all }\ t\geq \tau.
\end{equation}
Let $L=L(L'),C=C(L')\geq 1$ be given by Theorem \ref{ThmRhoPresEquivNorm}, i.e., if $|\cdot|_X$ is an $L'$-equivalent renorming of $X$ and  $\delta>0$, then  there is a norm  $|\cdot|_\delta$ on $Y$ such that  $|\cdot|_\delta\sim_L \|\cdot\|_Y$
and 
\begin{equation}\label{Eq.Ap.1.Young.Func}
\bar\rho_{(Y,|\cdot|_\delta)}(t/C)\leq \bar\rho_{(X,|\cdot|_X)}(t) +\delta\ \text{ for all }\ t\in (0,1).
\end{equation}
We now show that $Y$ satisfies the condition in Theorem \ref{TheoremCFLAp} with constants $L$ and $(C'+1)(12C)^p$.  For that, fix  $\tau\in (0,1]$. Let $|\cdot|_X$ be an $L'$-equivalent  norm on $X$ satisfying \eqref{Eq.Ap.2} and $|\cdot|_\delta$ a norm on $Y$ satisfying \eqref{Eq.Ap.1.Young.Func}. Then, 

\begin{align*}
\bar \rho_{(Y,|\cdot|_\delta)}\Big(\frac{t}{C}\Big) \le C't^p+\delta  
\end{align*}
for all $\delta>0$ and all $t\in [\tau,1]$. Therefore, choosing $\delta\in (0,\tau^p)$, we get that 
\[\bar \rho_{(Y,|\cdot|_\delta)}(t)\leq 2C'C^pt^p,\]
for all $t\ge \tau C$. As $\tau>0$ was arbitrary, we obtain that $Y$ satisfies the condition in Theorem \ref{TheoremCFLAp}, i.e., $Y$ has property $\textsf{{A}}_p$.

The case of $\textsf{{N}}_p$  follows analogously   with Theorem \ref{TheoremCauseyNp} replacing Theorem \ref{TheoremCFLAp} above.
\end{proof}

\begin{proof}[Proof of Theorem \ref{ThmAUSPreservationINTRO}]
    This follows immediately from   Theorems \ref{inclusions} and \ref{precise stability}. Indeed, suppose $p\in (1,\infty)$ and let $X$ be a $p$-AUS Banach space. Then, by Theorem \ref{inclusions} $X$ is in $\textsf{{A}}_p$. Then, if $Y$ is asymptotically coarse Lipschitz equivalent to $X$, Theorem \ref{precise stability} implies that $Y$ is also in $\textsf{{A}}_p$. Applying Theorem \ref{inclusions} once again, we conclude that $Y$ is $p'$-AUSable for all $p'\in (1,p)$.
\end{proof}

\end{document}